\numberwithin{equation}{section} 
\newcommand{\MLrevision}{\mpar{{\color{red} CHANGED }}}
\begin{document}

\title{Modular curvature and Morita equivalence} 

\author{Matthias Lesch}
\address{Mathematisches Institut,
Universit\"at Bonn,
Endenicher Allee 60,
53115 Bonn,
Germany}

\email{ml@matthiaslesch.de, lesch@math.uni-bonn.de}
\urladdr{www.matthiaslesch.de, www.math.uni-bonn.de/people/lesch}
\thanks{Both authors were partially supported by the 
        Hausdorff Center for Mathematics, Bonn}

\author{Henri Moscovici}
\address{Department of Mathematics,
The Ohio State University,
Columbus, OH 43210,
USA}
\email{henri@math.ohio-state.edu}

\thanks{The work of H. M. was partially
 supported by the US National Science Foundation
    award no. DMS-1300548} 
\subjclass[2010]{Primary 46L87, 58B34, 81R60; Secondary 47G30, 58J35, 16D90}



\keywords{Noncommutative two tori, modular curvature, Morita equivalence, heat expansion,
pseudodifferential calculus, Heisenberg module, imprimitivity bimodule}
\date{\today}

\begin{abstract} 
The curvature of the noncommutative torus $T^2_\theta$ 
 ($\theta \in \R \setminus \Q$) endowed with a noncommutative conformal metric
 has been the focus of attention of several recent works. 
 Continuing the approach taken in the paper
[A. Connes and H. Moscovici, Modular curvature for 
 noncommutative two-tori, J. Amer. Math. Soc. 27 (2014), 639--684] 
 we extend the study of the curvature to twisted
 Dirac spectral triples constructed out of
 Heisenberg bimodules that implement the Morita equivalence of the $C^*$-algebra
   $A_\theta = C(T^2_\theta)$ with other toric algebras $A_{\theta'}=C(T^2_{\theta'})$. 
  In the enlarged context the conformal metric on $T^2_\theta$ is exchanged with
  an arbitrary Hermitian metric on the Heisenberg   $(A_\theta, A_{\theta'})$-bimodule 
   $E'$ for which $\End_{A_{\gt'}}(E') = A_\theta $. 
 We prove that the Ray-Singer log-determinant of the 
 corresponding Laplacian, viewed as a functional on the space of all Hermitian metrics
 on $E'$, attains its extremum at the unique Hermitian metric whose corresponding
 connection has constant curvature. The gradient of the log-determinant functional
 gives rise to a noncommutative analogue of the Gaussian curvature. 
 The genuinely new outcome of this paper is that the latter is shown to be
 independent of any Heisenberg bimodule 
 $E'$ such that $A_\theta = \End_{A_{\gt'}}(E')$,
 and in this sense it is Morita invariant. To prove the above results we extend 
 Connes' pseudodifferential calculus to Heisenberg modules. The twisted version,
 which offers more flexibility even in the case of trivial coefficients, could potentially be applied
to other problems in the elliptic theory on noncommutative tori.
 A noteworthy technical feature is that we systematize the computation of the resolvent
 expansion for elliptic differential operators on noncommutative tori to an extent
 which makes the (previously employed) computer assistance unnecessary.
\end{abstract}
 
 \maketitle


\section*{Introduction}

The concept of intrinsic curvature, which lies at the very core of geometry,
has only recently begun to be comprehended in the noncommutative framework.
As its earliest form, the Gaussian curvature, arose for Riemann surfaces, it
was natural to look first at the noncommutative $2$-torus  $T^2_\gt$ 
($\gt \in \R \setminus \Q$), the simplest but nevertheless revealing example of a
noncommutative surface.  Tools for attacking this problem were developed early
on, in Connes' seminal {\em Comptes Rendus} note~\cite{Con:CAG}. They were
applied in~\cite{CohCon1992, ConTre2011} to the computation of the value at
$0$ of the zeta function of the Laplacian associated to a
translation-invariant conformal metric on $T^2_\gt$, or equivalently to the
computation of the total curvature for such a metric, verifying the validity
of the Gauss-Bonnet formula.
The calculation of the full, not just the total, curvature was completed
in~\cite{ConMos2011} and also in~\cite{FarKha2013}, with the partial aid of
different computer algebra systems. The resulting
formula involves two kinds of functions of the modular operator associated to the conformal
factor. One of them is the Bernoulli generating function in the modular operator, applied to the
Laplacian of the conformal factor. 
The other term was given in~\cite{ConMos2011} a conceptual explanation,
as a consequence of expressing the curvature as the gradient of the Ray-Singer
log-determinant of the varying Laplacian. 
It was also shown in~\cite{ConMos2011} that the Ray-Singer
functional attains its extreme value only at the flat metric.

In this paper  we extend the study of the curvature to twisted
 Dirac spectral triples constructed out of                                 
 Heisenberg bimodules that implement the Morita equivalence of the $C^*$-algebra
 $A_\theta = C(T^2_\theta)$ with other toric algebras. The enlarged context sets the
scene for exploiting the Morita equivalence, which is shown to play a triple role. 
First of all, it exchanges
the special (conformal) metric on the base with a completely general metric on the bundle. 
Secondly, it confers to the Ray-Singer functional a status akin to the Connes-Rieffel  
noncommutative Yang-Mills functional. Thirdly, and most 
surprisingly, it leads to a noncommutative analogue of the Gaussian curvature
which is Morita invariant. 

To convey the flavor of our main results in an informal yet suggestive manner,
we shall appeal to the analogy between the basic Heisenberg modules over 
$A_\gt = C(T^2_\gt)$ and the $\operatorname{Spin}^c$ structures of an elliptic
curve.  The Heisenberg equivalence bimodules are finitely generated projective
modules that implement the Morita equivalences between $T_\gt^2$ and other
tori $T^2_{\gt'}$, with ${\gt'}$ necessarily in the
orbit of $\gt$ under the action of $\PSLtZ$ on the real
projective line (\cf~\cite{Con:CAG, Rie1981}). 
Introduced in~\cite{Con:CAG}, these modules have an attractive
geometric underpinning which we quickly recall (\cf~\cite{Con1982}).  
The free homotopy classes of closed geodesics on the flat torus $T^2 = \R^2/\Z^2$ are
parametrized by the rational projective line $P^1(\Q) \equiv \Q \cup
\{\frac{1}{0}\}$. A pair of relatively prime integers $(d, c) \in \Z^2$
determines a family of lines of slope $\frac{d}{c}$, which project onto simple
closed geodesics in the same free homotopy class. Let $N_{c, d}$ denote the
primitive closed geodesic of slope $\frac{d}{c}$ passing through the base
point of $T^2$. Consider now the Kronecker foliation $\cF_\gt$ of irrational
slope $\gt \in (0,1)$. Each geodesic $N_{c, d}$ gives a complete transversal to the
foliation $\cF_\gt$, equipped with a holonomy pseudogroup.
Choosing $a, b \in \Z$ such that $ad-bc=1$, the
convolution algebra of the corresponding \'etale holonomy groupoid can
be identified with the crossed product algebra $ C(\R/\Z) \rtimes_{\gt'}\Z$, where $1 \in \Z$
acts by the rotation of angle $\gt' =\frac{a\gt +b}{c \gt +d}$,
 which is isomorphic to $A_{\gt'}$;
in particular, $A_\gt$ corresponds to $N_{0, 1}$.  
The holonomy groupoids associated to the
transversals $N_{c, d}$ and $N_{0, 1}$ are Morita equivalent, and the
$(A_{\gt'}, A_\gt)$-bimodule $ E(g,\gt)$ implementing the Morita equivalence
between their $C^*$--algebras has a compelling geometric description (\cite{Con:CAG},
also \S \ref{SSSBasicHeisenberg} below); in particular its smooth version $
\cE(g,\gt)$ carries a canonical Hermitian connection of constant  curvature. 

Fixing a complex structure on $T^2$ with modular parameter 
$\tau\in \C$, $\Im \tau > 0$, gives rise to a spectral triple over $A_\gt$
with operator $D$ isospectral to the operator $\partial_\tau +
\partial_\tau^*$ on $T^2$.  We regard $D$ as the analogue of the Dirac
operator associated to a fundamental $\operatorname{Spin}^c$-structure, and
the collection of Heisenberg bimodules $\operatorname{Mor}(T^2_\gt): =
\{E(g,\gt) ; \, g \in \PSLtZ \}$ as the counterpart of the
set of $\operatorname{Spin}^c$-structures over an elliptic curve.  For each
$\cE = \cE(g,\gt)$ one forms the spectral triple with twisted Dirac operator
$D_\cE =\partial_\cE + \partial_\cE^*$ with coefficients in $\cE$.
It has a natural transposed with
coefficients in the $(\cA_\gt , \cA_{\gt'})$-bimodule $ \cE'= \cE(g^{-1},
g\cdot \gt)$; here $\cA_\gt$ stands for the usual smooth subalgebra of $A_\gt$.   
By analogy with Connes' spectral characterization of
$\operatorname{Spin}^c$-manifolds~\cite{Con2013}, these are precisely the
spectral triples which confer to $T_\gt^2$ the structure of a noncommutative
manifold endowed with a metric structure.

Since $\cA_\gt$ coincides with the endomorphism algebra 
 $\End_{\cA_{\gt'}}(\cE')$, an arbitrary change of Hermitian
structure on the $A_{\gt'}$-module  $ \cE'$ amounts to the choice of an
invertible positive element $k \in \cA_\gt$.  Our first main result (Theorem
\ref{main-curv}) computes (for $g \neq 1$) the ``curvature densities''
$\cK^\pm_{\cE', k}$ associated to the corresponding Laplacians,
$\Lapl^+_{\cE', k} = k\partial_{\cE'} \partial^*_{\cE'} k$ on $0$-forms, and
$\Lapl^-_{\cE', k} = \partial^*_{\cE'} k^2\partial_{\cE'} $ on $(0,1)$-forms.
This is the analogue of \cite[Theorem 3.2]{ConMos2011} (which corresponds to
$g = 1$) with the distinction that the Morita equivalence trades the conformal
metric on the ``base'' (with Weyl factor $k \in \cA_\gt$) for a completely
general Hermitian metric on the ``bundle''.

 Our second result is the extension of \cite[Theorem 4.6]{ConMos2011} to
Heisenberg modules. We use a closed formula for the log-determinant of
$\Lapl_{\cE', k}$ (Theorem \ref{main-RS}) to prove that the scale invariant
version $F_{\cE'}$ of the Ray-Singer functional attains its extreme value only
if $k=1$. This means that the extremum is attained for the only Hermitian
structure whose associated Hermitian connection has constant curvature
(Theorem \ref{main-YM}). In this way, the Ray-Singer determinant acquires a
status similar to that of the Yang-Mills functional of~\cite{ConRie1987}.

Finally, our most noteworthy result (Theorem \ref{thm-grad-F}) establishes
that the gradient at $\log k^2$ of the functional $F_{\cE'}$ is equal to the
curvature of the conformal metric on $T_\gt^2$ with Weyl factor $k  \in \At$,
and thus independent of the $\operatorname{Spin}^c$-structure $\cE'$. Adopting
the value of the gradient of the Ray-Singer functional at a metric as the definition of
its Gaussian curvature, this proves the invariance of the latter under Morita equivalence. 
This kind of Morita invariance is a purely noncommutative phenomenon, which
in the commutative case passes unnoticed. Nevertheless, the
result is somewhat reminiscent of Gauss' {\em theorema egregium}, if one
is willing to liken the metric $\operatorname{Spin}^c$-structures on $T_\gt^2$ 
to the metrics inherited from embeddings of the ordinary torus in Euclidean space.

The essential technical tool which allows us to obtain the above results is the
extension of Connes' pseudodifferential calculus to $C^*$-dynamical systems on
Heisenberg modules. Although quite natural, this extension appears to be of
independent interest, in view of other potential applications, such as the
computation of the curvature for the Laplacian associated to a Riemannian
metric in the sense of J. Rosenberg~\cite{Ros2013}, or more generally that of
the index density for an elliptic differential operator on a noncommutative
$n$-torus with coefficients in a Heisenberg module (for the index itself, see
~\cite[Theorem 10]{Con:CAG}).

The other significant technical feature is that we succeed in freeing the entire
calculation from any computer assistance. This is accomplished by a systematic
use of known computational shortcuts within the
pseudodifferential symbol calculus, supplemented by the manipulation of the
modular identities between the $2$-parameter family of Laplacians naturally
associated with the datum. 

We conclude the introduction with a quick outline of the plan of the paper.
In Section \ref{Statements}, after a modicum of necessary background, we give
the precise formulation of the above mentioned results.  The extension of
Connes' pseudodifferential calculus to Heisenberg modules, 
for the general $n$-dimensional torus, is carried out in
Section \ref{SPsiDOMul}. 

Sections \ref{SResExp} and \ref{b-4} contain the key analytical results of the
paper (Theorems \ref{TResExp} and \ref{TbInt}) together with their proofs.  As
in \cite{ConTre2011}, the starting point is the recursion formula for the
resolvent. We improve the efficiency of the previous calculations in two ways.
First, we show that, modulo functions of $\xi=(\xi_1, \xi_2)$ whose average is
$0$, the relevant coefficient in the asymptotic expansion of the resolvent of
the Laplacians can be expressed as a function of $|\eta|^2$, where
$\eta=\xi_1+\taubar \xi_2$.  Secondly, in computing the integral of that
coefficient (Section \ref{b-4}) we exploit another kind of symmetry, namely
the relations between the conjugates under the action of the Tomita modular
operator on the $2$-parameter family of Laplacians associated to the twisted
spectral triple. This allows to reduce the calculation of the modular
curvature functions to the case of the graded Laplacian. Incidentally, it also
explains the seemingly magic relation between the two-variable functions $H_0$
and $H_1$ in~\cite[\S 3]{ConMos2011}.

The whole resolvent calculation is done in the algebra of pseudodifferential
multipliers. Thus, the resulting formulas are universally valid in any
effective realization of the pseudodifferential calculus, provided that one is able
to relate the operator trace to the dual trace on the multiplier algebra. This is 
exactly what is done in Section \ref{SEffPsiDO}, for the Heisenberg representation
(Theorems \ref{TEffTrace}, \ref{TEffHeatExp}) as well as for the "trivial
bundle" case (which was implicitly used in all the previous papers dealing
with the subject).  

Finally, since the trace formula for a pseudodifferential operator in Connes' calculus
is an important technical ingredient for which there is no published proof, we
devote Appendix \ref{AppA} to a complete derivation of it.

\tableofcontents  
  
\section{Background and formulation of main results}  \label{Statements}

The metric structure of a noncommutative geometric space with coordinate
$C^*$--algebra $A$ is given in spectral terms, by a triplet $(\cA, \cH, D)$,
where $\cA$ is a dense, holomorphically closed subalgebra of $A$, the latter
being represented by bounded operators on a Hilbert space $\cH$, and $D$ is an
unbounded self-adjoint operator on $\cH$ (playing the role of the Dirac
operator) such that the commutators $[D, a] = D \circ a - a \circ D$, $a \in
\cA$, are well-defined and bounded.  Local invariants reflecting the curvature
of such a space are extracted by means of spectrally defined functionals, from
the high frequency behavior of the spectrum of $D$ coupled with the action of
the algebra $\cA$ on $\cH$. In this section we shall specify these basic
notions in the case of the noncommutative torus, and thus provide the
necessary background to formulate the main results in precise terms and the
appropriate perspective. 
  
\subsection{The noncommutative torus $T_\gt^2$ and its standard metric structure}

We generally follow the notation in \cite{ConMos2011}, with some minor
deviations.  Let $\gt\in\R\setminus\Q$ be a fixed irrational number. By
$T_\gt^2$ we mean the noncommutative space, whose topology is given by the
$C^*$--algebra $A_\gt \equiv C(T^2_\gt)$ generated by two unitaries $U_j,
j=1,2$, subject to the commutation relation 
$\, U_2 U_1 = e^{\tpii \gt} U_1 U_2$.

\subsubsection{Smooth structure}
The ordinary torus $T^2 = \left( \R/\Z\right)^2$
acts on $A_\gt$ by the automorphisms $\alpha_{\sr} \in Aut(A_\gt)$, 
$\sr  \in \R^2 $, defined by
\begin{equation} \label{Eq:group-act}
\alpha_{\sr} (U_1^n \, U_2^m)=e^{2\pi i(r_1n + r_2m)}U_1^n \, U_2^m \, , \qquad
\sr = (r_1, r_2) \in \R^2 \, .
\end{equation}
The smooth structure is given by the subalgebra 
$C^\infty (T^2_\gt)\equiv \At \subset A_\gt$ consisting of the smooth elements
for the above  group of automorphisms \ie of those 
$a= \sum_{k,l\in\Z} a_{k,l} U_1^k U_2^l \in A_\gt$ such that the sequence
$\{a_{k,l}\} \subset \C$ is rapidly decreasing.  The canonical framing of
$T_\gt^2$ is given by the infinitesimal generators of the same group of
automorphisms \ie by the derivations $\delta_1, \delta_2 \in {\rm Der} (\At)$
defined by    
\begin{equation} \label{Eq:Lie-act}
\begin{split}
  &\delta_1 (U_1)= \tpii\, U_1,\qquad \delta_1 (U_2) =0, \\
     &\delta_2 (U_1)=0,\, \qquad  \delta_2 (U_2) = \tpii\, U_2 ;
     \end{split}
\end{equation}
they generate an abelian Lie algebra of derivations $\mathfrak{g}(\At) := \R \gd_1 + \R \gd_2$.

\subsubsection{Standard $\operatorname{Spin}^c$ Dirac operator} 

We fix once and for all a modular parameter $\tau\in \C$ with $\Im \tau > 0$, denote 
\begin{align} \label{Eq:c-struct}
  \gd_\tau=\gd_1+\ovl{\tau}\gd_2 , \qquad  \ovl{\gd_\tau}=\gd_1+ \tau\gd_2 ,
\end{align}
and define a $T^2$--invariant complex structure on $T_\gt^2$ via the splitting
\begin{align*}  
\mathfrak{g}_\C(\At) := \mathfrak{g}(\At) \otimes \C 
    = \C \gd_\tau \oplus \C \ovl{\gd_\tau} .
\end{align*} 
\MLrevision
\mpar{Referee item 1}

We adopt the convention that scalar products are complex antilinear in the
first and linear in the second argument.  Scalar products will be denoted by
$\scalar{\cdot,\cdot}$ or $\scalarL{\cdot,\cdot}$, while the notation
$\inn{\cdot,\cdot}$ will be reserved for $C^*$--valued inner products.  With
$\varphi_0$ denoting the unique normalized trace on $\At$, we let
$\cH_0(\At)=L^2(\At,\varphi_0)$ be the completion of $\At$ with respect to the
scalar product
\begin{equation*}
       \scalarL{a,b}= \varphi_0(a^*b).   
\end{equation*}
On the other hand we let $\cH^{(1,0)}$ be the unitary bimodule over $A_{\gt}$
given by the Hilbert space completion of the universal derivation bimodule
$\Omega^1(\At)$ of finite sums $\sum \, a \, d(b)$, $a,b \in \At$  with
respect to the inner product
\begin{equation*} 
(a\, d(b) , a'\, d(b') ) 
    = \vp_0 (a^* \, a' \, \gd_\tau (b') \, \gd_\tau(b)^*) \, , 
        \quad a,a',b,b' \in \At \, .
\end{equation*}
\MLrevision
\mpar{Referee item 2}
We denote by $\Omega^{(1,0)} (\At) = \bigsetdef{\sum \, a \,
\partial_\tau(b)}{ a,b\in \At} $ the
canonical image of $\Omega^1(\At)$ in $\cH^{(1,0)}(\At)$. Then  
$\At \ni a \mapsto \partial_\tau(a) \in \gO^{(1,0)}(\At)$ 
defines an unbounded operator from $ \cH_0(\At)$ to  $\cH^{(1,0)}(\At)$.
By \cite[Lemma 1.5]{ConMos2011},  $\cH^{(1,0)}(\At)$ can be identified
with $\cH_0(\At)$, via the map of $\At$--modules 
$\kappa: \cH^{(1,0)}(\At) \to \cH_0(\At)$ extending the assignment
\begin{align} \label{Eq:canon-id}
\gO^{(1,0)}(\At)\ni \sum a \, \partial_\tau(b) \mapsto 
\kappa \left(\sum a \, \partial_\tau(b)\right) :=\sum a\, \gd_\tau (b) \in \cH_0(\At).
\end{align} 
Under this identification, $\partial_\tau$ becomes the derivation $\gd_\tau$,
 viewed as unbounded operator from $\cH_0(\At)$ to $\cH^{(1,0)}(\At)$.

The $\operatorname{Spin}^c$ Dirac operator associated to the fixed complex structure and to the
flat metric on $T_\gt^2$ is the $(\partial_\tau + \partial_\tau^*)$--operator  
\begin{equation}\label{Eq:spectrip1}
 D= \left(
  \begin{array}{cc}
    0 & \partial_\tau^*\\
    \partial_\tau & 0\\
  \end{array}
\right)    \quad  \text{acting on } \quad \tilde{\cH} = \cH_0(\At) \oplus \cH^{(1,0)}(\At) \, ,
\end{equation}
which is isospectral to the usual $\operatorname{Spin}^c$ Dirac operator on the complex torus
$\C/\Gamma$, $\Gamma = \Z + \tau \Z$.
Both the left and the right action of $A_\gt$ on $\tilde{\cH}$ are unitary and 
give rise to spectral triples, $(A_\gt, \tilde{\cH}, D)$ and $(A_\gt^{\rm op},\tilde{\cH}, D)$.
The transposed of the latter in the sense of \cite[Def.~1.4]{ConMos2011}
is isomorphic to the former, but with the opposite grading. 

\begin{remark}\label{D-forms}
Note that the space of $1$--forms corresponding to the first spectral triple,
$\gO_D (\At) = \{\sum a [D, b] ; \, a, b \in \At \}$, is isomorphic as
$\At$--bimodule to the direct sum
$\gO^{(1,0)}(\At) \oplus \gO^{(0,1)}(\At)$, where
$\gO^{(0,1)}(\At) :=  \{\sum a \partial_\tau^*(b) ; \, a, b \in \At\}$.
\end{remark}

\subsection{Heisenberg modules and their holomorphic structures}
As mentioned in the introduction, we view the basic Heisenberg modules over
the noncommutative space $T^2_\gt$ as analogues of the fundamental complex
spinor bundles over $T^2$.  We recall below their definition as well as their
main features.

\subsubsection{Basic Heisenberg modules}
\label{SSSBasicHeisenberg}
 
We generally adopt the notation in~\cite{PolSchwa2003}, except for a few
changes which makes it compatible with Rieffel's general 
construction~\cite[Sec.~2,3]{Rie1988}.  For the convenience of the reader we
have collected a synopsis of the latter in the Appendix \ref{AppA}. To make
the connection between the Appendix and the following material see in
particular the Example \ref{SAppExample}.

Fix $g=\begin{pmatrix} a&b \\ c & d \end{pmatrix}\in \GL(2,\Z)$, and let
$g\gt=\frac{a\gt +b}{c\gt +b}$.  In the sequel we use the abbreviation $\gt'$
for $g\gt$. Unless otherwise specified, it will be assumed that $c \neq 0$.
For an explanation of the following formulas see Section \ref{SAppExample}. 

Let $ \cE(g,\gt)$ be the Schwartz
space $\sS(\R)^{|c|} \equiv \sS(\R\times \Z_c)$, with $\Z_c:=\Z/c\Z$.  
The (smooth) algebra $\At$
acts on the right by
\begin{equation}\label{Eq05251}  
        (fU_1)(t,\ga):= e^{\tpii (t- \frac{\ga d}{c})} f(t,\ga)\, , \quad
        (fU_2)(t,\ga):= f(t-\frac{c\gt + d}{c} ,\ga-1);
\end{equation}
the algebra $\Atp$, with generators denoted $V_1, V_2$, acts on the left by
\begin{equation}\label{Eq05234}  
        (V_1f)(t,\ga):= 
          e^{\tpii \bigl(\frac{t}{c\gt+d}- \frac{\ga}{c}\bigr)} f(t,\ga)\,, \quad
        (V_2f)(t,\ga):= f(t-\frac 1c,\ga-a).
\end{equation}
By analogy with vector bundles over an elliptic curve,  one defines the rank,
degree and slope of $\cE(g,\gt)$ as the numbers 
\begin{equation}\label{rk-deg-slop}
\begin{split}
 \rk\cE(g,\gt) &= c\gt +d \, , \quad
\deg\cE(g,\gt) = c \, , \text{ resp. }\\
  \operatorname{\mu}\bl\cE (g,\gt)\br&:=\frac{\deg\cE(g,\gt)}{\rk\cE(g,\gt)}= \frac1{\gt+\frac dc}.
\end{split}
\end{equation}
For $f_1,f_2\in \cE(g,\gt)$ let
\begin{equation}\label{Eq05236}  
 \scalarL{f_1,f_2}:= \int_{\R\times \Z_c} \ovl{f_1(t,\ga)} f_2(t,\ga) dt d\ga 
\end{equation}
denote the $L^2$--scalar product, where the integration is with respect to the
Lebesgue measure on $\R$ and the counting measure on $\Z_c$.  This determines
an $\At$--valued  inner product 
$\innr{\cdot,\cdot}:\cE(g,\gt)\times \cE(g,\gt)\longrightarrow \At$
(antilinear in the first argument), and also an $\Atp$--valued inner product
$\innl{\cdot,\cdot}:\cE(g,\gt)\times \cE(g,\gt)\longrightarrow \Atp$
(antilinear in the second argument), both uniquely characterized by the
identity
\begin{equation}\label{Eq05237}  
 |c\gt + d|\, \varphi'_0(\innl{f_2,f_1})\, = \, \scalarL{f_1,f_2}\,
 =\, \varphi_0(\innr{f_1,f_2}),
 \end{equation}
\cf\cite[Sec.~1]{PolSchwa2003}.
 
The completion $E(g,\gt)$ of $\cE(g,\gt)$ with respect to  
$\|\innr{\cdot,\cdot}\|^{1/2}$ is a full right $C^*$--module over $A_\gt$,
whose endomorphism algebra is $\End_{A_\gt}(E(g,\gt)) = A_{\gt'}$;
at the same time, it is a full left $C^*$--module over $A_{\gt'}$, and
$\End_{A_\gt'}(E(g,\gt)) = A_{\gt}$.  

On the other hand, the
completion $ \cH_0(g,\gt)$ of $\cE(g,\gt)$ with respect to the scalar product
\Eqref{Eq05236} is the Hilbert space $L^2(\R\times \Z_c)=L^2(\R)^{|c|}$.  Note
that the Hilbert space $ \cH_0(g,\gt)$ also coincides with the interior tensor
product $E(g,\gt)\otimes_{A_\gt} L^2(\At,\varphi_0)$.

The bimodules $\cE(g,\gt)$, $\gt \in (0, 1)$, $g \in \GL(2, \Z)$, which are
called \emph{basic}, correspond to pairs of relatively prime integers $(d,c)$.
A similar construction
obviously applies to any pair $(d, c) \in \Z^2$, but the resulting bimodule is
isomorphic to a direct sum of $m$ copies of the module associated to the pair
of relatively prime integers $(\frac{d}{m}, \frac{c}{m})$, where $m$ is the
greatest common divisor of the pair $(d, c)$. 
 
The standard connection $\nabla^{\cE}$ on $\cE = \Egt$ is given by the
directional covariant derivatives 
$\nabla_j : \cE(g,\gt) \rightarrow \cE(g,\gt)$, $j=1,2$, defined by 
\begin{equation}\label{Eq052311}  
 (\nabla_1 f)(t,\ga) := \frac{\pl}{\pl t} f(t,\ga),\quad
 (\nabla_2 f)(t,\ga) := {\tpii}\cdot {\mu\bl\cE (g,\gt)\br} \cdot t \cdot  f(t,\ga) .
\end{equation}
It is actually a bimodule connection, in the sense that it is compatible with the
standard derivations $(\delta_1 , \delta_2)$ on $\At$, as well as with the
(normalized) derivations $\delta_j':=\frac{1}{\rk\cE(g,\gt)} \delta_j$, $j=1,2$,
on $\Atp$. Specifically, for $f\in \cE(g,\gt), a\in \Atp, b\in\At$, one has
\begin{equation}\label{Eq052312}  
 \nabla_j(a\cdot f\cdot b) = a \cdot (\nabla_j f) \cdot b +  \delta'_j(a) \cdot f\cdot b +
 a \cdot f\cdot \delta_j(b) , \qquad j=1,2.
\end{equation}
Furthermore, the connection is {\em Hermitian}, meaning that 
\begin{equation}\label{Eq052313}   
  \begin{split}
   \delta_j\bigl( \innr{f_1,f_2}\bigr)&= \innr{\nabla_j f_1,f_2}+ \innr{f_1,\nabla_j f_2} , \\
      \delta_j'\bigl(\innl{f_1,f_2}\bigr)&=  \innl{\nabla_j f_1,f_2} + \innl{f_1,\nabla_j f_2}.
  \end{split}
\end{equation}

The bimodules of the form $\Egt$ are called {\em basic Heisenberg modules}.
The designation ``Heisenberg modules'' comes from the fact that the standard
connection satisfies the Heisenberg commutation relation 
\begin{equation} \label{const-curv}
[\nabla_1,\nabla_2]=\, {\tpii}\cdot{\mu\bl\cE (g,\gt)\br}\cdot \Id ,
\end{equation}
which expresses the fact that it has {\em constant curvature}. 

This relation is  the infinitesimal form of a unitary representation of the
(polarized) Heisenberg group $H = \R^2\times \R$ with multiplication law
\begin{align} \label{H-law}
(x, \zeta) \cdot (y, \eta) \, := \, \left(x+y, \zeta +\eta + \gs (x, y)\right) ,
\end{align} 
where $ \gs : \R^2 \to \R$ is the standard symplectic form  
\begin{equation}\label{EqProRepSymForm}  
 \gs (x,y)\, := \, x_1 y_2-x_2 y_1 , \qquad  x =(x_1, x_2) , \, y =(y_1, y_2) \in \R^2 .
\end{equation} 
Indeed, one easily checks that the formula
\begin{equation}\label{pi-0}   
   \bigl( \pi_0(x, \zeta)f\bigr)(t):= e^{\pi i {\mu \bl\cE(g,\gt)\br}\, 
   (\zeta  + x_1 x_2 + 2x_2 t)}\;
   f(t+x_1) , \quad f \in L^2(\R), 
\end{equation}
defines a unitary representation of $H$ on $L^2(\R)$. This is in fact the
unique (up to unitary equivalence) representation of $H$ with central
character 
\begin{align*}
\text{Center} (H) \ni (0, \zeta) \mapsto e^{\pi i {\mu \bl\cE(g,\gt)\br}\zeta} \in T .
\end{align*} 
By a slight abuse, we shall continue to use the same notation for its multiple
$\pi_0 \otimes \Id$ acting on the Hilbert space $L^2(\R\times \Z_c) = L^2(\R)
\otimes \C^{|c|}$.  The space of $C^\infty$--vectors of the latter is
precisely $\cE(g,\gt)$. 

For later use, let us point out that there is a $2$--parameter family of
unitarily equivalent representations to $\pi_0$.  Indeed, associating to each
$w\in\R^2$ the unitary character $\chi_w :H \to T$,
\begin{align*}
\chi_w (x, \zeta) \, := \, e^{i   \inn{w,x}} ,
\end{align*}
one forms the interior tensor product representation
\begin{equation}\label{eq:1209061}  
\pi_w \, := \, \chi_w \otimes \pi_0  ,
  \end{equation}
which has the same central character as $\pi_0$.  It will be convenient to
treat this family as projective representations of 
$\R^2$, $\pi_w(x) := \pi_w(x, 0)$, satisfying the cocycle identity
\begin{equation}\label{Eq:1209062}  
    \pi_w(x)\pi_w(y)= e^{\pi i\mu\bl\cE(g,\gt)\br\gs(x,y)} \; \pi_w(x+y) .
\end{equation}
At the infinitesimal level one has
 \begin{align}\label{Eq:1209064}   
 \begin{split}
    \frac{\pl}{\pl x_j}\big|_{x=0} \pi_w(x) &= \nabla_j + i w_j,\qquad
    \text{analogous to} \\ 
      \frac{\pl}{\pl x_j}\big|_{x=0} \ga_x  &= \delta_j, \qquad
     \frac{\pl}{\pl x_j}\big|_{x=0} \ga_x' = \delta_j', \qquad j=1,2 .
 \end{split}
\end{align}
In particular, \Eqref{Eq052311} and \Eqref{Eq052312} follow from
\Eqref{Eq:1209064}.  We also note that $\pi_w$ implements the action of $T^2$
on the $\Atp$ and $\At$, \ie satisfies 
\begin{equation}\label{Eqpiw}
\begin{split}
       \pi_w(x) V_j \pi_w(x)^* &= e^{\frac{\tpii}{\rk\cE(g,\gt)} x_j} V_j=:
       \ga_x'(V_j),\\
       \pi_w(x) U_j \pi_w(x)^* &= e^{\tpii x_j}
       U_j=: \ga_x(U_j) .
\end{split}
\end{equation}

\subsubsection{$\operatorname{Spin}^c$ Dirac operators with coefficients in Heisenberg modules}   
Given a Heisenberg module as above $\cE= \cE(g,\gt)$ with $c >0$, and equipped
with its standard connection $\nabla^\cE$, one can apply the general recipe
(\cf\cite[VI.1]{Con1994}) to form the twisted version of the standard
$\operatorname{Spin}^c$ Dirac operator $D = \partial_\tau + \partial_\tau^*$ with
coefficients in $\cE$:
\begin{align}\label{Eq:E-twist}
\begin{split}
D_\cE&= \left(
  \begin{array}{cc}
    0 & D_\cE^{-}\\
D_\cE^{+} & 0\\
  \end{array} \right), \quad \text{where}  \quad  D_\cE^{-} = \left(D_\cE^{+}\right)^* \\ 
  \text{and} \qquad 
D_\cE^{+} (f\otimes a)&=\partial_\tau  (f)\, a \, +\, f \otimes \partial_\tau (a) , 
\qquad f\in\cE, \, a\in\At.
\end{split}
 \end{align} 
Here the standard connection is regarded as a map 
$\nabla^\cE : \cE \to \cE\otimes_{\At}\gO^1_D (\At)$. In view of the decomposition
of $1$--forms from Remark \plref{D-forms}, combined with the canonical identification 
(see~\cite[VI.3, Lemma 12]{Con1994}) of $\gO^1_D (\At)$ with off-diagonal matrices
in  $M_2 (\At)$,
the connection $\nabla^\cE$ on $\cE= \cE(g,\gt)$  splits into a \emph{holomorphic} 
and an \emph{anti-holomorphic} component
\begin{equation}\label{Eq:nabla-form}
\nabla^\cE= \partial_\cE \oplus \partial_\cE^* ,
\quad \text{where}  \quad \partial_\cE:=\nabla_1+\taubar\nabla_2 .
\end{equation}
The operator $\partial_\cE : \Omega^0(\cE)\to \Omega^{(1,0)} (\cE)
=\cE\otimes_{\At}\Omega^{(1,0)} (\At)$  defines a {\em holomorphic
structure}, \ie satisfies the property
\begin{equation} \label{holo-struct}
\partial_\cE (fa)=\partial_\cE  (f)\, a \, +\, f\, \gd_\tau (a) ,
\qquad f\in\cE, \, a\in\At. 
 \end{equation}
Together with the identity in the second line of \Eqref{Eq:E-twist}, 
which can be equivalently written as
\begin{align*}
D_\cE^{+} (f\, a\otimes 1)&=\partial_\cE (f)\, a \, +\, f \, \gd_\tau (a) \otimes 1 ,
\qquad f\in\cE, \, a\in\At ,
\end{align*}
this shows that
$D_\cE^{+}$ is just the extension of $\partial_\cE$ to an unbounded operator
from  $ \cH_0(g,\gt)$ to $\cH^{(1,0)} (g,\gt) := E(g,\gt)\otimes_{\At}\cH^{(1,0)}(\At)$. 
The latter can be identified to $ \cH_0(g,\gt)$ via the bimodule isomorphism
\begin{align} \label{Eq:canon-id-Eg}
\kappa_{g,\gt} =\Id \otimes_{\At}\kappa : E(g,\gt)\otimes_{\At}\cH^{(1,0)}(\At) \to
\cH_0(g,\gt) = E(g,\gt)\otimes_{\At}\cH_0(\At).
\end{align} 
It follows that
\begin{equation}\label{Eq:spectripEg}
 D_\cE= \left(
  \begin{array}{cc}
    0 & \partial_\cE^*\\
    \partial_\cE & 0\\
  \end{array} \right)    
\quad  \text{acting on } \quad \tilde{\cH}(g,\gt)
 = \cH_0(g,\gt) \oplus \cH^{(1,0)}(g,\gt)\, ,
\end{equation}
giving a spectral triple $\left(A_\gt^{\rm op}, \tilde{\cH}(g,\gt), D_\cE\right)$ for the 
right action of $A_\gt$.

As a matter of fact, the holomorphic component $\partial_\cE$ 
 satisfies the analogous identity to \Eqref{Eq052312},
\begin{equation} \label{holo-struct2}
\partial_\cE (afb)= a\, (\partial_\cE  f)\, b \, +\, \gd_\tau'(a) \, f\, b \, +\, a \, f\, \gd_\tau (b) , 
\quad f\in\cE, \, a\in\At',  \, b\in\At ,
 \end{equation} 
where $ \gd_\tau = \gd_1 + \ovl{\tau} \gd_2$ and  $ \gd_\tau' = \gd'_1 + \ovl{\tau} \gd'_2$.

Such a map is called in~\cite{PolSchwa2003} a \emph{standard holomorphic structure}.
There is a one-parameter family of such structures, given by
\begin{equation} \label{holo-struct-z}
\partial_{\cE,z}:=  \partial_\cE + z \cdot \Id ,
 \qquad  z\in\C . 
 \end{equation}
 It turns out that these are the only ones. Actually, an even stronger statement
 holds true.

\begin{prop} \label{uni-bi-holo}
 Assume $g\in\GL(2, \Z)$ with $c\not=0$ and let $\tilde\partial$ be a holomorphic structure
 on $\cE(g,\gt)$ which is compatible with
 some $\tilde\delta \in \mathfrak{g}(\Atp)$, \ie satisfies
 \begin{align*}
 \tilde\partial(af)= a \tilde\partial (f) + \tilde\delta(a)f , \qquad f\in \cE(g,\gt), \, a \in\Atp.
\end{align*}
 Then $\tilde\delta=\delta_\tau'=\delta_1'+\taubar\delta_2'$ and
 $\nabla=\partial_{\cE,z}$ \, for some
 $z\in\C$.
\end{prop} 
\begin{proof} In view of \Eqref{holo-struct2}, the
difference $\tilde\partial -\partial_\cE$  is $\At$--linear, hence
it is given by the left action of some $\go \in \Atp$.
Let us show that  $\tilde\partial - \partial_\cE$  is also $\Atp$--linear.  
As $\tilde\partial = \partial_\cE + \go \cdot$,
 for any $a\in\Atp, f\in\cE(g,\gt)$ we have on the one hand,
\begin{equation}\label{Eq052314}  
\tilde\partial(af)= \tilde\delta (a) f + a \partial_\cE f + a \go f ,
\end{equation}
and on the other hand
\begin{equation}\label{Eq05252}  
 \tilde\partial (af)= \partial_\cE (af) + \go a f = a \partial_\cE f + \delta_\tau'(a) f +
 [\go,a] f + a \go f.
\end{equation}
This implies $\tilde\delta = \delta_\tau'+ [\go,\cdot]$. Hence
$\tilde\delta-\delta_\tau'$ is an inner derivation of $\Atp$.
Since it is also a linear combination of $\delta_j'$, it follows
that $\tilde\delta=\delta_\tau'$. But then $\go$ is in the center
of $\Atp$, and so $\go \in \C \cdot \Id$.
\end{proof}

\begin{remark}\label{rem:double-triple}
The property \Eqref{holo-struct2} ensures that the operator $D_\cE$ also
gives rise to a spectral triple $\left(A_{\gt'}, \tilde{\cH}(g,\gt), D_\cE\right)$ for the 
left action of $A_{\gt'}$. 
 \end{remark}

\begin{remark}\label{rem:int-pert}
Using the family of holomorphic structures \Eqref{holo-struct-z} 
one could define a family of operators $\{D_{\cE, z} \, ; \, z \in \C \}$. 
The corresponding spectral triples would not be essentially different however,
because $D_{\cE, z}$ is merely an internal perturbation (in the sense of
\cite{Con1996}) of the operator $D_\cE$.
\end{remark}

\begin{prop}  \label{harmonic-osc}
Assume $\Im (\tau) > 0$ and $\deg (\cE) \neq 0$. 
\begin{itemize}
\item[(1)]
If $\mu(\cE) > 0$ then $\dim \Ker \partial_\cE = |\deg (\cE)|$ and $\Ker \partial^*_\cE = 0$;
if $\mu(\cE) < 0$ then $\dim \Ker \partial^*_\cE = |\deg (\cE)|$ and $\Ker \partial_\cE = 0$.

\item[(2)] 
The zeta function 
$\zeta_{\Lapl_\cE} (s) = \Tr \left(\Lapl_\cE^{-s}\right)$, $\, \Re(s) > 1$,
has a meromorphic continuation to $\C$; it is regular at $s=0$ and
one has
\begin{align}  \label{Eq:zeta0}
\zeta_{\Lapl_\cE} (0) &=  - \frac12 |\deg (\cE)| , \\ \label{Eq:zetaprime0}
\zeta'_{\Lapl_\cE} (0)
   &= \frac12 |\deg (\cE)| \cdot  \log \bl 2|\mu(\cE)| \Im(\tau)\br, \\
\Res_{s=1} \zeta_{\Lapl_\cE}(s) & = 
             \frac{|\rk(\cE)|}{4\pi\Im\tau}. \label{EqZetaConstant}
\end{align}
\end{itemize}
\end{prop} 

\begin{proof} 
Claim (1), also proved in~\cite[Proof of Prop.~2.5]{PolSchwa2003}, is easy
to justify. By its very definition, $\partial_\cE$ is a direct sum of $|\deg (\cE)|$ copies of 
the operator $\, D= \frac{d}{d t} + \tpii \mu(\cE) \ov{\tau} \, t$. 
The latter has $1$--dimensional kernel in $\sS (\R)$ and no cokernel 
when $ \mu(\cE) > 0$, since then 
$\Re(\tpii \mu(\cE) \taubar) = 2 \pi  \mu(\cE) \Im (\tau) > 0$. 

When $ \mu(\cE) < 0$
the same argument applies to the operator $\partial^*_\cE$.

Claim (2) also follows from a routine calculation involving the harmonic oscillator.
Indeed, the Laplacian $\Lapl_\cE = \partial^*_\cE \partial_\cE$ is a 
direct sum of $|\deg (\cE)|$ copies of the harmonic oscillator
\begin{align} \label{Eq:ho}
H := D^*D =
 - \frac{d^2}{d t^2} + 4 \pi^2 \mu(\cE)^2 |\tau|^2 t^2 
 - 4 \pi i \mu(\cE) \Re(\tau) \, t\, \frac{d}{d t}- \tpii \mu(\cE) \taubar \Id . 
\end{align}
One easily checks that  $[D, D^*] = 4\pi  \mu(\cE) \Im (\tau)\Id$, and so
$DD^* = H + 4\pi  \mu(\cE) \Im (\tau)\Id$,
which implies
\begin{align} \label{forward}
HD^* = D^*(DD^*) = D^* \big(H + 4\pi  \mu(\cE) \Im (\tau)\Id\big) .
\end{align} 
On the other hand, \,
$DH = (DD^*)D = \big(H + 4\pi  \mu(\cE) \Im (\tau)\Id\big)D$, \,
whence
\begin{align} \label{backward}
HD = D \big(H - 4\pi  \mu(\cE) \Im (\tau)\Id\big) .
\end{align}
Thus, by \Eqref{forward}
$D^*$ shifts forward each eigenspace $V_\lambda$ of $H$ to 
$V_{\lambda + 4\pi  \mu(\cE) \Im (\tau)}$, while by \Eqref{backward}
 $D$ shifts backward
$V_\lambda$ onto $V_{\lambda - 4\pi  \mu(\cE) \Im (\tau)}$. 
Since $\dim (\Ker D) =1$ for $\mu(\cE)>0$ resp. $\dim (\Ker D^*) = 1$
for $\mu(\cE)<0$, one concludes that the spectrum of $H$ is 
\[
\spec H = 4\pi  |\mu(\cE)| \Im (\tau)  \begin{cases}
                \Z_+, & \mu(\cE)>0,\\
                \Z_+\setminus\{0\},& \mu(\cE)<0,
    \end{cases} 
\]
and each eigenvalue has multiplicity $1$.
In any case, the $\zeta$--function is
\[
\zeta_{\Lapl_\cE} (s) = |\deg(\cE)|\cdot \bl 4\pi |\mu(\cE)\Im\tau\br^{-s} \cdot
   \zeta_R(s)
\]
with the Riemann zeta function $\zeta_R(s)=\sum_{n=1}^\infty n^{-s}$.

The equations \Eqref{Eq:zeta0}, \eqref{Eq:zetaprime0} and
\eqref{EqZetaConstant} immediately follow from the corresponding values of the
Riemann zeta function,
$\Res_{s=1} \zeta_R(s)=1, \zeta_R(0) = -\frac12$ and  $\zeta_R'(0) = -\frac12 \log (2\pi)$.
\end{proof}

\subsection{Conformal twisting and curvature}
\subsubsection{Conformal change of background metric}

We begin by constructing, in the manner of~\cite[\S 1.5]{ConMos2011}, 
a bi-spectral triple associated to a `noncommutative 
 Weyl factor' $k=e^{h/2} \in \At$ with $h=h^*\in \At$. 
 
 Let $\varphi = \varphi_h$ be defined by
\begin{align*} 
  \varphi (a) \, = \, \varphi_0 (a e^{-h}) , \qquad a \in \At
\end{align*}
and let $\cH_\varphi = L^2(\At,\varphi)$ be the Hilbert space completion of
$\At$ with respect to the inner product
\begin{align*} 
  (a, b) \, = \, \varphi (a^*b) , \qquad a, b \in \At .
\end{align*}
Denote by $\cH_{\cE,\varphi} = \cH_\varphi(g,\gt)$ the interior tensor product 
$E(g,\gt)\otimes_{A_\gt} L^2(\At,\varphi)$ and by  $\cH_{\cE}^{(1,0)}=
\cH_0(g,\gt)$ the interior tensor product   
$E(g,\gt)\otimes_{A_\gt}\cH_0$, where $\cH_0 = L^2(\At,\varphi_0)$. 
Both are $(A_{\gt'}, A_{\gt})$--bimodules but the natural (right) action of $A_\gt$ on 
$\cH_{\cE,\varphi}$ is no longer unitary. To rectify this, one replaces it,
as in~\cite[\S1.5]{ConMos2011}, by the unitary action
\begin{align*} 
a^{\rm op} (f) \, = \, f \cdot k^{-1} a k \, , \quad a \in \At, \,  f \in \Egt .
\end{align*}
 
In the present setting, Lemma 1.7 in~\cite{ConMos2011} has the following counterpart:
\begin{lemma}\label{Unitary}
The right action 
of $k=e^{h/2}$ on $\Egt$,
\begin{align*} 
W (f) \, = \, f \cdot k \, , \qquad f \in \Egt ,
\end{align*}
extends to an isometry $W: \cH_0(g,\gt) \to  \cH_{\cE,\varphi}$, which
establishes an isometric $(\Atp, \At)$--bimodule isomorphism between 
$\cH_0(g,\gt)$ and $\cH_{\cE,\varphi}$.
\end{lemma}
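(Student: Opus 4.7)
The plan is to verify the isometry property by a direct computation on elementary tensors, then check the bimodule equivariance, and finally observe surjectivity via the invertibility of $k$.

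First I would pick $f \in \cE(g,\gt)$ and compute $\|Wf\|_{\cH_{\cE,\varphi}}^2$. Since $W(f) = f \cdot k$, when viewed in the interior tensor product $E(g,\gt)\otimes_{A_\gt} L^2(\At,\varphi)$ it equals $f \otimes k$ (using that $k \in \At$ moves across the balanced tensor). The inner product on the interior tensor product is
\begin{equation*}
(f\otimes a , f\otimes b) = \varphi\bl a^* \innr{f,f} b\br ,
\end{equation*}
so $\|Wf\|^2 = \varphi\bl k\, \innr{f,f}\, k\br = \varphi_0\bl k\, \innr{f,f}\, k \, e^{-h}\br$. Because $k = e^{h/2}$ is self-adjoint and $e^{-h}=k^{-2}$, this is $\varphi_0\bl k\, \innr{f,f}\, k^{-1}\br$, which by the trace property of $\varphi_0$ equals $\varphi_0(\innr{f,f}) = \|f\|^2_{\cH_0(g,\gt)}$ in view of \Eqref{Eq05237}. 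Hence $W$ preserves the norm on the dense subspace $\cE(g,\gt)$ and extends to an isometry $\cH_0(g,\gt)\to \cH_{\cE,\varphi}$.

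Next I would verify bimodule equivariance. For $a\in\Atp$ the left action commutes with the right action of $\At$, so $W(a\cdot f) = (a\cdot f)\cdot k = a\cdot W(f)$, which is left $\Atp$--linearity. For the right $\At$--action, recall that the unitary action on $\cH_{\cE,\varphi}$ is $a^{\rm op}(\xi) = \xi \cdot k^{-1} a k$. Then
\begin{equation*}
a^{\rm op}\bl W(f)\br = (f\cdot k)\cdot k^{-1} a k = f\cdot a\cdot k = W(f\cdot a),
\end{equation*}
which is the desired intertwining with the natural right $\At$--action on $\cH_0(g,\gt)$.

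Finally, for surjectivity, since $k\in\At$ is invertible the map $f\mapsto f\cdot k$ is a bijection of $\cE(g,\gt)$ onto itself, and any elementary tensor $f\otimes a$ in $\cH_{\cE,\varphi}$ with $f\in\cE(g,\gt)$, $a\in\At$ can be rewritten as $(f\cdot a k^{-1})\otimes k = W(f\cdot a k^{-1})$. Thus the image of $W$ contains a dense subspace of $\cH_{\cE,\varphi}$, and being an isometry it must be surjective. No step seems to present a real obstacle; the only point requiring some care is the trace manipulation $\varphi_0(k\innr{f,f}k^{-1}) = \varphi_0(\innr{f,f})$, which relies on the centrality of the trace on $\At$ and on $k$ being positive and invertible in $\At$.
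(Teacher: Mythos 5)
Your proposal is correct and follows essentially the same route as the paper: the isometry is checked on the dense subspace $\cE(g,\gt)$ via $\varphi(k\,\innr{f,f}\,k)=\varphi_0(k\,\innr{f,f}\,k^{-1})=\varphi_0(\innr{f,f})$ using the tracial property of $\varphi_0$ and \Eqref{Eq05237}, and the $(\Atp,\At)$--equivariance is the same one-line verification $a^{\rm op}(W(f))=f\cdot k\cdot k^{-1}ak=W(f\cdot a)$. Your explicit surjectivity argument via invertibility of $k$ is a welcome small addition that the paper leaves implicit.
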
 

\begin{proof} Indeed, if $\, f , g \in \Egt$ then
\begin{align*} 
 \scalar{W(f), W(g)}_{\cH_{\cE,\varphi}}
 = \varphi(\innr{g\cdot k,f\cdot k})=\varphi_0(\innr{f,g})
 =\scalar{f, g}_{\cH_0(g,\gt)} .
\end{align*}
By its very definition $W$ is $\Atp$--linear. On the other hand, for any  $a \in \At$
\[
a^{\rm op}(W(f)) =  f \cdot k  \cdot k^{-1} a k = W(f \cdot a) .\qedhere
\]
\end{proof} 

Let now $\partial_{\cE, \varphi}$ be the closure of   $\partial_\cE$ viewed as unbounded 
operator from  $\Egt \subset \cH_\varphi(g,\gt)$ to  $\cH^{(1,0)}(g,\gt) $. 
The analogue of~\cite[Cor. 1.9 (ii)]{ConMos2011} reads as follows.

\begin{prop}\label{bi-spec-trip}
The operator
\begin{align}
D_{\cE, \varphi} \, =\,  
\begin{pmatrix}
  0   &   \partial_{\cE, \varphi}^*  \\
\partial_{\cE, \varphi} & 0
\end{pmatrix}  \quad \text{acting on} \quad
\tilde{\cH}_{\cE,\varphi} = \cH_{\cE,\varphi} \oplus \cH^{(1,0)}(g,\gt) 
\end{align}
gives rise to a {\em twisted} graded spectral triple 
$\left(A_\gt^{\rm op},\, \tilde{\cH}_{\cE, \varphi}, \, D_{\cE, \varphi}\right)$ with respect 
to the right action of $A_\gt$.
On the other hand the left action of $A_{\gt'}$ yields an {\em ordinary} graded
spectral triple $\left(A_{\gt'},\, \tilde{\cH}_{\cE, \varphi}, \, D_{\cE, \varphi}\right)$.
\end{prop} 

\begin{proof}
The twisting by the automorphism $\gs \in \operatorname{Aut} (A_\gt)$, 
 $\, \gs (a) = k^{-1} a k$, becomes self-evident when the operator $D_{\cE, \varphi}$
 is transferred via the isometry $W$. Indeed, denoting 
 $\, \tilde{W} = W \oplus \kappa_{g,\gt}^{-1}$, one has
\begin{align} \label{W-conj}
\tilde{W}^* D_{\cE, \varphi} \tilde{W} =
\begin{pmatrix}
  0   &   R_k\partial_{\cE}^*  \\ 
\partial_{\cE}R_k & 0
\end{pmatrix} \, \text{acting on} \,
\tilde{\cH}(g,\gt) = \cH_0(g,\gt) \oplus \cH_0(g,\gt),
\end{align} 
 \end{proof}
 
\begin{lemma}\label{AntiUnitary}
 For $g\in\GL(2, \Z)$ with $c\not=0$ the map defined by  
\begin{equation}\label{Eq05246}  
 J_{g,\gt}(f)(x,\ga)=   \ovl{f((c\gt+d)x,\, -d^{-1}\ga)} 
\end{equation}
is an antiisomorphism of $C^*$--bimodules
$\cE(g,\gt)\to \cE(g^{-1}, g\gt)$.
More precisely, $J=J_{g,\gt}$ satisfies the following identities
for $f , f_1, f_2 \in\Egt, a\in\Atp, b\in\At$:
\begin{align}
 J(a f b)& = b^* J(f) a^*,\label{Eq05247a}\\
      \tensor[_{\At}]{\langle}{} J(f_1), J(f_2) \rangle 
      & = \innr{f,g},\label{Eq05247b}\\
      \langle J(f_1),J(f_2)\rangle_{\Atp} &
      = \innl{f_1, f_2},\label{Eq05247c}\\
     |c\gt+d| \scalarL{J(f_1),J(f_2)} &= \scalarL{f_2,f_1}.\label{Eq05247d}
\end{align}
In addition, \, $J_{g,\gt}^{-1} = J_{g^{-1},g\gt}$.
 \end{lemma}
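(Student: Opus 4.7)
The strategy is to verify the lemma by direct computation with the explicit action formulas \eqref{Eq05251} and \eqref{Eq05234}, both for $\cE(g,\gt)$ and for $\cE(g^{-1}, g\gt)$, the latter obtained by substituting $g\mapsto g^{-1}$ and $\gt\mapsto g\gt$. Writing $\varepsilon = \det g \in \{\pm 1\}$, the matrix $g^{-1}$ has entries $(\varepsilon d, -\varepsilon b, -\varepsilon c, \varepsilon a)$, and the key identity
\[
(-\varepsilon c)\cdot g\gt + \varepsilon a \,=\, \frac{\varepsilon(ad-bc)}{c\gt+d} \,=\, \frac{1}{c\gt+d}
\]
shows how the ``$c\gt+d$'' factor inverts under $g\mapsto g^{-1}$. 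Since $|-\varepsilon c|=|c|$, both modules have the same underlying Schwartz space, so $J = J_{g,\gt}$ is manifestly conjugate-linear between them. I would verify the bimodule antiisomorphism identity \eqref{Eq05247a} on the generators $U_1, U_2$ of $\At$ and $V_1, V_2$ of $\Atp$. For example, a direct application of \eqref{Eq05251} gives
\[
J(fU_1)(x,\ga) \,=\, e^{-\tpii\bigl((c\gt+d)x + \ga/c\bigr)}\, J(f)(x,\ga),
\]
while \eqref{Eq05234} applied on $\cE(g^{-1}, g\gt)$ with parameters $(-\varepsilon c, \varepsilon a, g\gt)$ produces the complex-conjugate phase, yielding $J(fU_1) = U_1^*\cdot J(f)$. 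The remaining checks for $U_2, V_1, V_2$ are parallel computations using only elementary arithmetic modulo $c$ and the displayed identity.

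Identity \eqref{Eq05247d} reduces to a single change of variables $t = (c\gt+d)x$, $\beta = -d^{-1}\ga$ in \eqref{Eq05236}: the Jacobian $|c\gt+d|^{-1}$ produces the prefactor, while the swap $f_1 \leftrightarrow f_2$ arises from the complex conjugation built into $J$. To deduce \eqref{Eq05247b} and \eqref{Eq05247c} I would combine \eqref{Eq05247d} with the defining relation \eqref{Eq05237}, applied to both bimodules; the analogous rank prefactor for $\cE(g^{-1}, g\gt)$ is $1/|c\gt+d|$, which exactly cancels the factor in \eqref{Eq05247d}, producing the trace-level identities
\[
\varphi_0\bigl(\tensor[_{\At}]{\langle}{} J(f_2), J(f_1)\rangle\bigr) \,=\, \varphi_0(\innr{f_2, f_1}), \quad \varphi'_0\bigl(\langle J(f_1), J(f_2)\rangle_{\Atp}\bigr) \,=\, \varphi'_0(\innl{f_1, f_2}).
\]
To upgrade these to equalities in $\At$ and $\Atp$ respectively, I use the bimodule antiisomorphism \eqref{Eq05247a}: replacing $f_2$ by $f_2 b$ with $b \in \At$ in the first identity, one applies \eqref{Eq05247a} to rewrite $J(f_2 b) = b^*\cdot J(f_2)$ and exploits the left $\At$-linearity of the $\At$-valued inner product on $\cE(g^{-1}, g\gt)$ to factor $b^*$ outside; cyclicity of $\varphi_0$ then shows that $\tensor[_{\At}]{\langle}{} J(f_1), J(f_2)\rangle$ and $\innr{f_1, f_2}$ have identical $\varphi_0$-pairing with every element of $\At$. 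Faithfulness of $\varphi_0$ yields \eqref{Eq05247b}, and the dual argument, using the right $\Atp$-action on $\cE(g^{-1}, g\gt)$ to move $\Atp$-elements across $J$, proves \eqref{Eq05247c}.

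Finally, the identity $J_{g,\gt}^{-1} = J_{g^{-1}, g\gt}$ is a direct composition check: applying $J_{g^{-1},g\gt}$ to $J_{g,\gt}(f)$ yields $f\bigl(x,\, \varepsilon(ad)^{-1}\ga\bigr)$, and since $ad \equiv \varepsilon \pmod c$ (from $ad-bc=\varepsilon$) we have $\varepsilon(ad)^{-1} \equiv 1 \pmod c$, confirming the composition is the identity. The main obstacle I anticipate is bookkeeping rather than depth: the parameter substitutions $c \mapsto -\varepsilon c$, $d \mapsto \varepsilon a$, $\gt \mapsto g\gt$ must be carried consistently through every action and inner-product formula, and the sign $\varepsilon = \det g$ must be tracked at each step. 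Each individual computation is elementary, but the matching of phases, prefactors, and modular arithmetic in $\Z/c\Z$ is tedious and error-prone.
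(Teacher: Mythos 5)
Your proposal is correct and follows essentially the same route as the paper, whose proof consists of the single remark that everything ``is checked by direct calculation'' and that \eqref{Eq05247c}, \eqref{Eq05247d} follow from \eqref{Eq05247a} and \eqref{Eq05247b}; your computations (the inversion $c'\gt'+d'=1/(c\gt+d)$, the change of variables giving \eqref{Eq05247d}, the composition check via $ad\equiv\det g \pmod c$) are the ones the authors leave implicit. The only difference is organizational: you take \eqref{Eq05247a} and \eqref{Eq05247d} as primary and recover \eqref{Eq05247b}, \eqref{Eq05247c} via \eqref{Eq05237}, faithfulness of the traces and module-linearity, whereas the paper derives \eqref{Eq05247c}, \eqref{Eq05247d} from \eqref{Eq05247a}, \eqref{Eq05247b} --- both orderings are valid instances of the same direct verification.
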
 
 
\begin{proof} This is checked by direct calculation.
 Note that \Eqref{Eq05247c}, \eqref{Eq05247d} follow
 from \Eqref{Eq05247a} and \eqref{Eq05247b}.
\end{proof}

\begin{prop}\label{TST} 
The transposed of the twisted spectral triple 
$\left(A_\gt^{\rm op}, \tilde{\cH}(g,\gt), D_{\cE, \varphi}\right)$  is
isomorphic to the twisted spectral triple 
$\left(A_\gt, \, \cH_0(g^{-1},\gt')\oplus
\cH_0(g^{-1},\gt'), \,\ovl{D}_{\cE', k}\right)$ 
where  
\begin{align} \label{Trans-op}
&\ovl{D}_{\cE', k} \, =\, - \rk(\cE')
\begin{pmatrix}
  0   &   k {\partial}_{\cE'} \\
  {\partial}_{\cE'}^* k & 0 
\end{pmatrix} , \quad
 \text{with} \quad {\partial}_{\cE'} =  \nabla'_1 + \ovl{\tau}  \nabla'_2 .
\end{align}
In turn, the spectral triple $\left(A_{\gt'},\, \tilde{\cH}_{\cE, \varphi}, \, D_{\cE, \varphi}\right)$
is isomorphic to the spectral triple $\left(A_{\gt'}^{\rm op}, \, \cH_0(g^{-1},\gt')\oplus
\cH_0(g^{-1},\gt'), \,\ovl{D}_{\cE', k}\right)$. 
\end{prop} 

\begin{proof}
The conjugate of the operator $\partial_{\cE}$ by the above 
antiunitary isometry  
is related to $\partial_{\cE'}$ by the identity  (see~\cite[p. 175]{Pol2004}): 
 \begin{align} \label{eq:push-conn}
J_{g,\gt}\circ \partial_{\cE}^*\circ J_{g,\gt}^{-1} \, &= \,  
-\frac{1}{\rk\cE(g,\gt)} \,{\partial}_{\cE'} .
\end{align}
Setting  $\, \tilde{J} = J_{g,\gt} \oplus \left(- J_{g,\gt}\right)$, 
it follows that 
\begin{align} \label{JW-conj}
\tilde{J}\tilde{W}^* D_{\cE, \varphi} \tilde{W} \tilde{J}^{-1} =
-\frac{1}{\rk\cE(g,\gt)}
\begin{pmatrix}
  0   &   k \partial_{\cE'} \\
  \partial_{\cE'}^* k & 0
\end{pmatrix} .
\end{align}
It remains to notice that $\frac{1}{\rk\cE(g,\gt)} =  \rk(\cE')$.
\end{proof}
 
The Heisenberg module underlying the above bi-spectral triple 
is the $(\At, \Atp)$--bimodule $\cE' = \cE(g^{-1},\gt')$, with $\At$ 
identified to $\End_{\Atp} (\cE')$. Moreover, as can be seen 
from \Eqref{Eq05237},  
in this picture turning on the Weyl factor $k =e^{\frac{h}{2}} \in \At$
amounts to passing to the Hermitian structure
\begin{align}  \label{k-met}
 (f'_1,f'_2)_k:=\innr{k f'_1,f'_2} , \qquad    f'_1, f'_2 \in \cE' .
\end{align}

It will be shown below that such a change uniquely determines a 
holomorphic connection on $\cE'$ which is Hermitian with respect
to the new metric. However, for notational convenience, we
momentarily switch the roles of the two Morita equivalent algebras
and phrase the uniqueness result in terms of
the original Heisenberg $(\Atp, \At)$--bimodule $\cE = \cE(g,\gt)$.
 
\pagebreak[3]
\begin{prop} \label{H-conn}
 Let  $K\in\End_{\At}(\cE)=\Atp$
be positive definite and consider the inner product 
\begin{equation}\label{Eq052315}  
 H(f,g):=\innr{K f,g}. 
\end{equation}
Then there is a unique connection $\nabla^K$ on $\cE$ such that
\begin{align}  \label{Eq052316c}
 \partial^K = \nabla_1^K+\taubar \nabla_2^K \quad
 \text{is a holomorphic structure}, 
 \end{align}
and which is Hermitian with respect to $H$, \ie satisfies
\begin{align}  \label{Eq052316b}
     \delta_j H(f_1,f_2)\, = \, H(\nabla_j^K f_1,f_2)+ H(f_1,\nabla_j^K f_2) ,
     \quad j=1,2.
  \end{align}
Moreover, if $\partial^K$ is a standard holomorphic structure then
the connection $\nabla^K$ has constant curvature iff
$K$ is a multiple of the identity.  
\end{prop}

\begin{proof} We write $\partial^K=\partial_\cE+Z$ with $Z \in \End_{\At} (\cE)$
and make the Ansatz 
\begin{equation}\label{Eq05241}  
            \nabla_j^K=  \nabla_j +\go_j , \qquad \go_j\in\Atp .
\end{equation}
Then
\begin{multline}\label{Eq05242}  
  \delta_j H(f,g)= \delta_j \innr{K f,g} = \innl{\nabla_j(Kf),g}+
  \innr{Kf,\nabla_j g}\\
  = \innl{(\delta_j'K)f,g}+ H(\nabla_j^Kf,g)+ h(f,\nabla_j^K g)
    -\innr{K \go_j f,g}-\innr{Kf,\go_j g}.
\end{multline}
Thus \Eqref{Eq052316b} is satisfied iff
\begin{equation}\label{Eq05243}  
   \delta_j'K =  K \go_j + \go_j^* K.
\end{equation}
\Eqref{Eq052316c} is equivalent to 
\begin{equation}\label{Eq05244}  
      \go_1+\taubar \go_2 = Z.     
\end{equation}
It then follows
\begin{equation}\label{Eq05245}  
 \begin{split}
   \delta_\tau'K&=K(\go_1+\taubar¸\go_2)+(\go_1^*+\taubar \go_2^*) K\\
   &= KZ+Z^*K+(\taubar-\tau)\go_2^*K.
  \end{split}  
\end{equation}
Since $\Im\tau\not=0$ this determines $\go_2$ and thus $\go_1$ uniquely.

Conversely, defining $\go_2$ by \Eqref{Eq05245} and $\go_1$
by \Eqref{Eq05244} one easily sees that \Eqref{Eq05243},
\eqref{Eq05244} and hence \Eqref{Eq052316c}, \eqref{Eq052316b}
are fulfilled.

To prove the last claim we recall, \cf\Eqref{const-curv}, that
the standard connection has constant curvature. 
The holomorphic structure being standard implies that $Z$ is
a central element of $\At$, hence $Z=z\cdot 1$. 
The curvature
of $\nabla_j^K$ is given by
\begin{equation}\label{Eq05248}  
 [\nabla_1^K,\nabla_2^K]= [\nabla_1,\nabla_2]+\delta_1' \go_2+\delta_2'\go_1.
\end{equation}
Note that in view of \Eqref{Eq05244} and $Z=z I$ we have $[\go_1,\go_2]=0$.
The curvature is constant if and only if 
\begin{equation}\label{Eq05249}  
 \delta_j'( \delta_1'\go_2+\delta_2'\go_1)=0\quad \text{for } j=1,2.
\end{equation}
Multiplying by $\rk\cE(g,\gt)^2$ we see in view of \Eqref{Eq05244} 
that this is equivalent to
\begin{equation}\label{Eq052410}  
 \delta_j(\delta_1 -¸\taubar \delta_2)\go_2=0,\qquad \text{for } j=1,2. 
\end{equation}
Writing $\go_2=\sum_{j,l\in\Z} a_{j,l} V_1^k V_2^l$ we find
\begin{equation}\label{Eq052411}  
 (\delta_1-\taubar \delta_2)\go_2 =\tpii \sum_{j,l\in \Z} (j-\taubar l) a_{j,l}
 V_1^k V_2^l. 
\end{equation}
This is constant iff $(j-\taubar l) a_{j,l}=0 $ for $(j,l)\not = (0,0)$. But
since
$\Im \tau\not=0$ this is equivalent to $\go_2$ itself being constant, say
$\go_2=\gl I$. By
\Eqref{Eq05244} $\go_1$ is constant, too. Write $\tilde \gl=z+\ovl{z}+(\taubar
-\tau)\gl$. Then in view of \Eqref{Eq05245} we see that $K$ satisfies
the equation $\delta_\tau' K=\tilde\gl K$. Expanding $K$ into its Fourier
series it then follows that $K$ is a constant multiple of the identity.

For such $K$ we have $\delta_\tau'K=0$ and \Eqref{Eq05245} implies 
$\go_2\in\C\cdot I$ and thus, by \Eqref{Eq05244}, also $\go_1\in\C\cdot I$.
\end{proof}

\subsubsection{Curvature of twisted spectral triple}  

The square of the operator \eqref{Trans-op} is
\begin{align} \notag
&\ovl{D}_{\cE', k}^2 \, = \, -\rk(\cE')^2
\begin{pmatrix}
\Lapl^{+}_{\cE', k} & 0\\
 0 &   \quad \Lapl^{-}_{\cE', k} 
\end{pmatrix} , \qquad  \text{where} \\ \label{Laps}
&\Lapl^+_{\cE', k}:=  k \partial_{\cE'} \partial_{\cE'}^* k 
\quad \text{and} \quad \Lapl^{-}_{\cE', k}:= \partial_{\cE'}^*  k^2   \partial_{\cE'} .
\end{align}

The curvature functionals are
defined, as in~\cite{ConMos2011} (see also Remark 5.4 therein),
by means of the constant term in the asymptotic expansion 
\begin{align} \label{heat-asy}
  \Tr \left(a\, e^{-t \Lapl^\pm_{\cE', k}} \right)\, \sim_{t \searrow 0} \,
  \sum_{j=0}^\infty {\rm a}_{2j}(a, \Lapl^\pm_{\cE', k})\, t^{j-1} ;
\end{align} 
specifically, the {\em curvature functional} associated to the 
twisted spectral triple $\left(A_\gt, \, \cH_0(g^{-1},\gt')\oplus
\cH_0(g^{-1},\gt'), \,\ovl{D}_{\cE', k}\right)$ is the functional
\begin{align} \label{total-curv}
\At \ni a \mapsto \cR^\pm_{\cE', k} (a) \, := \, {\rm a}_{2}(a, \Lapl^\pm_{\cE', k}) .
\end{align}
The existence of the expansion  \eqref{heat-asy}  follows from the
pseudodifferential calculus for twisted crossed products (Section
\ref{SPsiDOMul}), the resolvent expansion for pseudodifferential multipliers
(Section \ref{SResExp}, Theorems \ref{TResExp}, \ref{TbInt}) and its
realization on Heisenberg modules (Section \ref{SEffPsiDO}, Theorem
\ref{TEffHeatExp}). Theorem \ref{TEffHeatExp} shows
moreover that the  functional $\cR^\pm_{\cE', k} $ is 
given by a density $\cK^\pm_{{\cE', k}} \in \At$ 
(see \Eqref{c-density} below) with respect to the 
\MLrevision
\mpar{Referee item 3}
natural trace on $\End_{\Atp}(\cE') = \At$, namely 
\begin{align} \label{nat-tr}
\vp_{\cE'} \, := \, |\rk(\cE')| \vp_0 .
\end{align}
Adjusted by a normalization factor (to be explained below), this
density, denoted $\cK^\pm_{{\cE', k}}$, is given by
\begin{align} \label{c-density}
 \cR^\pm_{\cE', k} (a) =     \frac{1}{4\pi \Im\tau} \vp_{\cE'} (a \, \cK^\pm_{{\cE', k}})
 =    \frac{|\rk(\cE')|}{4\pi \Im\tau}\, \vp_0 (a \, \cK^\pm_{{\cE', k}}) , \qquad a \in \At.
\end{align} 

\begin{remark} \label{tpii}
When comparing with~\cite{ConMos2011} one has to remember
that our basic derivations \eqref{Eq:Lie-act} are $\tpii$ multiples of
those used therein. Furthermore, there are different conventions for the
modular functions which lead to another overall factor of $-2$ for the one
variable functions and to $-4$ for two variable functions,
\cf Sec.~\ref{SSComparison} below.   
Multiplying the above factor $\frac{1}{4\pi \Im\tau}$ by
$-4\pi^2$ gives precisely the overall factor $- \frac{\pi}{\Im\tau}$ of
the expression (3.14) in~\cite{ConMos2011}. 
\end{remark}

The first main result of the paper computes the precise expression of these
densities, thus producing formulas for the {\em local curvature} of the
twisted spectral triple $\left(A_\gt, \, \cH_0(g^{-1},\gt')\oplus
\cH_0(g^{-1},\gt'), \,\ovl{D}_{\cE', k}\right)$. 
With a slight change of
notation for the curvature-defining functions introduced in \cite[\S 3.1]{ConMos2011}, 
 the result can be stated as follows. \footnote{For the classical case of a
Riemann surface, see \cite[\S 1.5]{Bos1987}, in particular formula (1.5.4).}

\begin{theorem}[Curvature densities] \label{main-curv}
Let $h=h^* \in \At^{\rm sa} $ and $k=e^{\frac{h}{2}}$. 
Then   
\begin{equation} \label{main-a2+}
 {\rm a}_{2}(a, \Lapl^\pm_{\cE', k})  \equiv  \cR^\pm_{\cE', k} (a) =  
      \frac{|\rk(\cE')|}{4\pi \Im\tau}\, \vp_0 (a \, \cK^\pm_{{\cE', k}}) , \quad a \in \At.
\end{equation}
where  
\begin{equation} \label{loc-curv}
\begin{split}
 \cK^\pm_{\cE', k} & =  
                        K_\pm(\nabla_h)(\Lapl_\tau(h))
                      + H_\pm^\Re(\nabla_h^1,\nabla_h^2)\bl\square^\Re (h)\br \\
       & \quad +H_\pm^\Im(\nabla_h^1,\nabla_h^2)\bl (\square^\Im(h)\br
           \mp  2 \pi \, \Im\tau  \, \mu(\cE')\, 1.
\end{split}
\end{equation}  
Here, $\nabla_h  =-\operatorname{ad} h$ is the modular derivative,
$\Lapl_\tau h = \delta_\tau \delta_\tau^*$ is the Laplacian
of the complex structure (\cf \Eqref{Eq:c-struct}),
\MLrevision
\mpar{Referee item 4}
\begin{equation*}
    \square^\Re(h)  = \frac 12  (\dtau h \cdot \dtau^* h
                              + \dtau^* h \cdot \dtau h),\quad
    \square^\Im(h)  =  \frac 12 (\dtau h \cdot \dtau^* h
                              - \dtau^* h \cdot \dtau  h),
\end{equation*}
and $\nabla_h^i, \, i=1, 2$, signifies that $\nabla_h$ is acting on the
$i$--th factor.

$K_+$ ($K_-$) equals $K_{0,0}$ ($K_{0,1}$) in \Eqref{EqKeps}, $H_+^\Re$
($H_-^\Re$) equals $H_{0,0}^\Re$ ($H_{0,1}^\Re$) in \Eqref{EqHReps}
and  $H_+^\Im$ ($H_-^\Im$) equals $H_{0,0}^\Im=0$ ($H_{0,1}^\Im$) in
\Eqref{EqHIeps}.
\end{theorem}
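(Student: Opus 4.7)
The plan is to compute ${\rm a}_2(a, \Lapl^\pm_{\cE',k})$ by running the pseudodifferential machinery of Section~\ref{SPsiDOMul} for the operators $\Lapl^\pm_{\cE',k}$, extracting the relevant coefficient from the resolvent expansion of Theorems~\ref{TResExp} and~\ref{TbInt}, and then pushing the result down to the intrinsic module trace via the normalization~\eqref{nat-tr}. Throughout, the conjugation by $\tilde{W}\tilde{J}$ of Propositions~\ref{bi-spec-trip} and~\ref{TST} allows one to treat the problem entirely in the multiplier calculus on the Heisenberg module $\cE'$, where the left $\At$-action on $\cE'$ coincides with $\End_{\Atp}(\cE')$ and so the resulting density automatically lives in $\At$.

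First, I would write the full symbols of $\Lapl^+_{\cE',k} = k\partial_{\cE'}\partial_{\cE'}^* k$ and $\Lapl^-_{\cE',k} = \partial_{\cE'}^* k^2 \partial_{\cE'}$. Using $\partial_{\cE'} = \nabla'_1 + \taubar\nabla'_2$ together with the Heisenberg relation~\eqref{const-curv} and the Leibniz rule~\eqref{Eq052312}, the principal symbol is $k^2\,|\eta|^2$ with $\eta = \xi_1 + \taubar\xi_2$, plus a scalar curvature contribution $\mp 2\pi\,\Im\tau\,\mu(\cE')$ coming from $[\nabla'_1,\nabla'_2]$; the lower-order symbols encode commutators of $\xi$-derivatives with $k$ and, after conjugation, with the modular derivative $\nabla_h = -\operatorname{ad} h$. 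The constant-curvature scalar term is precisely what yields the final summand in~\eqref{loc-curv}.

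Next, I would feed these symbols into the Volterra recursion from Theorem~\ref{TResExp} for $(\Lapl^\pm_{\cE',k} + \lambda)^{-1}$, extract the symbol $b_2(\xi,\lambda)$, and apply Theorem~\ref{TbInt} to perform the $(\xi,\lambda)$ integration. Here I would use the key shortcut pointed out in the introduction: modulo $\xi$-functions of zero average, the summands of $b_2$ depend only on $|\eta|^2$, collapsing the two-dimensional $\xi$-integral to a one-dimensional one and converting expressions into divided differences of $(|\eta|^2+\lambda)^{-1}$ evaluated on the commuting operators $\nabla_h^1, \nabla_h^2$ acting on the two tensor factors. After $\lambda$-integration, these divided differences become exactly the one- and two-variable modular functions $K_{0,\epsilon}$, $H^\Re_{0,\epsilon}$, $H^\Im_{0,\epsilon}$ of Section~\ref{b-4}, with $\epsilon = 0$ for $\Lapl^+$ and $\epsilon = 1$ for $\Lapl^-$. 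The pieces of $b_2$ organize themselves by the number of commutators with $h$: terms linear in $h$ assemble into $\delta_\tau\delta_\tau^*(h) = \Lapl_\tau(h)$ weighted by $K_\pm$, while quadratic pieces split under the transposition $\nabla_h^1 \leftrightarrow \nabla_h^2$ into a symmetric part $\square^\Re(h)$ weighted by $H_\pm^\Re$ and an antisymmetric part $\square^\Im(h)$ weighted by $H_\pm^\Im$. The overall prefactor $|\rk(\cE')|/(4\pi\Im\tau)$ in~\eqref{main-a2+} is then exactly the factor converting the Hilbert-space trace on $\cH_0(g^{-1},\gt')$ into the intrinsic module trace $\vp_{\cE'} = |\rk(\cE')|\,\vp_0$ on $\End_{\Atp}(\cE') = \At$, consistent with the residue~\eqref{EqZetaConstant}.

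The main obstacle, already anticipated by the authors, is controlling the combinatorial proliferation of terms produced by the recursion for $b_2$ and verifying that they reorganize exactly into the four summands of~\eqref{loc-curv}. The crucial reduction, elaborated in Section~\ref{b-4}, is to exploit the Tomita-type modular identities relating the two-parameter family of Laplacians associated to the twisted triple to the graded Laplacian; these let one compute only one of $\Lapl^\pm_{\cE',k}$ directly and obtain the other by modular conjugation. This is precisely the step that eliminates the previously unavoidable reliance on computer algebra, and it also conceptually explains the seemingly miraculous relation between the two-variable functions $H_0$ and $H_1$ noted in~\cite{ConMos2011}.
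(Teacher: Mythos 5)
Your proposal is correct and follows essentially the same route as the paper: reduce $\Lapl^+_{\cE',k}=k\partial_{\cE'}\partial_{\cE'}^*k$ by modular conjugation to the multiplier $k^2\partial_{\cE'}\partial_{\cE'}^*$ (and $\Lapl^-$ to the $(\eps_1,\eps_2)=(0,1)$ member of the family \eqref{EqDiffMulP1}), identify the symbol $k^2|\eta|^2\pm\tfrac12 k^2 c_\tau$ with $c_\tau=4\pi\mu(\cE')\Im\tau$, and then invoke Theorems \ref{TResExp}, \ref{TbInt}, \ref{TEffHeatExp} and Corollary \ref{CA2}, with the $-k^{-2}a_0$ term producing the $\mp 2\pi\Im\tau\,\mu(\cE')$ summand and $N(\spi)=|\rk(\cE')|$ giving the prefactor. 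This matches the paper's proof, which is precisely this short assembly of the previously established machinery.
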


\begin{proof} With the modular operator $\modu := k^{-2} \cdot k^2 =
e^{\nabla_h}$ 
 we first note that 
$\Lapl^+_{\cE',k}= k \pl_{\cE'}\pl_{\cE'}^* k=
  \modu^{\frac 12}\bl k^2 \pl_{\cE'}\pl_{\cE'}^* \br$,
hence (see \ref{SSSConjArg}) 
\[
   \Tr\bl    a e^{-t \Lapl^+_{\cE',k}} \br
    = \Tr\bl  k  a k\ii  e^{-t k^2 \pl_{\cE'} \pl_{\cE'}^*} \br.
\]
We therefore apply Theorem \ref{TEffHeatExp} to the operator
$ k^2 \pl_{\cE'}\pl_{\cE'}^*$. Its symbol equals
(\cf Sec.~\ref{SSSDiffOp2} and Sec.~\ref{SSSetUp}) 
$k^2 |\eta|^2 + \frac 12 k^2 c_\tau$, where 
$c_\tau=[\pl_{\cE'},\pl_{\cE'}^*] = 4\pi \mu(\cE')\Im\tau$
is the structure constant (\cf \Eqref{EqudbarC}). 
According to Theorem \ref{TEffHeatExp} and Corollary
\ref{CA2} the constant term $\frac 12 k^2 c_\tau$ in
the symbol contributes to $a_2(a,\Lapl^+_{\cE',k})$
the summand
\[
\frac{N(\spi)}{4\pi \Im\tau} \varphi_0\bl k a k\ii (-k^{-2} \frac 12 k^2
c_\tau)\br
    = -\frac{|\rk(\cE')| \mu(\cE')}{2} \varphi_0(a)
    = -\frac{\mu(\cE')}{2} \varphi_{\cE'} (a). 
\]
Note that for the module $\cE'$ the universal constant
$N(\spi)$ (\cf Theorem \ref{TEffTrace}) equals $|\rk(\cE')|$.
The first large summand on the right of \Eqref{loc-curv} follows
from Corollary \ref{CA2} as well. The normalization constant in front
of the last expression in \Eqref{main-a2+}
is (\cf Theorems \ref{TEffTrace}, \ref{TEffHeatExp})
$\frac{N(\spi)}{4\pi\Im\tau} = \frac{|\rk(\cE')|}{4\pi\Im\tau}.$
\end{proof}

\begin{remark} 
An equivalent formulation of \Eqref{loc-curv} is 
\begin{align} \label{main-curv-comp}
 \cK^\pm_{\cE', k} \, = \,   \cK^\pm_k \, \mp \,  2 \pi \, \Im\tau \,\mu(\cE')\,1 ,
\end{align} 
where $ \cK^\pm_k$ is the intrinsic scalar curvature of $T^2_\gt$ (\cf
\cite[Sec.~4]{ConMos2011}) equipped 
with the conformal metric of Weyl factor $k=e^{\frac{h}{2}}$. Note though that
due to the change of conventions (\cf Remark \ref{tpii}) the
latter is a $-4\pi^2$ multiple of that defined in~\cite{ConMos2011}.
\end{remark} 
\medskip

\begin{cor}[Riemann-Roch density] \label{main-RR}
The Riemann-Roch density $\, \cK^\gamma_{\cE', k} =\cK^+_{\cE', k} - \cK^-_{\cE', k} $ 
is given by the formula
\begin{equation} \label{loc-curvRR}
\begin{split}
 \cK^\gamma_{\cE', k} & =  
                        K_\gamma(\nabla_h)(\Lapl_\tau(h))
                      + H_\gamma^\Re(\nabla_h^1,\nabla_h^2)\bl\square^\Re (h)\br \\
       & \quad +H_\gamma^\Im(\nabla_h^1,\nabla_h^2)\bl (\square^\Im(h)\br
           -  4 \pi \, \Im\tau  \, \mu(\cE')\, 1.
\end{split}
\end{equation}  
\end{cor}
The expressions of $K_\gamma, H_\gamma^\Re, H_\gamma^\Im$ are explicitly given
in \Eqref{EqKgamma}, \eqref{EqHRgamma}, \eqref{EqHIgamma}.

The case $g=\begin{pmatrix} 1 & 0 \\ 0 & 1 \end{pmatrix}$, \ie of the trivial ``line bundle'' with a standard holomorphic structure
was treated in~\cite{KhaMoa2014}.

\subsection{Ray-Singer determinant functional and curvature} \label{RSDetF}

\subsubsection{Ray-Singer determinant for Heisenberg bimodules}  
In the preceding section we started by performing a conformal change of metric
in the base. However, after passing to the transposed spectral triple, the
initial algebra $\At$ becomes the algebra of endomorphisms $\End_{\Atp}
(\cE')$. In the new picture, the {\em conformal change on the base} amounts to
a completely {\em general change of Hermitian metric} on the right
\mpar{Referee item 5: clarify ``completely general change''.
Well that's what it is. The bundle is in a sense one dimensional, thus
there is no difference between conformal change and general change ...}
$\Atp$--module $\cE'$.
\MLrevision
Indeed, any Hermitian structure on $\cE'$, viewed as a Hilbert C*-module over 
$\Atp$, can be obtained from the standard one by composition with a
positive $\Atp$-linear endomorphism, \ie a positive element in $\At$.

From now on we assume $\mu(\cE') > 0$.
By Proposition \ref{harmonic-osc} (1) this implies 
$$
\Ker \Lapl^+_{\cE'} = \Ker  \partial_{\cE'}^* = 0 .
$$
It then follows that $\Ker \Lapl^+_{\cE', k} = 0$ for any   
invertible $k =e^{\frac{h}{2}} \in \At^{\rm sa}$. 
Once the above choice of the sign of $\mu(\cE')$ was made, we will only deal with the
Hodge-Laplace operator on ``functions''. Accordingly,
we shall routinely omit the superscript $^+$
when referring to $\Lapl^+_{\cE', k}$, as well as to the rest of symbols
affected by the same notation. 

Having no zero modes, the zeta function of $\Lapl_{\cE', k}$ is
\begin{align*}
 \zeta_{\Lapl_{\cE', k}} (z)\, =\, \Tr \bl\Lapl_{\cE', k}^{-z}\br
 \, =\, \frac{1}{\Gamma (z)} \int_0^\infty t^{z-1}
 \,  \Tr\bl e^{-t \Lapl_{\cE', k}}\br\, dt .
\end{align*} 
Its value at $0$ is independent of the Weyl factor $k \in \At$, as shown by
the proof of \cite[Theorem 2.2]{ConMos2011}.  By Proposition
\ref{harmonic-osc} (2) this value is
\begin{equation}\label{zeta0}
\zeta_{\Lapl_{\cE', k}}(0) \, = \, \zeta_{\Lapl_{\cE'}}(0) 
\, = \, -\frac{|\deg(\cE')|}{2} .
\end{equation}
On the other hand, the corresponding Ray-Singer determinant varies and gives
rise to the functional  
\begin{align} \label{hol-tor}
\At^{\rm sa} \ni h^*=h \mapsto RS_{\cE', k} := \log \Det(\Lapl_{\cE', k}) :\, =\, 
 - \zeta^\prime_{\Lapl_{\cE', k}} (0) .
\end{align}
To obtain the variation formula for this functional, one proceeds as in
\cite[\S 4.1]{ConMos2011}. First, by \cite[Eq. (4.1)]{ConMos2011} 
\begin{align} \label{ds-zeta'}
  - \frac{d}{ds} \zeta^\prime_{\Lapl_{\cE', k^s}} (0)\, = \,
  \zeta_{\Lapl_{\cE', k^s}} (h, 0) .
\end{align}
Applying Theorem \ref{main-curv} to the right hand side one obtains
\pagebreak[3]
\begin{align}  \label{var-z'}
 - \frac{d}{ds}& \zeta^\prime_{\Lapl_{\cE', k^s}} (0) = \\ \notag
 &\frac{|\rk(\cE')|}{4\pi \Im\tau}  \vp_0 \left( h\Bl s K_+(s\nabla_h)(\Lapl(h)) 
   + s^2 H_+^\Re\bl s\nabla_h^1, s\nabla_h^2) (\square^\Re (h)\br \Br \right)\\  \notag
   & -\,\frac 12 |\rk(\cE')| \mu (\cE')\, \vp_0 (h) .
\end{align} 
Notice that
\begin{align} \label{rk--deg}
|\rk(\cE')| \mu (\cE')  =  \sgn( \rk (\cE')) \deg (\cE') =  |\deg (\cE')| ,
\end{align} 
since 
\begin{align*}
\sgn(\rk (\cE')) \sgn(\deg (\cE')) = \sgn \mu (\cE') = 1 .
\end{align*}
By integrating \Eqref{var-z'} and using the observation \eqref{rk--deg}
one obtains the variation formula
\begin{align} \label{int-var-zeta'}  
 &\log\Det(\Lapl_{\cE', k})\, =  \log \Det(\Lapl_{\cE'})\, 
-\,\frac 12 |\deg (\cE')| \, \vp_0 (h)  \\ \notag
 &\,+ \frac{|\rk(\cE')|}{4\pi \Im\tau} \int_0^1 
      \vp_0 \Bigl[ h\Bl s K_+(s\nabla_h)(\Lapl h)
        + s^2 H_+^\Re(s\nabla_h^1, s\nabla_h^2)\bl\square^\Re (h)\br\Br \Big] ds .
\end{align}        
The integrand above is carefully evaluated in \cite[Lemmas 4.2 - 4.4]{ConMos2011}.
As a result, the above identity becomes 
\begin{equation} \label{zeta'-integrand}
\begin{split}
   \log&\Det(\Lapl_{\cE', k})\, = \,\log \Det(\Lapl_{\cE'}) \, 
   - \, \frac 12 |\deg (\cE')| \, \vp_0 (h) \\
&- \frac{ |\rk(\cE')|}{16 \pi \Im\tau} \biggl(\frac{1}{3} \vp_0(h\Lapl h) +  \vp_0
 \Bl K_2(\nabla_h^1 )\bl \square^\Re(h)\br \Br\biggr),
\end{split}
\end{equation}
with  the function $K_2$ given by  
\begin{equation}\label{K2}
    K_2(2s)=\frac{1}{3}- \biggl(\frac{{\rm coth}(s)}{s} - \frac{1}{s^2}\biggr) , \quad s \in \R .
\end{equation}

\begin{theorem}[Determinant formula]
 \label{main-RS}
The exact formula for the Ray-Singer determinant corresponding to the 
change of Hermitian metric on the Heisenberg right $\Atp$--module
$\cE'$ by the factor $k = e^{\frac 12 h} \in \At^{\rm sa}$ is    
 \begin{align} \label{zeta'-exact}
\begin{split}
  \log\Det(\Lapl_{\cE', k}) & =\, \frac12 |\deg (\cE')|
   \log \big(2|\mu(\cE')| \Im(\bar\tau)\big)
   \,  - \, \frac 12 |\deg (\cE')| \, \vp_0 (h)\\
&\,- \frac{ |\rk(\cE')|}{16 \pi \Im\tau}\biggl(\frac{1}{3} \vp_0(h\Lapl h) +  \vp_0
 \Bl K_2(\nabla_h^1 )\bl \square^\Re(h)\br\Br\biggr).
\end{split}
\end{align}
\end{theorem}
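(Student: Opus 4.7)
The plan is to deduce the exact formula by combining a one-parameter variation along the family of Weyl factors $k^s := e^{sh/2}$, $s \in [0,1]$, with an explicit evaluation of the log-determinant at the base point $s=0$ provided by Proposition \ref{harmonic-osc}(2). The starting point is the Kontsevich--Vishik-type identity \eqref{ds-zeta'}, which gives
\begin{equation*}
\frac{d}{ds}\log\Det(\Lapl_{\cE',k^s}) \,=\, -\frac{d}{ds}\zeta'_{\Lapl_{\cE',k^s}}(0) \,=\, \zeta_{\Lapl_{\cE',k^s}}(h,0) \,=\, {\rm a}_{2}(h,\Lapl_{\cE',k^s}),
\end{equation*}
and thus identifies the derivative with the curvature coefficient computed in Theorem \ref{main-curv}.

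Invoking that theorem with the substitution $\nabla_h \mapsto s\nabla_h$ (which accounts for replacing $k$ by $k^s$) yields the explicit $s$-dependent integrand of \eqref{var-z'}. The $H_+^\Im$ contribution drops by symmetry after pairing with $h$ through $\vp_0$ since $\square^\Im(h)$ is antisymmetric in the two derivations and $\vp_0$ is tracial. Using the identity $|\rk(\cE')|\mu(\cE')=|\deg(\cE')|$ (a consequence of $\mu(\cE')>0$), integrating in $s$ from $0$ to $1$ produces the variation formula \eqref{int-var-zeta'}. To simplify the resulting two-variable integral one invokes \cite[Lemmas 4.2--4.4]{ConMos2011}: these tracial/modular identities collapse the combined expression $\int_0^1 [sK_+(s\nabla_h)(\Lapl_\tau h) + s^2 H_+^\Re(s\nabla_h^1,s\nabla_h^2)(\square^\Re h)]\,ds$, paired with $h$ through $\vp_0$, into the single-variable expression $\tfrac13\vp_0(h\Lapl h) + \vp_0(K_2(\nabla_h^1)(\square^\Re(h)))$ with $K_2$ as in \eqref{K2}, yielding \eqref{zeta'-integrand}.

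It remains to evaluate the constant of integration, $\log\Det(\Lapl_{\cE'}) = -\zeta'_{\Lapl_{\cE'}}(0)$, which by Proposition \ref{harmonic-osc}(2) and equation \eqref{Eq:zetaprime0} equals $\tfrac12|\deg(\cE')|\log(2|\mu(\cE')|\Im\bar\tau)$ (in the paper's conventions regarding the conjugation of $\tau$). Substituting this into \eqref{zeta'-integrand} gives the stated closed form. The main obstacle in the argument is the algebraic collapse invoked in the second paragraph: reducing the two-variable modular expression in $H_+^\Re$, together with the one-variable $K_+$ contribution, to a single function $K_2$ acting on $\square^\Re(h)$. This reduction exploits the derivation property of $\nabla_h$, the tracial invariance of $\vp_0$, and delicate identities between $K_+$ and $H_+^\Re$; fortunately, these identities have already been established in the conformal case \cite[\S 4.2]{ConMos2011}, and the Heisenberg-module setting introduces only the overall normalization factor $|\rk(\cE')|/(4\pi\Im\tau)$, which commutes through all the tracial manipulations.
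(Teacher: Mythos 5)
Your argument is correct and follows essentially the same route as the paper: the variation formula \eqref{ds-zeta'} combined with Theorem \ref{main-curv} gives \eqref{var-z'}, integration in $s$ together with $|\rk(\cE')|\mu(\cE')=|\deg(\cE')|$ yields \eqref{int-var-zeta'}, the reduction to the single function $K_2$ is delegated to \cite[Lemmas 4.2--4.4]{ConMos2011} exactly as in the text, and the constant of integration is supplied by Proposition \ref{harmonic-osc}(2) via \eqref{Eq:zetaprime0}. Your explicit justification for discarding the $H_+^\Im$ term (antisymmetry of $\square^\Im(h)$ paired with the traciality of $\vp_0$) is a detail the paper leaves implicit, and is correct.
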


\begin{proof} The stated formula follows from \Eqref{zeta'-integrand}, using
\Eqref{Eq:zetaprime0} to 
express the value of the log-determinant for the constant curvature metric.
\end{proof}

\subsubsection{Extremal of Ray-Singer determinant functional}

We take the point of view the Ray-Singer determinant is a functional
on the positive cone of Hermitian metrics on the $\Atp$--module $\cE'$.
Actually, we shall regard it as a functional on the space $\At^{\rm sa}$,
by composing with the map $ \At^{\rm sa} \ni h \mapsto k=e^{\frac{h}{2}}$.

Under the rescaling $h \mapsto h+\epsilon$ one has
\begin{align*}
\begin{split}
\zeta^\prime_{\Lapl_{\cE', ke^{\epsilon/2}}} (0) &= \,  \frac{d}{dz}\mid_{z=0}
\left(e^{-\epsilon z}  \zeta_{\Lapl_{\cE, k'}} (z)\right) 
= \,  \zeta^\prime_{\Lapl_{\cE', k}} (0)\, - \, \epsilon  \zeta_{\Lapl_{\cE', k}}(0)   \\
&=\, \zeta^\prime_{\Lapl_{\cE', k}} (0)\, + \,  \epsilon  \frac{|\deg(\cE')|}{2} \, .
\end{split}
\end{align*}
 It follows that the modified functional 
(analogous to that in~\cite{OPS1988})
 \begin{align} \label{OPS}
F_{\cE'} (h) \, =\, - \log \Det(\Lapl_{\cE', k}) 
\, - \, \frac12 |\deg(\cE')| \vp_0 (h) 
\end{align}
is scale invariant. 
The identity \eqref{zeta'-integrand} 
yields 
\begin{align*}  
\begin{split}
F_{\cE'} (h) &=\, -\frac12 |\deg(\cE')| \vp_0 (h) \,- \,\log \Det(\Lapl_{\cE'}) \, 
   +\,\frac 12 |\deg (\cE')| \, \vp_0 (h)   \\
&\,+ \frac{ |\rk(\cE')|}{16 \pi \Im\tau}\left(\frac{1}{3} \vp_0(h\Lapl h) +  \vp_0
 \left(K_2(\nabla_h^1 )(\square^\Re(h))\right)\right) ,
\end{split}
\end{align*}
which after cancelation becomes
\begin{align}  \label{OPS-ex}
\begin{split}
F_{\cE'} (h) =&\, - \,\log \Det(\Lapl_{\cE'}) \\
&\, + \frac{ |\rk(\cE')|}{16 \pi \Im\tau}\left(\frac{1}{3} \vp_0(h\Lapl h) +  \vp_0
 \left(K_2(\nabla_h^1 )(\square^\Re(h))\right)\right) .
\end{split}
\end{align}

\begin{theorem}[Extremal value] \label{main-YM} 
The scale invariant Ray-Singer determinant  
$k^2 \mapsto F_{\cE'} \big(\log (k^2)\big)$, viewed as a
functional on the (positive cone of) metrics on the Heisenberg left
$\At$--module $\cE'$, attains its minimum only at the metric whose
corresponding unique metric connection compatible with the holomorphic
structure has constant curvature.
\end{theorem}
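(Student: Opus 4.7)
The plan is to read the extremum problem directly off the explicit formula \eqref{OPS-ex}. I write
\begin{equation*}
F_{\cE'}(h) \,=\, -\log\Det(\Lapl_{\cE'}) \,+\, \frac{|\rk(\cE')|}{16\pi\Im\tau}\,G(h),\quad
G(h) \,:=\, \tfrac13\vp_0(h\Lapl h) + \vp_0\bigl(K_2(\nabla_h^1)(\square^\Re(h))\bigr).
\end{equation*}
The first summand is a constant independent of $h$, and the prefactor of $G(h)$ is strictly positive since $\Im\tau>0$. Hence the extremum of $F_{\cE'}$ is controlled entirely by that of $G$, and the theorem will follow once I establish that $G\geq 0$ on $\At^{\rm sa}$ with equality exactly on the line $\R\cdot 1$, and then interpret this condition geometrically via Proposition \ref{H-conn}.

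For the first summand, $\Lapl=\Lapl_\tau$ is a non-negative self-adjoint operator on $\cH_0(\At)$ whose kernel consists of the scalars $\C\cdot 1$, so $\vp_0(h\Lapl h)=\|\Lapl^{1/2}h\|^2\geq 0$ for $h=h^*$, with equality iff $h\in\R\cdot 1$. For the second summand, the essential input is the partial-fraction expansion
\begin{equation*}
K_2(2s)\,=\,\sum_{n=1}^\infty\frac{2 s^2}{(n\pi)^2\bigl(s^2+n^2\pi^2\bigr)},
\end{equation*}
which shows $K_2\geq 0$ on $\R$ and vanishes only at $s=0$. Combined with the self-adjointness of the modular derivative $\nabla_h=-\operatorname{ad}(h)$ on $\cH_0(\At)$, the quadratic-form structure of $\square^\Re(h)$ in the factors $\dtau h$ and $\dtau^* h$, and cyclicity of $\vp_0$, this forces $\vp_0\bigl(K_2(\nabla_h^1)(\square^\Re(h))\bigr)\geq 0$, exactly as in the untwisted computation of \cite[Theorem 4.6]{ConMos2011}. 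Both summands of $G$ are therefore non-negative, and the first one alone suffices to pin down the equality locus: if $G(h)=0$ then $h\in\R\cdot 1$; conversely every such $h$ gives $\dtau h=0$, whence $\square^\Re(h)=0$ and $G(h)=0$.

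Finally, I translate this algebraic conclusion into the geometric statement. The condition $h\in\R\cdot 1$ is equivalent to $k=e^{h/2}\in\R_+\cdot 1\subset\At=\End_{\Atp}(\cE')$; by \eqref{k-met}, such a $k$ rescales the standard Hermitian structure on the right $\Atp$-module $\cE'$ by a positive real scalar. Applying Proposition \ref{H-conn} with the Heisenberg bimodule $\cE'$ in place of $\cE$ and $K=k$, this scalar case is precisely the one for which the unique metric connection compatible with the standard holomorphic structure $\partial_{\cE'}$ has constant curvature. This identifies the minimizers of $F_{\cE'}$ with the single $\R_+$-orbit of Hermitian metrics whose associated compatible connection has constant curvature, in accordance with the scale invariance of the functional. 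The main obstacle is establishing the non-negativity of the $K_2$-term with the correct sign: combining the partial-fraction expansion, the self-adjointness of $\nabla_h$, and trace cyclicity in a single computation requires some care, but this is the only analytic input needed beyond the already-derived identity \eqref{OPS-ex}.
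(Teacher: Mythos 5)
Your proposal is correct and follows essentially the same route as the paper: read the extremal problem off the closed formula \eqref{OPS-ex}, invoke the positivity of $\tfrac13\vp_0(h\Lapl h)+\vp_0\bigl(K_2(\nabla_h^1)(\square^\Re(h))\bigr)$ established in the proof of \cite[Theorem 4.6]{ConMos2011}, and identify the equality locus with the constant-curvature metric via Proposition \ref{H-conn}. Your added details (the partial-fraction expansion showing $K_2\ge 0$, and the observation that the equality locus is the full scaling orbit $h\in\R\cdot 1$ rather than just $h=0$) are accurate and consistent with the scale invariance of $F_{\cE'}$.
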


\begin{proof} This follows from \Eqref{OPS-ex} and the crucial positivity
result established in the proof of \cite[Theorem 4.6]{ConMos2011}
\begin{align*}
\frac{1}{3} \vp_0(h\Lapl h) +  \vp_0
 \left(K_2(\nabla_h^1 )(\square^\Re(h))\right) \geq 0 ,
\end{align*}
for all $h^* = h \in \At$, with the equality holding only for $h=0$.
\end{proof}

Recalling that  by~Prop.~\ref{H-conn} the standard holomorphic structure and the 
Hermitian metric uniquely determines the metric connection,
the above result places
the (spectral) Ray-Singer determinant functional in a role similar to that of
the (local) Yang-Mills functional, albeit restricted to connections compatible
with the holomorphic structure.

\subsubsection{Gradient of Ray-Singer determinant}
\label{SSSGradDet}
  
In this section we recover the intrinsic curvature of the conformal metric on the base
from the gradient of  the Ray-Singer determinant functional associated to a Heisenberg bimodule.

We define the gradient of the functional $F_{\cE'}$ by the equation 
\begin{multline} \label{gradF}
\langle \grad_h F_{\cE'} , a \rangle_ {\cE'} \equiv 
 \frac{|\rk(\cE')|}{4\pi \Im\tau}\,\vp_0(a\cdot  \grad_h F_{\cE'}) \\:=
\frac{d}{d\epsilon}\big|_{\epsilon=0}F_{\cE'}(h + \epsilon a) , \quad
 a = a^\ast \in \At^\infty .
\end{multline}
Its explicit expression can be computed as in \cite[\S 4.2]{ConMos2011}.
Indeed, in the absence of zero-modes, the calculation in the proof of 
\cite[Thm.~4.8]{ConMos2011} gives for the gradient of the Ray-Singer functional
\begin{align} \label{RSF}
RS_{\cE'} (h) \, :=\, -  \zeta^\prime_{\Lapl_{\cE', k}} (0) 
\end{align}
the following formula
\begin{align} \label{4.8-analog}
\frac{d}{d\epsilon} \bigg|_{\epsilon=0}  RS_{\cE'}(h + \epsilon a)
\,= \, \frac{1}{2}\,{\rm a}_2 \left(\int_{-1}^1 e^{\frac{u h}{2}} \, 
 a \, e^{-\frac{uh}{2}} \, du , \, \Lapl_{\cE', k}\right) ,  \qquad a \in \At .
\end{align}
From the definition \eqref{OPS} it follows that
\begin{align*} 
\frac{d}{d\epsilon}\big|_{\epsilon=0}F(h + \epsilon a)
 \,= \,- \frac{1}{2}\,{\rm a}_2 \left(\int_{-1}^1 e^{\frac{u h}{2}} \, 
 a \, e^{-\frac{uh}{2}} \, du , \, \Lapl_{\cE', k}\right) 
 \, - \, \frac12 |\deg (\cE')| \vp_0(a) .
\end{align*}
Using \Eqref{rk--deg} and the above definition of the gradient, this yields
\pagebreak[3]
\begin{multline} \label{preGr}
 \frac{|\rk(\cE')|}{4\pi \Im\tau}\,\vp_0(a \cdot \grad_h F_{\cE'}) \\
= - \frac{1}{2}\,{\rm a}_2 \left(\int_{-1}^1 e^{\frac{u h}{2}} 
 a e^{-\frac{uh}{2}} \, du , \, \Lapl_{\cE', k}\right) 
 -  \frac12 \mu(\cE') |\rk(\cE')| \vp_0 (a)  .
\end{multline}
By Theorem \ref{main-curv}, \Eqref{main-a2+}, one has
\begin{align*} 
\begin{split}
 {\rm a}_2 \left(\int_{-1}^1 e^{\frac{u h}{2}} 
 a e^{-\frac{uh}{2}} du , \, \Lapl_{\cE', k}\right)  
&=    \frac{|\rk(\cE')|}{4\pi \Im\tau}\, \vp_0 \left(a \cdot \int_{-1}^1 e^{\frac{u h}{2}} 
 \cK_{\cE', h}  e^{-\frac{uh}{2}} du\right) \\
&=  \,  \frac{|\rk(\cE')|}{4\pi \Im\tau}\, \vp_0\left(a \cdot \tilde\cK_{\cE', k} \right) ,
 \end{split}
\end{align*}
where we have denoted
\begin{align} \label{tilde-curv}
\tilde\cK_{\cE', k} := 
 \int_{-1}^1 e^{\frac{u h}{2}}  \cK^+_{\cE', k}  e^{-\frac{uh}{2}} du \,
 \equiv \, 2 \frac{\sinh (\nabla/2)}{\nabla/2} \left( \cK^+_{\cE', k}\right) .
 \end{align}

\begin{theorem}[Gradient formula] \label{thm-grad-F}
\begin{align} \label{grad-F-K}
 \grad_h F_{\cE'} \,= \,  -\frac12 \, \tilde\cK_{k} .
  \end{align}
 \end{theorem}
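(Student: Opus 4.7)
The plan is to start from the intermediate identity \eqref{preGr} already in place and simply feed in Theorem~\ref{main-curv}. Applied to the self-adjoint argument $\int_{-1}^1 e^{uh/2} a e^{-uh/2}\,du$, that theorem gives
\[
  {\rm a}_2\!\Big(\textstyle\int_{-1}^1 e^{uh/2} a\, e^{-uh/2}du,\, \Lapl_{\cE',k}\Big)
  = \frac{|\rk(\cE')|}{4\pi\Im\tau}\,\vp_0\Big(\textstyle\int_{-1}^1 e^{uh/2} a\, e^{-uh/2}du \cdot \cK^+_{\cE',k}\Big).
\]
Using the trace property of $\vp_0$ (which allows one to push $e^{\pm uh/2}$ across) and the definition of $\tilde\cK_{\cE',k}$ in \eqref{tilde-curv}, one rewrites the right hand side as $\frac{|\rk(\cE')|}{4\pi\Im\tau}\,\vp_0(a\cdot \tilde\cK_{\cE',k})$, as already indicated above Theorem~\ref{thm-grad-F}.

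Next I would invoke Remark~\eqref{main-curv-comp}, namely the decomposition $\cK^+_{\cE',k} = \cK^+_k - 2\pi\Im\tau\,\mu(\cE')\cdot 1$. Since the scalar term commutes with $e^{\pm uh/2}$ and $\int_{-1}^1 du = 2$, this gives
\[
  \tilde\cK_{\cE',k} \;=\; \tilde\cK_k \;-\; 4\pi\,\Im\tau\,\mu(\cE')\cdot 1.
\]
Substituting this into \eqref{preGr} produces, besides the desired main term $-\tfrac12\cdot\frac{|\rk(\cE')|}{4\pi\Im\tau}\vp_0(a\,\tilde\cK_k)$, an extra contribution
\[
  -\tfrac12\cdot\frac{|\rk(\cE')|}{4\pi\Im\tau}\cdot\bigl(-4\pi\Im\tau\,\mu(\cE')\bigr)\vp_0(a)
  \;=\; \tfrac12\,|\rk(\cE')|\,\mu(\cE')\,\vp_0(a),
\]
which cancels exactly against the term $-\tfrac12\,\mu(\cE')|\rk(\cE')|\vp_0(a)$ still present on the right of \eqref{preGr}.

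After cancellation one is left with
\[
  \tfrac{|\rk(\cE')|}{4\pi\Im\tau}\,\vp_0\bigl(a\cdot \grad_h F_{\cE'}\bigr)
  \;=\; -\tfrac12 \cdot \tfrac{|\rk(\cE')|}{4\pi\Im\tau}\,\vp_0(a\cdot \tilde\cK_k)
\]
for every self-adjoint $a \in \At^\infty$. Faithfulness of $\vp_0$ on $\At$ then yields the claimed identity $\grad_h F_{\cE'} = -\tfrac12\,\tilde\cK_k$.

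The only ``real'' step is the cancellation in the previous paragraph, and I expect this to be the conceptually most delicate point: it is precisely the mechanism that enforces the Morita-invariance advertised in the introduction. The $\mu(\cE')$-term in $\cK^+_{\cE',k}$ encodes the constant-curvature contribution coming from the Heisenberg connection on $\cE'$, while the extra $-\tfrac12|\deg(\cE')|\vp_0(a)$ that appears in the gradient comes from the $\zeta(0)$-correction built into the scale-invariant functional $F_{\cE'}$ (and equals $-\tfrac12|\rk(\cE')|\mu(\cE')\vp_0(a)$ by \eqref{rk--deg}). The miracle — already hinted at by the choice of the scale-invariant normalisation \eqref{OPS} — is that these two $\cE'$-dependent scalars match up with opposite signs and annihilate each other, leaving behind the \emph{same} density $\tilde\cK_k$ that governs the untwisted conformal case.
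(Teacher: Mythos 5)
Your proposal is correct and follows essentially the same route as the paper: starting from \eqref{preGr}, rewriting the heat coefficient via Theorem \ref{main-curv} and the trace property as $\frac{|\rk(\cE')|}{4\pi\Im\tau}\vp_0(a\,\tilde\cK_{\cE',k})$, and then using \eqref{main-curv-comp} so that the constant $-4\pi\Im\tau\,\mu(\cE')$ shift in $\tilde\cK_{\cE',k}$ cancels the $-\frac12\mu(\cE')|\rk(\cE')|\vp_0(a)$ term. The only difference is cosmetic ordering of the substitution and cancellation.
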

 
 \begin{proof} 
 With the above notation the identity \eqref{preGr} becomes
 \begin{align*} 
 \frac{|\rk(\cE')|}{4\pi \Im\tau}\, \vp_0 (a\,  \grad_h F_{\cE'}) 
 \,= \,-  \frac{|\rk(\cE')|}{8\pi \Im\tau}\,\vp_0 \left(a \cdot \tilde\cK_{\cE', k} \right) 
 \, - \,  \frac12 \mu(\cE') |\rk(\cE')| \vp_0 (a)  .
\end{align*}
 Hence
 \begin{align*}  
  \grad_h F_{\cE'} \,= \, -\frac12  \, \tilde\cK_{\cE', k}  \, - \,
2 \pi \, \Im\tau \, \mu (\cE') 1.
  \end{align*}
 By  \Eqref{main-curv-comp}, the right hand side is equal to 
 \[ 
 -\frac12 \left(\tilde\cK_{k}  - 2 \pi \, \Im\tau \, \mu (\cE') 2 \frac{\sinh (\nabla/2)}{\nabla/2} (1) \right)
 - 2 \pi \, \Im\tau \, \mu (\cE') 1 \, =\,
-\frac12  \, \tilde\cK_{k} \,  .\qedhere
  \]
  \end{proof}
 
Now Theorem 4.8 in \cite{ConMos2011} states
that for the case of trivial coefficients,
\begin{align*} 
 \grad_h F \,= \,  -\frac12 \, \tilde\cK_{k} .
  \end{align*}
 Thus, the above result can be rephrased as follows.
 
 \begin{cor}[Morita invariance]
\begin{align*}  
 \grad_h F_{\cE'} \,= \,    \grad_h F .
  \end{align*}  
 \end{cor}  
 
 If one adopts the point of view that the  {\em Gaussian curvature} is, by definition, 
  {\em the gradient of the Ray-Singer determinant functional},  
  the above identity establishes both its Morita invariance and
  its independence of the $\operatorname{Spin}^c$--structure.  
  Indeed, the {\em non-twisted} spectral triples
in Proposition \ref{bi-spec-trip} or Proposition \ref{TST}
 define noncommutative metric structures on $A_{\gt'}$ in the sense of
 Connes' foundational axioms~\cite{Con1996}. These are obtained by coupling
 the standard Dirac spectral triple with arbitrary Hermitian metrics on the 
 ``bundles''  $\cE'$, which are automatically as in \Eqref{k-met}, for some positive, 
 invertible $k \in \End_{\cA_{\gt'}}(\cE') = \At$.
 (In the case of 
commutative tori, by Connes' reconstruction theorem~\cite{Con2013}, they
actually exhaust all the $\operatorname{Spin}^c$--Dirac operators on such tori.)
What we have proved is that the Gaussian curvature of such a metric on the 
noncommutative torus $A_{\gt'}$ 
coincides with the intrinsic metric on $A_\gt$ corresponding to the same 
$k \in \At$, and in particular is independent on the  
``$\operatorname{Spin}^c$-structure'' $\cE'$.

\section{Pseudodifferential multipliers and symbol calculus}
\label{SPsiDOMul}

\subsection{Twisted crossed product}
To establish an appropriate pseudodifferential calculus on Heisenberg
modules we will need an extension to twisted crossed products of
Connes' \cite{Con:CAG} and Baaj's \cite{Baa:CPDII,Baa:CPDI}
pseudodifferential calculus for $C^*$--dynamical systems.
We therefore consider a $C^*$--dynamical system $(\sA,\R^n,\ga)$ with
unital $\sA$. That is $\ga$ is a strongly continuous action of the additive
group $\R^n$ as automorphisms on the $C^*$--algebra $\sA$.
Furthermore, let $B=(b_{kl})_{k,l=1}^n$ be a
skew-symmetric real $n\times n$--matrix.  Put 
\begin{equation}\label{Eq1206041}  
      e(x,y):= e^{i\gs(x,y)}= e^{i \inn{Bx,y}}, \quad \gs(x,y):= \inn{Bx,y}.
\end{equation}
The skew-symmetry of $B$ implies $e(x,x)=1$ which will be used silently
many times.
Our main examples are the $C^*$--dynamical systems associated to projective
representations of $\R^2$ on the modules $\cE(g,\gt)$ as described in Section 
\ref{SSSBasicHeisenberg} and in the Appendix \ref{AppA}. The bilinear
form $\gs$ in \Eqref{Eq1206041} corresponds to $\mu(g,\gt) \gs$ in 
\Eqref{EqProRepSymForm}, cf.~also \Eqref{Eq:1209062}, with 
\begin{equation}\label{eq:1209191}  
 B=\pi\, \mu(g,\gt)  \twodimJ, \quad
 b_{21}=-b_{12}=\pi\, \mu(g,\gt).
\end{equation}

For $a\in\Ainf$ and a multiindex $\gamma\in\Z_+^n$ we put as usual
\begin{equation}\label{EqDeltaOnA}
  \gd^\gamma  a := i^{|\gamma|} \pl_x^\gamma\bigl|_{x=0} \ga_x(a)=
   i^{-|\gamma|} \pl_x^\gamma\bigl|_{x=0} \ga_{-x}(a).
\end{equation}
$\gd^\gamma$ plays the role of the partial derivative
$i^{-|\gamma|}\frac{\pl^\gamma}{\pl x^\gamma}$. $\Ainf$ together with
the seminorms $\|a\|_\gamma:= \|\gd^\gamma a\|$ is a Fr{\'e}chet
space. The Schwartz space $\SRnA$ is then defined as usual or as the
projective tensor product of the (scalar) Schwartz space $\sS(\R^n)$
and $\Ainf$.

For the following see also Appendix \ref{SSApp2}. 
$\SRnA$ is a pre-$C^*$--module with inner product
\begin{equation}\label{Eq1206042}  
      \inn{f,g}=\int_{\R^n} f(x)^* g(x)dx.    
\end{equation}
Put 
\begin{align} 
  (a \, f)(x) &= \ga_{-x} (a) f(x)\,, \qquad a \in \Ainf ; \\
  (\uU_y f)(x) &=  e(x,-y) f(x-y).
\end{align}

$\uU_y, y\in\R^n$, is a projective family of unitaries which implements the
group of automorphisms $\ga_y, y\in\R^n$:
\begin{align}
     \uU_x^* &=\uU_{-x}, \quad \uU_x\, \uU_y= e(x,y) \uU_{x+y}, \quad x,y\in\R^n,\\
     \uU_x a \uU_{-x}&= \ga_x(a), \quad a\in\sA^\infty.
\end{align}
By associating to $f\in\SRnA$ the multiplier $M_f=\int_{\R^n} f(x) \uU_x dx$
the space $\SRnA$ becomes a $*$--algebra.
Explicitly, $M_f\circ M_g = M_{f * g}$ and $M_f^*=M_{f^*}$, where
\begin{equation}\label{EqTwistedConvolution}  
   \begin{split}
      f^*(x)    &= \ga_x\bigl( f(-x)^*\bigr), \\
      (f*g)(x)  &= \int_{\R^n} f(y) \ga_y\bigl( g(x-y)\bigr) e(y,x) dy.
   \end{split}   
\end{equation}
Note that the formula for $f^*$ is the same as in the untwisted case.
If we want to emphasize the dependence of the product
on the twisting $\gs$ we write $*_{\gs}$.

\subsubsection{Dual Trace on $\SRnA$}\label{SSSDualTrace} 
If $\psi$ is an $\ga$--invariant (finite) trace on $\sA$ then the
\emph{dual trace} $\psihat$ on $\SRnA$ is given by
\begin{equation}\label{EqDualTrace}  
 \psihat(f)= \psi\bl f(0)\br=\int_{\R^n} \psi\bl\hat{f}(\xi)\br
 \dxi, \quad \dxi= (2\pi)^{-n} d\xi.        
\end{equation}
Note that $\dxi$ is the Plancherel measure of the dual group $(\R^n)^\wedge$
w.r.t. the duality pairing $(x,\xi)\mapsto e^{i\inn{x,\xi}}$.

\subsection{Pseudodifferential multipliers}
\label{SSPsiDO}
The symbol spaces (of H\"ormander type $(1,0)$) $\SmA$
are defined as in \cite{Con:CAG,Baa:CPDII,Baa:CPDI}.
We will also consider classical ($1$--step polyhomogeneous) symbols
$f\in\CSmA$ which have an asymptotic expansion
\[
  f\sim \sum_{j=0}^\infty f_{m-j}
\]
with
$f_{m-j}(\gl\xi)=\gl^{m-j}\cdot f_{m-j}(\xi), |\xi|\ge 1, \gl\ge 1$. 
Additionally, for the resolvent expansion we will need to consider parameter
dependent symbols $\operatorname{S}^m(\R^n\times \Gamma,\sA^\infty)$ and
corresponding pseudodifferential multipliers. This calculus is a little more
subtle than the name suggests. The parameter dependent class is more
restricted than just having operator families depending on a parameter. It is
designed to analyze the resolvent expansion.  Here $\Gamma$ is an open conic
subset of the complex plane and the parameter $\gl\in\Gamma$ is (for the
resolvents of second order operators) treated as a variable of degree $2$.
This is in complete analogy to \cite[\S 9]{Shu:POS}.  For a survey, see
\cite[\S 3]{Les:PDO}.   

To motivate the definition of pseudodifferential multipliers we calculate for
Schwartz functions $f,u\in\SRnA$, with $f^\vee$ denoting the inverse 
Fourier transform of $f$, 
\begin{align}
  (M_{{f^\vee}}&u)(x)
         := \Bigl( \int_{\R^n} {f^\vee}(y) \uU_y u \,
                  dy\Bigr)(x)\label{EqPsiDO}\\
         &= \int_{\R^n} \ga_{-x}({f^\vee}(y)) u(x-y) e(x,-y) \, dy
         = \int_{\R^n} \ga_{-x}({f^\vee}(x-y)) u(y) e(x,y) \, dy
                  \nonumber \\
         &= \int_{\R^n}\int_{\R^n} e^{i \inn{x-y,\xi -Bx}} \ga_{-x}( f(\xi))
  u(y) dy\,\dsl\xi\label{EqPsiDOa}\\
         &= \int_{\R^n} e^{i\inn{x,\xi}} \ga_{-x}(f(\xi)) \hat u (\xi-Bx) \,
         \dsl\xi    
         = \int_{\R^n} e^{i\inn{x,\xi}} \ga_{-x}(f(\xi+Bx)) \hat u(\xi) \,
         \dsl\xi \label{EqPsiDOb}\\
         & =: (P_f u)(x).\label{eq.rev2}
\end{align}
The last three integrals exist (\Eqref{EqPsiDOa} as iterated integral)
also if $f\in\SmA$ is a symbol of order $m$. We call the so defined
multiplier $P_f$ a \emph{(twisted) pseudodifferential multiplier with
symbol} $f$. 

As in the untwisted case the symbol can be recovered from
the multiplier.  Namely, for $t\in\R, b\in\sA^\infty$ we have
\begin{equation}\label{Eq1206045}  
  e^{-i\inn{t,x}} \bigl( P_f e^{i\inn{t,\cdot}} b \bigr)(x)
     =\ga_{-x}(f(t+Bx))\, b.  
\end{equation}
$f(t)$ is recovered by setting $x=0$ and $b=1_{\sA}$.

\begin{dfn}\label{DefPsiDO} 
By $\pdo^m_\gs(\R^n,\Ainf)$ we denote the space of
pseudodifferential multipliers of the form  \Eqref{EqPsiDOa}, \eqref{EqPsiDOb}. 
$f=:\sigma(P_f)$
 is called the symbol of $P_f$. The spaces of \emph{classical} symbols and
 multipliers are denoted by a decorator $\operatorname{C}$ in front, \ie
 $\CSmA$ resp. $\CL^m_\gs(\R^n,\Ainf)$.
\end{dfn} 

The space of twisted (classical) pseudodifferential multipliers is a
$*$--algebra:

\begin{theorem} For symbols $f\in\SmA, g\in \SmAvar{m'}$
the composition $P_f\circ P_g$ is a pseudodifferential multiplier with
symbol $h\in\SmAvar{m+m'}$. $h$ has the following asymptotic expansion
\begin{equation}\label{EqPsiDOproduct}  
  h(t)\sim \sum_{\gamma} \frac{i^{-|\gamma|}}{\gamma!} (\pl^\gamma f)(t)
  \pl_y^\gamma\big\vert_{y=0} \Bl \ga_{-y}\bl g(t+By)\br\Br.
\end{equation}

Furthermore, $P_f^*$ is a pseudodifferential multiplier with symbol
\begin{equation}\label{EqSymAdjoint}  
 \sigma(P_f^*)\sim \sum_\grg \frac{1}{\grg!} \pl_t^\grg \delta^\grg f(t)^*.
\end{equation}
\end{theorem}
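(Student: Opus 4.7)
The plan is to compute both symbols by means of the recovery formula \eqref{Eq1206045}, which expresses $\sigma(P)(t)$ through the action of $P$ on the generating family of twisted plane waves $\{e^{i\inn{t,\cdot}} b\}_{t\in\R^n,\,b\in\sA^\infty}$. This converts the problem into the analysis of two oscillatory integrals, which are then handled by the standard Kumano-go technique: Taylor expand the slowly varying amplitude in the group variable and integrate by parts in the dual variable.

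For the composition, I would first use \eqref{Eq1206045} to compute $(P_g e^{i\inn{t,\cdot}} b)(x) = e^{i\inn{t,x}}\ga_{-x}\bl g(t+Bx)\br b$. Substituting this into \eqref{EqPsiDO} and changing $y \mapsto x-y$, then inserting $f^\vee(x-y) = \int e^{i\inn{x-y,\eta}} f(\eta)\,\dsl\eta$, the cocycle factor $e(x,y) = e^{i\inn{Bx,y}}$ combines with the plane wave through the shift $\eta \mapsto \eta + t + Bx$ (the boundary term $e^{i\inn{x,Bx}}$ vanishes by skew-symmetry of $B$). Setting $x=0$ produces
\begin{equation*}
h(t) \;=\; \int\!\!\int e^{i\inn{y,\,t-\xi}}\, f(\xi)\,\ga_{-y}\bl g(t+By)\br \,\frac{d\xi\,dy}{(2\pi)^n}.
\end{equation*}
Taylor expanding $y\mapsto \ga_{-y}\bl g(t+By)\br$ at $y=0$ up to order $N$, and using $y^\grg e^{i\inn{y,t-\xi}} = i^{|\grg|}\pl_\xi^\grg e^{i\inn{y,t-\xi}}$ followed by integration by parts in $\xi$ and Fourier inversion, transforms the $\grg$-th term into $\frac{i^{-|\grg|}}{\grg!}(\pl^\grg f)(t)\cdot \pl_y^\grg|_{y=0}\Bl \ga_{-y}\bl g(t+By)\br\Br$, producing exactly \eqref{EqPsiDOproduct}.

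For the adjoint, I would test $P_f^*$ against $v = \gd_0\otimes b$ in the $\sA$-valued pairing $\inn{u,v} = \int u(x)^*v(x)\,dx$ of \eqref{Eq1206042}. A direct computation from \eqref{EqPsiDOb} gives $(P_f v)(x) = \ga_{-x}(f^\vee(x))\,b$ (the factor $e^{-i\inn{x,Bx}}$ dropping out), and comparing with the defining identity of $P_f^*$ yields $\gs(P_f^*)(t)^* = \int e^{-i\inn{t,x}}\ga_{-x}(f^\vee(x))\,dx$. Substituting the inverse Fourier representation of $f^\vee(x)^*$ gives an oscillatory double integral $\int\!\!\int e^{i\inn{x,t-\xi}}\ga_{-x}(f(\xi)^*)\,\frac{d\xi\,dx}{(2\pi)^n}$. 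Taylor expansion of $x\mapsto \ga_{-x}(f(\xi)^*)$ at $x=0$, combined with the identity $\pl_x^\grg\bigr|_{x=0}\ga_{-x}(a) = i^{|\grg|}\gd^\grg a$ from \eqref{EqDeltaOnA} and integration by parts as above, collapses the series to \eqref{EqSymAdjoint} (the factors $i^{|\grg|}(-i)^{|\grg|} = 1$ cancelling).

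The main obstacle is the rigorous handling of the oscillatory integrals for symbols of positive order, where the integrands are only conditionally convergent. This is dealt with by the standard regularization: insert a cutoff $\chi(\ge y,\ge\xi)$ with $\chi \in \sS$, $\chi(0,0)=1$, so that all manipulations take place in $\sS(\R^{2n},\Ainf)$, then let $\ge\downarrow 0$. For the remainder bounds one must show that, after $N$ integrations by parts, the residual amplitude lies in $\operatorname{S}^{m+m'-N}$; this follows because the derivations $\gd^\grg$ preserve the symbol classes, the cocycle $(t,y)\mapsto By$ is polynomial, and the chain rule applied to $\ga_{-y}(g(t+By))$ produces only terms of the expected order. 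Stability of $\CSmA$ under these operations yields the classical version.
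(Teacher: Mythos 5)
Your proof is correct and follows essentially the same route as the paper: both reduce the product and the adjoint to the same double oscillatory integrals (via the plane-wave recovery formula \eqref{Eq1206045} for the product, and an integral representation of $\sigma(P_f^*)$ for the adjoint) and then apply the standard Taylor-expansion/integration-by-parts argument, your only deviations being that you expand the $\ga_{-y}\bl g(t+By)\br$ factor in $y$ rather than $f(t+\xi)$ in $\xi$ (equivalent by the symmetry of the stationary-phase expansion for a product amplitude) and that you obtain the adjoint's integral representation by formally pairing against $\gd_0\otimes b$ (which is not in $\SRnA$, so this step should be read as a heuristic or justified by approximation) instead of the paper's purely algebraic identity $P_f^*=M_{(f^\vee)^*}$ combined with the formula \eqref{EqTwistedConvolution} for $f^*$. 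Your explicit treatment of the regularization and of the remainder estimates is, if anything, more careful than the paper's.
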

\begin{remark} We compare these formulas with the corresponding
 formulas in the untwisted case, \cf\cite{Con:CAG}:

The formula for the symbol of the adjoint is unchanged.
For the product denote by $h_{\textup{untwisted}}$ the symbol of
the composition $P_f\circ P_g$ in the untwisted ($B=0$) calculus. Then
\begin{equation}\label{Eq1206047}  
  h_{\textup{untwisted}}(t)\sim  \sum_{\gamma} \frac{i^{-|\grg|}}{\gamma!}
      (\pl^\gamma f)(t) \pl_y^\gamma\big\vert_{y=0}
                   \ga_{-y}(g(t)).
\end{equation}
Expanding $\pl_y^\gamma\vert_{y=0}$ in \Eqref{EqPsiDOproduct} and
counting orders we see that up to terms of order $m+m'-3$ we have
\begin{equation}\label{EqPsiDOproductA}  
     h(t)=    h_{\textup{untwisted}}(t) + \frac 1i \sum_{j,k=1}^n (\pl_jf)(t)
     b_{kj} (\pl_k g)(t) \mod \SmAvar{m+m'-3}.
\end{equation}
\end{remark}

\begin{remark}\label{r.trace_psido}\MLrevision\mpar{Remark inserted}
The dual trace (Sec.~\ref{SSSDualTrace}) gives rise to
a \emph{natural}  trace on the algebra of pseudodifferential multipliers 
of order $<-n$. Namely, if $\psi$ is an $\ga$--invariant (finite) trace
 on $\sA$ then for $f\in \SmA$, $m<-n$, put (\cf \Eqref{EqPsiDO} --
\eqref{eq.rev2}) 
\begin{equation}\label{EqDualTrace_psido}  
 \Tr_\psi(P_f)= \psihat\bl f^\vee \br=\int_{\R^n} \psi\bl f(\xi) \br
 \dxi.
\end{equation}
Then $\Tr_\psi$ is a trace on $\cup_{m<-n} \pdo^m_\gs(\R^n,\Ainf)$.
It is important to understand that this trace in general differs from
the Hilbert space trace induced by the action on a Heisenberg module. This issue
is addressed in Sec.~\ref{SEffPsiDO}.
\end{remark}

\begin{proof} From \Eqref{Eq1206045} we infer
$\bigl(P_g e^{i\inn{t,\cdot}} 1_\sA\bigr)(x)= \ga_{-x}(g(t+Bx))e^{i\inn{t,x}}$.
Thus

\begin{equation}\label{Eq1206049}   
  \begin{split}
    h(t)&= \bigl(P_f P_ge^{i\inn{t,\cdot}} 1_\sA\bigr)(0)\\
      & = \bigl(\int_{\R^n} {f^\vee}(y) U_y \ga_{-\cdot}(g(t+
      B\cdot))e^{i\inn{t,\cdot}} dy \bigr)(0) \\
      &= \int_{\R^n}\int_{\R^n} e^{i\inn{y,\xi-t}} f(\xi) \ga_y(g(t-By))dy
      \,\dsl\xi\\
      &=\int_{\R^n}\int_{\R^n} e^{-i\inn{y,\xi}} f(\xi+t) \ga_{-y}(g(t+By))
      dy\,\dsl\xi.
  \end{split}
\end{equation}
The usual stationary phase argument implies that the symbol expansion is
obtained by Taylor expanding $f(t+\xi)\sim \sum_\gamma \frac{\pl^\gamma
f(t)}{\gamma!}\xi^\gamma$, replacing $\xi^\gamma e^{-i\inn{y,\xi}}$ 
by $i^{|\grg|} \pl_y^\grg e^{-i\inn{y,\xi}}$ and then integrating by parts.
Then by the Fourier inversion Theorem the summand of index $\gamma$
becomes
\pagebreak[2]
\begin{multline}\label{Eq12060410}  
 \frac{1}{\gamma!} (\pl^\gamma f)(t) 
 \int_{\R^n}\int_{\R^n} e^{-i\inn{y,\xi}}  
    i^{-|\grg|} \pl_y^\gamma\bigl(\ga_{-y}\bl g(t+By)\br\bigr) dy \,\dsl\xi\\
    = \frac{i^{-|\grg|}}{\gamma!} (\pl^\gamma f)(t)
  \pl_y^\gamma\big\vert_{y=0}\Bl \ga_{-y}\bl g(t)\br\Br,
\end{multline}
and the product formula is proved. 

For the adjoint note first that 
$P_f^*=M_{f^\vee}^*=M_{{f^\vee}^*}=P_{({f^\vee}^*)^\wedge}=:P_h$.
By \Eqref{EqTwistedConvolution} we find
\begin{equation}\label{eq:1209126}   
   {f^\vee}^*(x)=\ga_x\bigl({f^\vee}(-x)^*\bigr)=
       \int_{\R^n} e^{i \inn{x,\xi}} \ga_x\bigl(f(\xi)^*\bigr) \dxi.
\end{equation}
Thus 
\begin{equation}\label{eq:1209127}   
  \begin{split}
   h(t)&= \int_{\R^n}\int_{\R^n} e^{i\inn{x,\xi-t}} \ga_x\bigl(f(\xi)^*\bigr)
   \dxi \, dx\\
       &= \int_{\R^n}\int_{\R^n} e^{i\inn{x,\xi}} \ga_x\bigl(f(t+\xi)^*\bigr)
   \dxi \, dx.
  \end{split}
\end{equation}
As before we expand 
$f(t+\xi)\sim \sum_\gamma \frac{\pl^\gamma f(t)}{\gamma!}\xi^\gamma$, 
replace $\xi^\gamma e^{i\inn{x,\xi}}$ 
by $i^{-|\grg|}\pl_x^\gamma e^{i\inn{x,\xi}}$ and then integrate by parts.
The Fourier inversion Theorem then gives the result.
\end{proof}

\subsection{Differential multipliers} Among pseudodifferential multipliers
the class of differential multipliers is characterized by having symbols
which are polynomial in $\xi$. Thus
\begin{dfn}\label{DefDiffOp} 
$P_f\in\LmA$ is called a \emph{differential multiplier} of order $m$ if
$f\in \Ainf[\xi_1,\ldots,\xi_n]$ is a polynomial of degree at most
$m$, \ie
\begin{equation}\label{EqDiffOpSym}
  f(\xi)= \sum_{|\gamma|\le m} a_\gamma \xi^\gamma; \quad a_\gamma\in\Ainf.
\end{equation}
Here the sum runs over all multiindices $\gamma\in \Z_+^n$ with
$|\gamma|\le m$.  We denote the space of all differential multipliers by
$\Diff^m_\gs(\R^n,\Ainf)$.  Clearly, differential multipliers are
classical pseudodifferential multipliers.
\end{dfn}

Recall that in the ordinary pseudodifferential calculus the symbol of the basic
derivatives $i^{-|\gamma|} \pl_x^\gamma$ is given by $\xi^\gamma$.
Therefore, for a multiindex $\gamma$ we put
$\ud^\grg:=P_{\xi^\gamma}$.

\begin{prop}\label{PDFormula} For $u\in \SRnA$ we have
\[ 
\begin{split}
(\ud^\gamma u)(x)  & = (P_{\xi^\grg}u)(x)=
  i^{|\grg|} \pl_y^\grg\bigl|_{y=0} \uU_y u(x) \\
    & = i^{-|\grg|} \pl_y^\grg\bigl|_{y=0}\bl e(x,y) u(x+y) \br.
\end{split}
\]
\end{prop}
\begin{proof} We use the definition \Eqref{EqPsiDOb} and find
\begin{equation}\label{eq:1209124}   
  \begin{split}
      (\ud^\grg u)(x)
    & = \int_{\R^n} e^{i\inn{x,y}}  \bl \xi+Bx\br^\grg \hat u(\xi) \,
      \dsl \xi \\
      & = i^{-|\grg|} \pl_y^\grg\bigl|_{y=0} 
      \int_{\R^n} e^{i \inn{x+y,\xi+Bx}} \hat u(\xi)\, \dsl\xi\\
    & = i^{-|\grg|} \pl_y^\grg\bigl|_{y=0}\bl e(x,y) u(x+y)
      \br.\qedhere
\end{split}
\end{equation}
\end{proof}
\begin{remark}      
1. It is important to note that due to the twisting in
general $\ud^{\grg}\ud^{\grg'}\not= \ud^{\grg+\grg'}$, as can
be seen either directly  or by the just proved product formula.

2. As in the ordinary pseudodifferential calculus it is
in general not true that $P_f^*=P_{f^*}$. However,
$\sigma(P_f)^*=\sigma(P_f^*)\mod \SmAvar{m-1}.$

Furthermore, $\ud^\gamma$ is formally self-adjoint and thus for
any \emph{differential} multiplier we have indeed $P_f^*=P_{f^*}$.
\end{remark}

\subsubsection{Differential multipliers of order $1$ and $2$}
 \label{SSSDiffOp12}
Let $e_j, j=1,\ldots, n$ be the canonical basis vectors of $\R^n$.
We abbreviate $\ud_j:=\ud^{e_j}$. Then by
Prop. \ref{PDFormula}
\begin{equation}\label{eq:1209128}  
 \ud_j u(x)= i\ii\pl_{y_j}\bigl|_{y=0} e^{i\inn{Bx,y}} u(x+y)
           = \bigl(\frac 1i \pl_{x_j} + b_{jl} x_l\bigr) u(x),
\end{equation}
were summing over repeated indices is understood. Thus
\begin{align}\label{eq:1209129}  
 \ud_j \ud_k &= -\pl_{x_j}\pl_{x_k} - i b_{js} x_s \pl_{x_k} - i  b_{ks} x_s
  \pl_{x_j} - i b_{kj} + b_{js} b_{kr} x_s x_r,\\
\intertext{in particular}
[\ud_j,\ud_k] &= 2 i \, b_{jk}.\label{EqdbarComm}
\end{align}
The product formulas \Eqref{EqPsiDOproduct}, \eqref{EqPsiDOproductA} now give
\begin{equation}\label{EqSymProd}   
  \begin{split}
     \sigma\bigl(\ud_j \cdot \ud_k \bigr)  
       &= \sigma(\ud_j)\cdot \sigma
          (\ud_k)+ \frac 1i \sum_{r,s} \pl_{\xi_r}(\xi_j) \, b_{sr}
                \pl_{\xi_s}(\xi_k) \\
       & = \xi_j \cdot \xi_k + i b_{jk} = \sigma\bigl(\ud^{e_j+e_k}\bigr)+ i
       b_{jk}.
  \end{split}
\end{equation}
Thus we get the following explicit formula for $\ud^{e_j+e_k}$
\begin{equation}
 \ud^{e_j+e_k} = \ud_j\cdot \ud_k - i b_{jk},
\end{equation}
as well as again the commutator formula $ [\ud_j,\ud_k] = 2 i b_{jk}$.

\subsubsection{Differential multipliers in dimension $n=2$} 
\label{SSSDiffOp2}
We consider the case $n=2$ and the complex Wirtinger derivatives
$\pl$ and $\ovl{\pl}$ in this context. 
Fix $\tau\in\C$ with $\Im\tau>0$. Put
\begin{equation}\label{EqudbarA}  
    \udtau := \ud_1 + \taubar \ud_2, \quad \udtau^*=\ud_1 + \tau \ud_2.
\end{equation}
These are constant coefficient differential multipliers and therefore
with \Eqref{EqdbarComm}
\begin{equation}\label{EqudbarB}   
    [\udtau,\udtau^*]= - 4\cdot \Im\tau\cdot         b_{12}=:c_\tau.
\end{equation}
We define the Laplacian by
\begin{equation}\label{EqudbarC}  
     \uDelta_\tau:= \frac 12 (\udtau^*\udtau+\udtau\udtau^*)
        = \ud_1^2+ |\tau|^2 \ud_2^2 + \Re\tau ( \ud_1 \ud_2+\ud_2\ud_1).
\end{equation}
Note
\begin{align}
  k^2 \udtau\udtau^* &=k^2  \uDelta_\tau + \frac 12 k^2 c_\tau, \\
  \udtau k^2\udtau^* &= k^2 \uDelta_\tau +(\udtau k^2) \udtau^* + \frac 12 k^2
  c_\tau,\\
  \udtau^* k^2 \udtau &=k^2  \uDelta_\tau +(\udtau^* k^2 )\udtau - \frac 12 k^2 c_\tau.
\end{align}
This symmetric definition of $\uDelta_\tau$ has the advantage that its
symbol equals $|\eta|^2:=|\xi_1+\tau \xi_2|^2$.

%

\section{The resolvent expansion for elliptic differential multipliers}
\label{SResExp}

\subsection{Set-up and formulation of the result}
\label{SSSetUp}

We continue to work in the framework of Section \plref{SPsiDOMul}. 
The algebra $\sA$ stands for, \eg either $\Atp, \At$, or $\Atp\ot \At$.
$\psi$ stands for the corresponding normalized trace
($\varphi_0$ resp. $\varphi_0\otimes\varphi_0$).\MLrevision\mpar{sentence
inserted} 
We consider the differential  multiplier
\begin{equation}\label{EqDiffMulP1}
  P= P_{\eps_1 , \eps_2} := k^2 \uDelta_\tau + \eps_1 (\dtau k^2) \udtau^* + \eps_2
  (\dtau^*k^2) \udtau + a_0,
\end{equation}
\MLrevision
\mpar{second partial was not bold face}
where $a_0\in\sA$ and $\eps_1, \eps_2$ are real parameters. 

We want to compute the first three terms in the expansion of the
resolvent $(P-\gl)\ii$ in the parameter dependent pseudodifferential
calculus. For simplicity we restrict ourselves here to the case $n=2$
and the special $P$ as above. We emphasize, however, that to a large
extent the computation could be done in greater generality; we intend
to come back to this in a future publication.

Recall from Subsection \ref{SSSDiffOp12} that the symbol of $P$ takes the form
\begin{equation}\label{EqResExpSymbol}  
    \sigma_P(\xi):= a_2(\xi) + a_1(\xi) +a_0,
\end{equation}
where $a_0\in\Ainf$ is the same as above and 
\begin{align*}
    a_2(\xi) &= k^2 |\xi_1+\taubar \xi_2|^2 =: k^2 |\eta|^2, \\
    a_1(\xi) &= \eps_1 (\dtau k^2) \etabar + \eps_2 (\dtau^* k^2) \eta,
             &\eta &:= \xi_1+\taubar \xi_2, \\
             &=: \varrho_1 \etabar + \varrho_2 \eta, &\vrho_1 &:= \eps_1
  \dtau k^2, \varrho_2 := \eps_2 \dtau^* k^2.
\end{align*}

Below we calculate the resolvent expansion in terms of
functions of $\xi$. The resolvent trace density, at least with
respect to the dual trace \plref{SSSDualTrace}, is obtained by
integration over $\R^2$ with respect to $\xi$. Let $f(\eta_1,\eta_2)\in \CSmA$
be a symbol of order $<-2$ which is given as a function of
$\eta=\xi_1+\taubar \xi_2=:\eta_1+i\eta_2$. Changing variables we have
\begin{equation}\label{EqResExpIntegral1}
  \int_{\R^2} f(\eta_1,\eta_2) \dxi 
     = \frac{1}{(2\pi)^2 |\Im \tau|} \int_{\R^2} f(\eta_1,\eta_2)
     d\eta,\quad d\eta:=d\eta_1 d\eta_2.
\end{equation}
Furthermore, we note that if $f(\eta_1,\eta_2)=g(|\eta|^2)$ 
depends only on $|\eta|^2$ then 
\begin{equation}\label{EqResExpIntegral2}  
  \int_{\R^2} \eta_1^{\ga_1} \eta_2^{\ga_2} \, g(|\eta|^2) d\eta = 0, 
\end{equation}
whenever $\ga_1$ or $\ga_2$ is odd. Furthermore, regardless of the
parity of $\ga_j$ the integral on the left of \Eqref{EqResExpIntegral2}
is invariant under permutations of $(\ga_1,\ga_2)$. In particular
\MLrevision
\mpar{Referee item 6}
\begin{equation}\label{EqResExpIntegral3}  
  \begin{split}
    \int_{\R^2} \eta_k^2 g(|\eta|^2) d\eta &= \frac 12 \int_{\R^2} |\eta|^2
 g(|\eta|^2)  d\eta,\quad k=1,2 \, ;\\
    \int_{\R^2} \eta^2 f(|\eta|) d\eta &=  \int_{\R^2} \bl \eta_1^2 -
\eta_2^2 + 2 i \eta_1\eta_2 \br f(|\eta|) d\eta = 0.
 \end{split}
\end{equation}

For functions $f,g$ of $\xi$ we therefore introduce
the notation $f\dot = g$ if $f$ and $g$ have the same
$\xi$--integral, and hence the same $\eta$--integral.

\pagebreak[3]
Now we are able to state the main result about the resolvent of the
multiplier $P$.
\pagebreak[2]

\begin{theorem}\label{TResExp} Let $P$ be the differential multiplier
  \Eqref{EqDiffMulP1} in the pseudodifferential multiplier calculus
  and denote by $\eta:=\xi_1+\taubar \xi_2$ the symbol of $\udtau$
  and let $b(\xi):= (k^2 |\eta|^2-\gl)\ii$. 

Then the resolvent $(P-\gl)\ii$ of $P$ is a parameter dependent 
pseudodifferential multiplier with polyhomogeneous symbol
$b_{-2}+b_{-3}+b_{-4}+\ldots$. Up to a function of total
$\xi$--integral $0$ we have the following closed formulas for
the first three terms in the symbol expansion of $(P-\gl)\ii$:
\begin{align}
b_{-2}  = b & = (k^2 |\eta|^2-\gl)\ii,\label{EqResExp1}\\
  b_{-3}    & = - b k^2 \bl \eta \dtau^*+ \etabar \dtau\br b - b a_1 b,
                 \label{EqResExp2} \\
  b_{-4}    & = \bl 2b k^2 |\eta|^2 - 1 - \eps_1-\eps_2\br b k^2
                    \Lapl_\tau b 
                    \label{EqResExp3} \\
    &\quad + \gl b k^2 \bl (\dtau^* b) ( \dtau b) 
                    + (\dtau b)(\dtau^* b)\br 
                    \label{EqResExp4} \\
    &\quad + \eps_1\cdot \gl b (\dtau k^2) b \dtau^* b         
              + \eps_2 \cdot \gl b (\dtau^* k^2) b \dtau b   
                    \label{EqResExp5} \\
           &\quad + \eps_1\eps_2\cdot 
               |\eta|^2 b\cdot \bl (\dtau k^2) b (\dtau^* k^2)
               +(\dtau^* k^2) b \dtau k^2 \br \cdot b - b a_0 b.
                    \label{EqResExp6} 
\end{align}
\end{theorem}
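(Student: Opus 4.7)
\medskip
\noindent\textbf{Proof plan for Theorem \ref{TResExp}.}
The strategy is the standard parameter-dependent symbol recursion: we look for a classical parameter-dependent symbol $b \sim b_{-2} + b_{-3} + b_{-4} + \cdots$ such that the composition $(P-\lambda)\circ P_b$ has symbol $1$, and then read off $b_{-2}, b_{-3}, b_{-4}$ from the product formula \eqref{EqPsiDOproduct}. Because $\sigma_{P-\lambda} = a_2 - \lambda + a_1 + a_0$ is a polynomial in $\xi$ of degree $\leq 2$, the product expansion of $\sigma((P-\lambda)\circ P_b)$ in the twisted calculus contains only finitely many nonzero derivatives in $\xi$, namely $\partial^\gamma$ for $|\gamma|\le 2$. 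Collecting contributions of the same homogeneity in $(\xi,\lambda)$ (where $\lambda$ counts as degree $2$) yields a triangular system; at each stage one solves for $b_{-2-j}$ by left-multiplication with $b = (k^2|\eta|^2-\lambda)^{-1}$.

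Concretely, the leading equation $(a_2-\lambda)\,b_{-2} = 1$ gives at once $b_{-2} = b$. At the next order I would collect all terms contributing to homogeneity $-1$: these are $a_1 b_{-2}$, together with the cross-term coming from the first-order $\partial_\xi$-derivative of $a_2-\lambda$ acting on $b_{-2}$, corrected by the twist via $\partial_y|_{y=0}\alpha_{-y}(b(\xi+By))$. Computing $\partial_\xi a_2 = k^2(2\xi_1+2\Re\tau\,\xi_2, \, 2\Re\tau\,\xi_1 + 2|\tau|^2\xi_2)$ and recognizing the combination as $k^2(\eta\,\partial_{\bar\tau} + \bar\eta\,\partial_\tau)$ applied to $b$ produces \eqref{EqResExp2}. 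The $b_{-4}$-term involves both the second-order derivatives of $a_2$ and the first-order derivatives of $a_1$; since $a_1$ is homogeneous of degree one and linear in $\eta, \bar\eta$, the $\partial_\xi a_1$ pieces give precisely the $\varepsilon_1 \varepsilon_2$ cross term and the two $\varepsilon_j \lambda$ terms, while the second-order derivative of $a_2$, together with the twist correction \eqref{EqPsiDOproductA}, yields the $(\dtau^* b)(\dtau b) + (\dtau b)(\dtau^* b)$ and $\Lapl_\tau b$ pieces.

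A decisive simplification is that the theorem only asserts the formulas modulo symbols whose total $\xi$-integral vanishes, that is modulo the equivalence $\dot=$ introduced just before the statement. This lets me discard divergence-type terms $\partial_{\xi_j}(\cdots)$ freely, absorb contributions of the form $\eta^2 f(|\eta|^2)$ by \eqref{EqResExpIntegral3}, and symmetrize squared single components $\eta_k^2 f(|\eta|^2)$ to $\tfrac12|\eta|^2 f(|\eta|^2)$. Combined with the identity $k^2|\eta|^2 \cdot b = 1 + \lambda b$, which permits converting powers of $|\eta|^2$ into constants modulo $\lambda b$, these reductions are what collapse the raw recursive output into the compact shape of lines \eqref{EqResExp3}--\eqref{EqResExp6}.

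The main obstacle I anticipate is purely bookkeeping: unlike the commutative case, $k^2$ does not commute with $\dtau k^2$, $\dtau^* k^2$, or with $b$ itself, so the order of the factors in every term must be tracked. In particular, the rewriting of $\partial_\xi b$ in terms of $b(\dtau k^2)b$ and $b(\dtau^* k^2)b$ (and of $\partial_\xi^2 b$ in terms of double products of these with $b$) has to be done without ever re-ordering factors across $b$, and the $\dot=$-simplifications must be applied only when a genuine $\xi$-integral can be extracted from the whole expression. Once the noncommutative Leibniz rule is applied carefully and the twist correction \eqref{EqPsiDOproductA} $\tfrac{1}{i}\sum_{j,k}(\partial_j f)b_{kj}(\partial_k g)$ is incorporated in the second-order step, the formulas \eqref{EqResExp1}--\eqref{EqResExp6} drop out directly.
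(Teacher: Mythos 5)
Your plan is essentially the paper's proof: the same parametrix recursion $1=\sigma_\gl * \sigma_B$ with $\gl$ treated as a variable of degree $2$, the same triangular solve by left multiplication with $b$, and the same final reductions via $k^2|\eta|^2\, b = 1+\gl b$ and the equivalence $\dot=$ modulo symbols of vanishing total $\xi$--integral. One correction to your description of the second-order step: the twist defect $\tfrac 1i (\pl_{\xi_j} a_2)\, b_{kj}\, (\pl_{\xi_k} b_{-2})$ — the only defect term of order $\ge -2$ — actually vanishes, because $B$ is skew-symmetric while $a_2$ and $b$ are functions of $|\eta|^2$; so it contributes nothing to \eqref{EqResExp3}--\eqref{EqResExp4}, and up to the order relevant for $b_{-4}$ one may simply use the untwisted product formula, the only residue of the twisting being that the derivations $\dtau,\dtau^*$ acting on symbols commute, unlike the multipliers $\udtau,\udtau^*$.
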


\subsubsection{The second heat coefficient in terms of $k^2$} 
To formulate the main result about the second heat coefficient
we introduce the following abbreviations: 
\begin{equation}\begin{split}
  \tsRk 
       & = \frac 12  (k^{-2} \dtau k^2 \cdot k^{-2}\dtau^* k^2
         + k^{-2} \dtau^* k^2 \cdot k^{-2} \dtau k^2)\\
  \tsIk 
       & =  \frac 12 (k^{-2} \dtau k^2 \cdot k^{-2} \dtau^* k^2 
                -  k^{-2} \dtau^* k^2 \cdot k^{-2} \dtau k^2).
\end{split}
\end{equation}
Furthermore, denote by $\modu = k^{-2}\cdot k^2 = e^{\nabla_h}$ the
modular operator.

\MLrevision\mpar{statement of Thm expanded}
\begin{theorem}\label{TbInt} Let $P$ 
  be the differential multiplier \Eqref{EqDiffMulP1}. Then there is
an asymptotic expansion
\[
  \Tr_\psi \bl a e^{-t P} \br \sim_{t \searrow 0} 
       \sum_{j=0}^\infty a_{2j}(a,P)\, t^{j-1}.
\]
Here, $\Tr_\psi$ is the natural trace on the algebra
of pseudodifferential multipliers (\cf Remark \ref{r.trace_psido})
induced by the (unique) normalized trace $\psi$ on $\sA$,
which is invariant under the natural $\R^2$--action.

We have 
\begin{equation}\label{eq.rev4}
     a_0(a,P) = \frac{1}{4\pi |\Im\tau| }\psi ( a k^{-2} ).
\end{equation}

Furthermore, there exist functions
  $F(u), G^\Re(u,v),
  G^\Im(u,v)$ such that the second heat coefficient
  of $P$ 
is given by
\begin{multline}\label{EqbInt}
  a_2(a,P) = \frac{1}{4\pi |\Im\tau|} \psi\Biggl[ a 
         \Bl F(\modu)(k^{-2} \Lapl_\tau k^2)  - \cL_0(\modu)(k^{-2} a_0) \\
   + G^\Re(\modua,\modub)\bl \tsRk \br
   + G^\Im(\modua,\modub)\bl \tsIk \br \Br
            \Biggr],
\end{multline}
The functions $F, G^\Re, G^\Im$ depend only on $P$ but not on
$\tau$. They are linear combinations of simple \footnote{That means at most
the third divided difference occurs. Recall that the divided differences
of a smooth function $f$ are recursively defined by  
\[
     [x_0] f := f(x_0),\quad
     [ x_0,\ldots,x_n] f := \frac {1}{x_0-x_n}\bl [x_0,\ldots, x_{n-1}] f 
         - [x_1,\ldots,x_n] f\br.
\]
For a review and further references see \cite[Appendix A]{Les2014}.
} divided differences of $\log$.
In particular they are analytic on the Riemann surface of $\log$.
$\cL_0(u)=\frac{\log u}{u-1}$ is the generating function of the 
Bernoulli numbers.
\end{theorem}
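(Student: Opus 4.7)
My starting point is the Cauchy integral formula
\[
e^{-tP}=\frac{1}{2\pi i}\int_\Gamma e^{-t\lambda}(P-\lambda)^{-1}d\lambda,
\]
valid for a contour $\Gamma$ encircling the spectrum of the positive elliptic operator $P$. The existence of the heat expansion itself follows, as in the parameter-dependent pseudodifferential calculus \`a la Shubin and references in Section \ref{SPsiDOMul}, from Theorem \ref{TResExp}: applying the natural trace $\Tr_\psi$ (Remark \ref{r.trace_psido}) and the symbol expansion term by term yields
\[
a_{2j}(a,P)=\frac{1}{2\pi i}\oint_{\Gamma_0} e^{-\lambda}\,\psi\!\left(a\int_{\R^2} b_{-2-j}(\xi,\lambda)\,\dsl\xi\right)d\lambda,
\]
after the standard rescaling $\lambda\mapsto t\lambda$ that converts $t^{j-1}$ into the homogeneity weight of $b_{-2-j}$ in the parameter-dependent calculus. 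This reduces the theorem to evaluating the $\lambda$- and $\xi$-integrals of the explicit symbols from Theorem \ref{TResExp}.

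The leading term is immediate: integrating $b_{-2}=(k^2|\eta|^2-\lambda)^{-1}$ against $e^{-t\lambda}$ in $\lambda$ yields $e^{-tk^2|\eta|^2}$; passing to the variable $\eta=\xi_1+\taubar\xi_2$ costs the Jacobian $|\Im\tau|^{-1}$, and the Gaussian integral $\int_{\R^2}e^{-tk^2|\eta|^2}d\eta=\pi t^{-1}k^{-2}$ produces exactly the prefactor $\frac{1}{4\pi|\Im\tau|}$ and the conclusion \eqref{eq.rev4}. For $a_2$ I substitute the six lines \eqref{EqResExp3}--\eqref{EqResExp6} of $b_{-4}$ and treat them in turn. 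The non-commutativity of $k^2$ with its derivatives is resolved by the rearrangement identity $Xk^2=k^2\modu(X)$ applied inside the resolvents, which converts every product $(k^2|\eta|^2-\lambda)^{-1}X(k^2|\eta|^2-\lambda)^{-1}\cdots$ into
\[
\prod_{s=0}^{m}\bl k^2\modu^{\,s_j}|\eta|^2-\lambda\br^{-1}\cdot(\text{nested commutants of }X,Y),
\]
after which the partial-fraction identity $\prod_{j=0}^m(x_j-\lambda)^{-1}=\sum_j(x_j-\lambda)^{-1}\prod_{i\ne j}(x_j-x_i)^{-1}$ and the contour integral $\frac{1}{2\pi i}\oint_\Gamma e^{-t\lambda}(x-\lambda)^{-1}d\lambda=e^{-tx}$ collapse the $\lambda$-integration into a divided difference of $x\mapsto e^{-tx}$, which upon integrating against $t^{-1}dt$ (from the reconstruction of the $t^0$ coefficient) becomes the corresponding divided difference of $\log$.

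The remaining $\eta$-integration involves only polynomials in $\eta_1,\eta_2$ against functions of $|\eta|^2$: by the parity relations \eqref{EqResExpIntegral2}--\eqref{EqResExpIntegral3} the cross terms with $\eta^2$ or $\bar\eta^2$ vanish and $\eta_k^2$ is replaced by $|\eta|^2/2$. After these reductions the Laplacian-of-$k^2$ piece of $b_{-4}$ contributes the $F(\modu)(k^{-2}\Lapl_\tau k^2)$ summand, the three-factor pieces in lines \eqref{EqResExp4}--\eqref{EqResExp6} produce the two-variable functions $G^\Re(\modua,\modub)$ and $G^\Im(\modua,\modub)$ applied respectively to $\tsRk$ and $\tsIk$, with the split into the symmetric and antisymmetric parts enforced by the $\eta\leftrightarrow\bar\eta$ symmetry of the $\eta$-integrals. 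The $ba_0b$ term produces the Bernoulli generating function piece $\cL_0(\modu)(k^{-2}a_0)$ via the standard representation $\log u=\int_0^\infty\bl(1+s)^{-1}-(u+s)^{-1}\br ds$. The $\tau$-independence of $F,G^\Re,G^\Im$ is built in: all the $\tau$-dependence is absorbed either into the definition of $\eta$ (hence into $\dtau,\dtau^*$, which are packaged in the arguments $\Lapl_\tau k^2$, $\tsRk$, $\tsIk$) or into the Jacobian $|\Im\tau|^{-1}$ sitting in front of \eqref{EqbInt}; the residual $\eta$-integrals are isotropic.

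\textbf{Main obstacle.} The genuine difficulty is the bookkeeping in the three-resolvent terms \eqref{EqResExp5}--\eqref{EqResExp6}: each of them generates, after rearrangement, expressions with two independent modular arguments that must be reorganised so that what remains acts on the \emph{symmetric} combination $\tsRk$ or on the \emph{antisymmetric} combination $\tsIk$, rather than on each of $(\dtau k^2)(\dtau^* k^2)$ and $(\dtau^* k^2)(\dtau k^2)$ separately. This split is not automatic term by term; it emerges only after combining contributions coming from different lines of $b_{-4}$, and verifying it cleanly without computer algebra is precisely the ``systematic use of modular identities'' advertised in the introduction. Once the split is established the identification of $F,G^\Re,G^\Im$ as linear combinations of simple divided differences of $\log$ is a formal consequence of the partial-fraction machinery above.
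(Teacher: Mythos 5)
Your outline follows the \emph{direct} route: substitute all six lines of $b_{-4}$, rearrange, and evaluate each resulting $\gl$- and $\xi$-integral. That route is viable in principle (it is what \cite{ConTre2011,ConMos2011,FarKha2013} did, with computer assistance), and your mechanisms are the right ones — the contour integral, the rearrangement identity $Xk^2=k^2\modu(X)$, the parity reductions \eqref{EqResExpIntegral2}--\eqref{EqResExpIntegral3}, and the fact that the surviving $x$-integrals of products of $(1+xu_j)^{-1}$ are divided differences of $\log$. Your computation of $a_0$ and of the $-ba_0b$ contribution is correct, and the ``existence'' part of the statement (that $a_2$ has the form \eqref{EqbInt} for \emph{some} functions $F,G^\Re,G^\Im$) does follow from your plan together with the Rearrangement Lemma, exactly as the paper concedes at the start of its reduction argument.

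The gap is that you stop at the point where the work actually begins. You name the ``main obstacle'' — reorganising the three-resolvent terms \eqref{EqResExp4}--\eqref{EqResExp6} so that the result acts on $\tsRk$ and $\tsIk$, and identifying the resulting two-variable functions — and then assert that resolving it is ``precisely the systematic use of modular identities advertised in the introduction,'' without supplying those identities. That is the content of the theorem's final assertions (that $F,G^\Re,G^\Im$ are linear combinations of \emph{simple} divided differences of $\log$, hence analytic on the Riemann surface of $\log$), and it is exactly what the paper proves differently. The paper first treats only the graded operator $P_\gamma=k^2\Lapl_\tau\oplus\bl k^2\Lapl_\tau+(\dtau k^2)\dtau^*\br$ with the graded trace, where all but two summands of $b_{-4}$ cancel and the remaining integrals are the elementary $H^{(2)}_{(\ga_0,\ga_1,0)}$, i.e.\ explicit divided differences. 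It then uses the conjugation identity $\modu(P_{0,0})=P_{1,1}+\Lapl_\tau(k^2)$, which via $\Tr_\psi\bl ae^{-tP_{1,1}}\br=\Tr_\psi\bl(k^2ak^{-2})e^{-t(P_{0,0}-k^2(\Lapl_\tau k^2)k^{-2})}\br$ yields the a priori relations \eqref{EqConjA}--\eqref{EqConjB}; combined with the observed linearity of the coefficient functions in $(\eps_1,\eps_2)$ this determines $F_{\eps_1,\eps_2}$, $G^\Re_{\eps_1,\eps_2}$, $G^\Im_{\eps_1,\eps_2}$ for all parameter values from the graded case alone, bypassing the tedious integrals entirely. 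Also, a small correction to your stated obstacle: the symmetric/antisymmetric split is essentially term-by-term, since \eqref{EqResExp4} and \eqref{EqResExp6} are manifestly symmetric in $\dtau k^2\leftrightarrow\dtau^*k^2$ and only \eqref{EqResExp5} mixes the two; the real difficulty is not the split but the explicit evaluation, which your proposal defers rather than performs.
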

The functions will be given explicitly in \Eqref{EqFgamma},
\eqref{EqGRgamma}, \eqref{EqGIgamma}, \eqref{EqF}, \eqref{EqGR},
and \eqref{EqGI} below.

\subsection{Second heat coefficient in terms of $\log k^2$}
To express the second heat coeffient in terms of $\log k^2$ we 
write for $h\in\sA^\infty$
\begin{equation} \begin{split}
  \dtau h\cdot \dtau^* h   &=:  \square^\Re(h) + \square^\Im(h) \\
  \dtau^* h\cdot \dtau h  &=:  \square^\Re(h) - \square^\Im(h).
\end{split}
\end{equation}
Clearly,
\begin{equation}\begin{split}
  \square^\Re(h) & = \frac 12  (\dtau h \cdot \dtau^* h
                              + \dtau^* h \cdot \dtau h)\\
  \square^\Im(h) & =  \frac 12 (\dtau h \cdot \dtau^* h
                              - \dtau^* h \cdot \dtau  h).
\end{split}
\end{equation}

\begin{cor}\label{CA2} There exist entire functions $K(s), H^\Re(s,t),
  H^\Im(s,t)$, such that with $h:=\log k^2$ 
  the second heat coefficient of $P$ takes the form
\begin{equation}\label{EqbIntCor}
\begin{split}  
  a_2(a,P) = \frac{1}{4\pi |\Im\tau|} \psi\Biggl[ &a 
         \Bl K(\nabla)(\Lapl_\tau h)  - k^{-2} a_0 \\
  &+ H^\Re(\nabla^{(1)},\nabla^{(2)})\bl \square^\Re(h) \br
   + H^\Im(\nabla^{(1)},\nabla^{(2)})\bl \square^\Im(h) \br \Br
            \Biggr],
\end{split}
\end{equation}
\end{cor}  
The functions will be given explicitly in the proof in
\Eqref{EqFK}, \eqref{EqGHR}, \eqref{EqGHI}.

In the sequel we will always use the following synonyms for
variables: $s=\log(u), t=\log(v)$.
\begin{proof} Recall from \cite[Lemma 5.1]{FarKha2013}, see also 
\cite[Proof of Lemma 3.3]{ConTre2011}, \cite[Sec.~6.1]{ConMos2011},
resp. in the notation of divided differences \cite[Ex.~3.13]{Les2014}
\pagebreak[3]
\begin{align}  
  k^{-2}\ud_\tau^{(*)} k^2  & = \frac{e^\nabla-1}{\nabla}
  \ud_\tau^{(*)} h \\
  k^{-2} \uDelta_\tau k^2   & = \frac{e^\nabla-1}{\nabla} \uDelta_\tau h
  \nonumber\\
  &\quad + 2 [0,\nabla^{(1)},\nabla^{(1)}+\nabla^{(2)}]\exp (\square^\Re
  (h)),
\end{align}
where 
\begin{equation}
\begin{split}
    [0,s,s+t]\exp &= \frac{(e^{s+t}-1)s - (e^s-1)(s+t)}{st(s+t)} \\
              & = \frac{(uv-1)\lgu - (u-1)\lguv}{\lgu \lgv \lguv}.
\end{split}
\end{equation}
I.~e.,~$[\cdot,\cdot,\cdot]\exp$ is the second divided difference of the
exponential function.

Inserting these formulas into \Eqref{EqbInt} immediately yields the claim.
Moreover, we get the following explicit formulas for the functions
$K,H^\Re, H^\Im$ in terms of $F, G^\Re, G^\Im$:
\begin{align}
  K (s)        & = F(u) \frac{u-1}{\lgu} = F(e^s) \frac{e^s-1}{s},\label{EqFK} \\
  H^\Re(s,t)   & = 
     2 F(uv) \frac{(uv-1)\lgu - (u-1)\lguv}{\lgu \lgv \lguv}, \nonumber \\
&\quad + G^\Re(u,v) \frac{u-1}{\lgu} \frac{v-1}{\lgv} \nonumber\\
   & = 2 F(e^{s+t}) \frac{(e^{s+t}-1) s - (e^s-1)(s+t)}{s t (s+t)}, \nonumber \\
&\quad + G^\Re(e^s,e^t) \frac{e^s-1}{s} \frac{e^t-1}{t} \label{EqGHR}\\
    H^\Im (s,t) & = G^\Im(u,v)  \frac{u-1}{\lgu} \frac{v-1}{\lgv}
     \nonumber\\
    & = G^\Im(e^s,e^t)  \frac{e^s-1}{s} \frac{e^t-1}{t}.
          \label{EqGHI}
\end{align}
Recall, that $s=\lgu, t=\lgv$.
\end{proof}

\subsection{Proof of Theorem \ref{TResExp}}\label{SSResRec} 
\newcommand{\plxi}[1]{\pl_{\xi_{#1}}}
Let $\gl$ be the resolvent parameter and consider the parameter
dependent symbol 
$\sigma_\gl(\xi):=\sigma_P(\xi)-\gl=:a_2'+a_1+a_0; a_2':=a_2-\gl$, 
which is elliptic in the parameter dependent calculus. 
To find the resolvent parametrix $B(\gl)$ we need to solve
\begin{equation}\label{EqResExpSymEqn}  
          1 = \sigma_\gl * \sigma_B  
\end{equation}
with respect to the product \Eqref{EqPsiDOproduct}; here we have to
treat $\gl$ as a variable of order $2$.

In the following, unless otherwise said, we understand summation
convention over repeated indices running from $1$ to $2$.
The product formula \Eqref{EqPsiDOproduct} shows that up to terms
of order $\le -1$ we have $\sigma_\gl * \sigma_B = a_2' \cdot b_{-2}$.
This already shows $b:=b_{-2}=(a_2')\ii$ \Eqref{EqResExp1}. 
Furthermore, note that up to terms of order $\le -3$ the defect
between the product formula \Eqref{EqPsiDOproduct} and the untwisted
product formula (\cf\Eqref{EqPsiDOproductA}) is given by 
\[
  \frac 1i \bl \plxi j a_2 \br  b_{jk} \bl \plxi k b \br,
\]  
and this vanishes since $b_{jk}$ is skew and $a_2$ and $b$ are
functions of $|\eta|^2$. Thus up to terms of order $\le -3$ we may
employ the usual untwisted product formula and hence we find
up to a term of order $\le -3$       
\begin{equation}\label{eq:1209174}  
   \sigma_\gl * \sigma_B =  \sigma_\gl \cdot \sigma_B + \pl_{\xi_j}
   (a_2+a_1)\cdot \gd_j \sigma_B 
   + \frac 12 (\pl_{\xi_k}\pl_{\xi_l} a_2) \gd_k \gd_l \sigma_B.
\end{equation}

We rewrite the individual summands on the right as follows:
\begin{align}
   \pl_{\xi_j} a_2 \gd_j  
          & = k^2 \gd_{\xi_j} |\eta|^2 \gd_j =
              k^2 \bl \etabar \dtau + \eta\dtau^* \br, \\
     \pl_{\xi_j} \bl a_1(\xi)\br  \gd_j 
          &= \plxi j \bl \vrho \etabar +\varrho_2 \eta\br \gd_j = \vrho
     \dtau^* +\varrho_2 \dtau,\\
\frac 12 (\pl_{\xi_k}\pl_{\xi_l} a_2) \gd_k \gd_l 
          & = 
       \frac{k^2}{2} \gd_{\xi_k} \gd_k \bl \etabar\dtau +\eta \dtau^* \br 
       = \frac{k^2}{2} \bl \dtau^*\dtau + \dtau\dtau^*\br 
       = k^2 \Lapl_\tau.
\end{align}       

Here, the operators $\dtau:=\frac 1i \delta_\tau= \frac 1i (\delta_1+\taubar\delta_2),
\dtau^*:=-\frac 1i \delta_\tau^*= \frac 1i (\delta_1+\tau\delta_2),
\Lapl_\tau:=\dtau^*\dtau=\dtau\dtau^*$ are the counterparts to the multipliers
\Eqref{EqudbarA} acting on $\sA$. As opposed to the multipliers where
the structure constant \Eqref{EqudbarB} may be nonzero the operators $\dtau$
and $\dtau^*$ commute.

Now we expand $\sigma_B\sim b_{-2}+ b_{-3}+b_{-4}+\ldots$ into 
homogeneous terms of order $-2, -3, -4,\ldots$. Ordering terms in
\Eqref{eq:1209174}  by homogeneity and writing $b$ for $b_{-2}$ we find
\begin{align}\label{EqResExpRec}   
  1\sim \sigma_\gl * \sigma_B   
      & =  a_2'\cdot  b \quad (\text{order } 0)\nonumber\\
      & \quad + a_2'\cdot b_{-3} + a_1 \cdot b + 
             k^2 \bl \etabar \dtau +\eta\dtau^*) b \quad(\text{order } -1)\\
      & \quad + a_2' \cdot b_{-4} + k^2 (\etabar \dtau+\eta\dtau^*) b_{-3} 
              +   k^2 \Lapl_\tau b \quad (\text{order } -2) \nonumber\\ 
      & \quad   + a_1 \cdot b_{-3} + (\varrho_1
               \dtau^*+\varrho_2 \dtau) b + a_0 \cdot b, \nonumber
\end{align}
and hence 
\begin{align}  
 b_{-2}     & = (k^2 |\eta|^2-\gl)\ii,\label{Eqb2}\\
   b_{-3}   & = - b k^2 (\etabar \dtau+\eta\dtau^*) b - b \cdot a_1 \cdot  b, \label{Eqb3}\\
   b_{-4}   & = -b k^2 (\etabar \dtau+\eta\dtau^*) b_{-3} - b\cdot a_1 \cdot b_{-3}  \label{Eqb4}\\
                &\quad  - b k^2 \Lapl_\tau b - b \cdot(\varrho_1
  \dtau^*+\varrho_2 \dtau) b
                    - b\cdot a_0 \cdot b.\nonumber
\end{align}
We rewrite the individual summands of $b_{-4}$ modulo functions of vanishing
total integral (\cf~the discussion before Theorem \ref{TResExp}). 
We make frequent use of the formulas
\begin{equation}
  k^2 |\eta|^2  = a_2'+\gl = b\ii +\gl, \quad b k^2 |\eta|^2= 1+ \gl \cdot b,
                                         \label{EqResExpAbbrev3}
\end{equation}

\subsubsection{First summand involving $b_{-3}$} We replace $b_{-3}$ by the 
rhs of \Eqref{Eqb3} and find for each term

\begin{equation}
  \begin{split}
    b k^2 &(\etabar\dtau+\eta \dtau^*) \bl b k^2
      (\etabar\dtau+\eta\dtau^*)b\br\\
      & \dot= b k^2 \dtau\bl b k^2 |\eta|^2 \dtau^* b\br 
           + b k^2 \dtau^*\bl b k^2 |\eta|^2 \dtau b\br\\
      & = b k^2 \dtau\bl (1+\gl\cdot b) \dtau^* b\br 
            + b k^2 \dtau^*\bl (1+\gl\cdot b) \dtau b\br\\
      & = 2 b k^2 |\eta|^2  b k^2 \Lapl_\tau b 
            + \gl b k^2 \bl (\dtau^* b)\dtau b + (\dtau b)\dtau^* b \br,
\end{split}
\end{equation}
resp.
\begin{equation}
  \begin{split}
  - b k^2 \bl \etabar \dtau +\eta \dtau^*\br \bl 
    &- b(\vrho \etabar + \varrho_2 \eta) b\br \\
    & \dot= k^2 |\eta|^2 b \bl \dtau^* (b \vrho b) + \dtau (b \varrho_2 b) \br \\
    & = b k^2 \bl \dtau^*\bl\eps_1 b |\eta|^2 (\dtau k^2) b\br \br  
       +b k^2 \bl \dtau  \bl\eps_2 b |\eta|^2 (\dtau^* k^2) b\br \br \\
    & = - (\eps_1+\eps_2) b k^2 \Lapl_\tau b.
     \end{split}     
\end{equation}

\subsubsection{Second summand 
  $- b a_1 b_{-3} = -b (\vrho \etabar+ \varrho_2\eta) b_{-3}$
involving $b_{-3}$} Similarly,
\begin{equation}
  \begin{split}
    - b (\varrho_1 \etabar + \varrho_2\eta) &\bl -b k^2
    (\etabar\dtau+\eta\dtau^*) b \br \\
    & \dot= b \varrho_1 |\eta|^2 b k^2 \dtau^* b
            + b \varrho_2 |\eta|^2 b k^2 \dtau b \\
     &= b\vrho (1+\gl\cdot b) \dtau^* b
        + b \varrho_2 (1+\gl\cdot b) \dtau b,
  \end{split}   
\end{equation}     
resp.
\begin{equation}
-b (\varrho_1 \etabar+\varrho_2 \eta) (-b (\varrho_1 \etabar+\varrho_2\eta) b) \dot = 
    |\eta|^2 b\cdot (\vrho b \varrho_2 +\varrho_2 b \vrho)\cdot b. 
\end{equation}    
Summing up we have
\[\begin{split}
    b_{-4} & = \bl 2 k^2 |\eta|^2 b - 1-\eps_1-\eps_2 \br b k^2 \Lapl_\tau b 
             + \gl b k^2 \bl (\dtau^*b) \dtau b + (\dtau b) \dtau^*
              b\br \\
           &\quad  
               - b \bl \varrho_1 \dtau^* b+ \varrho_2 \dtau b\br
               + b \varrho_1 (1+\gl b)\dtau^* b + b \varrho_2 (1+\gl b)\dtau b \\
           &\quad + |\eta|^2 b\cdot (\vrho b \varrho_2 +\varrho_2 b
              \vrho)\cdot b - b a_0 b\\
           & =  \bl 2 k^2 |\eta|^2 b - 1 - \eps_1-\eps_2 \br b k^2 \Lapl_\tau b 
               + \gl b k^2 \bl (\dtau^*b) \dtau b + (\dtau b) \dtau^* b\br \\
           &\quad 
                + \eps_1 \gl b (\dtau k^2)   b \dtau^* b 
                + \eps_2 \gl b (\dtau^* k^2) b \dtau b
                + |\eta|^2 b\cdot (\vrho b \varrho_2 +\varrho_2 b
              \vrho)\cdot b - b a_0 b,
\end{split}\]
and the Theorem is proved.\hfill\qed

\section{The integral of $b_{-4}$ and the proof of Theorem \ref{TbInt} } \label{b-4}

In the sequel we use decorators $\eps_1,\eps_2$ to emphasize the
dependence of an object on the parameter values $\eps_1,\eps_2$. 
E.g. $F_{\eps_1,\eps_2}$ denotes the
universal function $F$ for the multiplier $P_{\eps_1,\eps_2}$, etc.

\MLrevision\mpar{major change; material moved from proof of 
Thm 5.3}
We will always have to integrate a symbol depending
only on $|\eta|^2$ over $\R^2$. By change of variables we have 
for such a function (\cf\Eqref{EqResExpIntegral1})
\begin{equation}\label{EqResExpIntegral5}
  \int_{\R^2} f(|\eta|^2) \dxi = \frac{1}{4\pi|\Im\tau|} \int_0^\infty
  f(r) dr.
\end{equation}

The existence of the asymptotic expansion in Theorem \ref{TbInt}
follows from Theorem \ref{TResExp} by the usual contour integral
argument. Namely,
\begin{equation} \label{EqContourIntHeat}
     e^{-t P} = \frac1{\tpii} \int_{C} e^{-t\gl} (\gl-P)^{-1} d\gl
     = - \frac1{\tpii \, t} \int_{C} e^{-t\gl} (\gl-P)^{-2} d\gl
\end{equation}
where $C$ is the usual contour encircling the positive real axis.
One now argues as in \cite[Sec.~1.8]{Gil:ITH} to translate the
resolvent expansion into the heat expansion. Concretely, one gets
for $j>0$
\begin{equation}\label{eq.rev3}
   a_{2j}(a,P)  = \frac1{\tpii} \int_C e^{-t\gl} \int_{\R^2}
     \psi\bl a b_{-2j}(\eta) \br \dxi d\gl.
\end{equation}
For the constant term we find by \Eqref{EqResExp1}, 
and Theorem \ref{TEffTrace}
\begin{align*} 
   a_0(a,P) & = - \frac1{\tpii} \int_C e^{-t\gl} \int_{\R^2}
     \psi\bl a (k^2 |\eta|^2-\gl)^{-2} \br \dxi d\gl\\
    & = \frac{-1}{\tpii \cdot 4\pi |\Im\tau|} \int_C e^{-t\gl} \int_0^\infty
      \psi\bl a ( k^2 r -\gl)^{-2}\br dr d\gl \\
    & = \frac{1}{4\pi |\Im\tau| } \psi\bl a k^{-2} 
           \frac{-1}{\tpii}
         \int_C e^{-t\gl} \gl\ii d\gl \br \\
    & = \frac{1}{4\pi |\Im\tau| }\psi ( a k^{-2} ),
\end{align*}
\cf \Eqref{EqResExpIntegral5}.

It remains to compute $a_2(a,P)$.  In view of \Eqref{eq.rev3} this can be done  by
integrating each summand of $b_{-4}$ and then applying the Rearrangement Lemma
\cite[Lemma 6.2]{ConMos2011}, \cite{Les2014}. We will adopt a slightly
different approach here which will free us from
calculating some of the more tedious integrals and which at the same
time shows several a priori relations between the functions $F,
G^{\Re/\Im}$ for various multipliers. First, we prove
the Theorem \ref{TbInt} for $P_\gamma=k^2 \Lapl_\tau \oplus \bl k^2 \Lapl_\tau
+ (\dtau k^2) \dtau^* \br$\MLrevision\mpar{parenthesis was missing} and the \emph{graded} trace. 
That is we show that the second heat coefficient of 
\[
   \Tr_{\psi,\gamma}\bl a e^{-t P_\gamma} \br
   = \Tr_\psi \bl a e^{-t k^2 \Lapl_\tau} \br - \Tr_\psi\bl
      e^{-t (k^2 \Lapl_\tau + (\dtau k^2) \dtau^*)} \br
\]
is of the form \Eqref{eq.rev3} and we need to identify the corresponding
functions $F, G^\Re, G^\Im$.

Then by employing a simple symmetry argument we will derive a priori
relations between the functions $F_{1,1}$ and $F_{0,0}$ resp.
$G^{\Re}_{1,1}$ and $G_{0,0}$, \Eqref{EqConjA}, \eqref{EqConjB}. 
This will allow us to express the
functions for all parameter values of $(\eps_1,\eps_2)$ in terms of
those for the graded case.

\subsection{Some concrete integrals}

For the evaluation of the integrals we use freely the notation
and results of \cite[Sec.~4,5]{Les2014}. Actually, from this
paper we will only need a few explicit integrals which
can be evaluated using the residue calculus. In order to
be self-contained, we collect here the relevant formulas.

Let $p,m\in \Z_+, p\ge 1$, and $\ga\in\Z_+^p$ a
multiindex. Then for $s\in(\C\setminus \R_-)^p$ consider the
integral
\begin{align}
    H_{\ga}^{(p)}(s,m) &:= \int_0^\infty x^m (1+x)^{-\ga_0-1}
                            \prod_{j=1}^p (x+s_j)^{-\ga_j-1} dx
                                \label{EqHIntA}\\
           & = \int_0^\infty x^{|\ga|+p-1-m} (1+x)^{-\ga_0-1}
                            \prod_{j=1}^p (1+s_jx)^{-\ga_j-1} dx
                            \label{EqHIntB}\\
           & = (-1)^{m+|\ga|+p-1} [1^{\ga_0+1},
s_1^{\ga_1+1},\ldots,s_p^{\ga_p+1}] \id^m \log.
\end{align}
Here, $[1^{\ga_0+1}, s_1^{\ga_1+1},\ldots,s_p^{\ga_p+1}] \id^m \log$
stands for the divided difference of order $|\ga|+p+1$ of the function
$x \mapsto x^m \log x$,  where the $1$ is repeated $\ga_0+1$ times, $s_1$
is repeated $\ga_1$ times etc. We will only need the cases $p=1, m=0$
and $p=2, m\in \{0,1\}$. Furthermore, we will always have $\ga_p=0$.

\subsubsection{$p=0, m=0, \ga_0=\ga, \ga_1=0$}
Then, \cf \cite[Sec.~5]{Les2014}, 
\begin{equation}\label{EqOneVarInt}
  \begin{split}
    H_{(\ga,0)}^{(1)}(u,m=0) & = (-1)^\ga [1^{\ga+1},u] \log\\
      & = \frac{(-1)^\ga}{(u-1)^{\ga+1}} 
      \Bl \log u - \sum_{j=1}^m 
          \frac{(-1)^{j-1}}j (u-1)^j \Br =:\cL_\ga(u),
\end{split}
\end{equation}  
which is the modified logarithm introduced in \cite[Sec.~3 and 6]{ConTre2011}.

\subsubsection{$p=2, \ga_2=0$} Using either the calculus of divided
differences or by direct verification the function
$H_{(\ga_0,\ga_1,0)}^{(2)}(u,v,m=0)$ can be expressed in terms of
the modified logarithm, \cf\cite[Eq.~5.1]{Les2014}:
\begin{equation}\label{EqTwoVarIntA}
    H_{(\ga_0,\ga_1,0)}^{(2)}(u,v,m=0) = \frac{(-1)^{\ga_1+1}}{\ga_1!}
\pl_u^{\ga_1} \Bl \frac 1{v-u} \bl \cL_{\ga_0}(v) - \cL_{\ga_0}(u)\br \Br. 
\end{equation}

Finally, for $m=1$ and $\ga_1>0$ one infers from
\[
      (\ga_1+ u\pl_u) (1+xu)^{-\ga_1} = \ga_1 (1+ux)^{-\ga_1-1}
\]
and differentiation under the integral \Eqref{EqHIntB} that
\begin{equation}\label{EqTwoVarIntB}
    H_{(\ga_0,\ga_1,0)}^{(2)}(u,v,m=1) =  
    \frac 1{\ga_1} (\ga_1+u \pl_u) H_{(\ga_0,\ga_1-1,0)}^{(2)}(u,v,m=0).
\end{equation}


\subsection{The contributions of the summands of $b_{-4}$}

Each summand of $b_{-4}$ in Theorem \ref{TResExp} is of the form
\[
  k^2  f_0(|\eta|^2 k^2) \cdot a\cdot f_1(|\eta|^2 k^2 ) 
\]
or of the form 
\[
  k^2  f_0(|\eta|^2 k^2) \cdot a\cdot f_1(|\eta|^2 k^2 ) 
   \cdot b \cdot f_2(|\eta|^2 k^2 ). 
\]
In the first case integration over $\R^2$ with respect to $\dxi$ and
application of the Rearrangement Lemma (\cite[Lemma 6.2]{ConMos2011},
\cite{Les2014}) yields (up to the factor in front)
\[
  \int_0^\infty k^2 f_0(x k^2 ) \cdot a \cdot f_1(x k^2) dx
   = F(\modu)(a)
\]
with $F(u) = \int_0^\infty f_0(x) f_1(xu) dx$.

In the second case we have similarly
\[
  \int_0^\infty k^2 f_0(x k^2 ) \cdot a \cdot f_1(x k^2) \cdot
  b\cdot f_2(x k^2) dx = G(\modua,\modub)(a\cdot b)
\]
with 
\[
  G(u,v) =  \int_0^\infty f_0(x) f_1(xu) f_2(x u v) dx.
\]
Often we will have $\modu(a)$ instead of $a$. Note that
$G(\modua,\modub)(\modu(a)\cdot b) = \tilde G(\modua,\modub)(a\cdot
b)$ with $\tilde G(u,v)=u G(u,v)$. 

\subsection{The summands \Eqref{EqResExp6} of $b_{-4}$}
We first discuss the terms in the last line
\Eqref{EqResExp6} of the expression for $b_{-4}$ 
in Theorem \ref{TResExp}. As usual (\cf\cite[Sec.~6]{ConMos2011}) we
put $\gl = -1$. Furthermore, by slight abuse of notation
we also write $b(x)=(1+x)\ii$ under the integral. This is
justified since each term $(k^2|\eta|^2-\gl)\ii$ in $b_{-4}$
contributes, for $\gl=-1$, to a factor 
$b(x)=(1+x)\ii$ under the integral on the right of
\Eqref{EqResExpIntegral5}. In all integrals in the sequel
we will tacitly omit the overall factor $\frac1{4\pi |\Im\tau|}$.

\subsubsection{The summand $-b a_0 b$}
The total integral of $-b a_0 b$ equals, \cf \Eqref{EqOneVarInt},
\begin{equation}
  -\int_0^\infty b(x) a_0 b(x) dx = - \cL_0(\modu)(k^{-2} a_0).
\end{equation}

\subsubsection{The summand
$|\eta|^2 b\cdot \bl (\dtau k^2) b (\dtau^* k^2)
               +(\dtau^* k^2) b \dtau k^2 \br \cdot b$}
We move all $k$--powers to the left and rewrite this as
\[
    \ldots =  
     |\eta|^2 k^4 b\cdot \bl \modu (k^{-2} \dtau k^2) b
       (k^{-2} \dtau^* k^2) +\modu(k^{-2}\dtau^* k^2) b
       (k^{-2} \dtau k^2) \br \cdot b.
\]       
The total integral therefore equals
\[
  2 \modua g_3(\modua,\moduab)(\tsRk)
    =: G_3(\modua,\modub)(\tsRk)
\]
with
\begin{align}
    g_3(u,v) & = 2 \int_0^\infty x b(x) b(xu) b(xv) dx \nonumber \\
     & = 2 H_{(0,0,0)}^{(2)}(u,v) = -\frac{2}{v-u}
  (\cL_0(v)-\cL_0(u)),
\end{align}
(\cf \Eqref{EqTwoVarIntA}) resp.
\begin{equation}\label{EqG3}
  G_3(u,v)  = u g_3(u,uv)
    = 2 \frac{(uv-1)\log(u) - (u-1) \log(uv)}{(u-1)(v-1)(uv-1)}.
\end{equation}

\subsection{Proof in the graded case}
For the multiplier $P_\gamma$
($\eps_1=1, \eps_2=0$) and the graded trace several summands of
$b_{-4}$ vanish and it remains
\[
  b k^2 \Lapl_\tau b + b \dtau k^2 b \dtau^* b.
\]

\subsubsection{$b k^2 \Lapl_\tau b$}
For the first summand we find
\begin{equation}
  \begin{split}
     b k^2 \Lapl_\tau b  = b k^2 \dtau \dtau^* b 
      &  = - |\eta|^2 k^4 b^2 k^{-2}( \Lapl_\tau k^2) b  \\
      &\quad + |\eta|^4 k^6 b^2 \modu(k^{-2}\dtau k^2) b (
     k^{-2}\dtau^* k^2) b \\
      &\quad + |\eta|^4 k^6 b^2 \modu(k^{-2}\dtau^* k^2 )b (k^{-2}\dtau k^2) b.
      \end{split}
\end{equation}      

The total integral of $- |\eta|^2 k^4 b^2 k^{-2}( \Lapl_\tau k^2) b$
equals $F_\gamma(\modu)(k^{-2} \Lapl_\tau k^2)$, where,
\cf \Eqref{EqTwoVarIntA},
\[
  F_{\gamma}(u) = - \int_0^\infty x b(x)^2 b(xu) dx 
  = - \cL_1(u) 
  = \frac{\lgu - u +1}{(u-1)^2}.
\]

The remaining two summands of $\Lapl_\tau b$ contribute a summand
\[
\modua g_1(\modua,\moduab)(\tsRk)=:G_1(\modua,\modub)(\tsRk)
\]
to $G^\Re$, where
\begin{align}
       g_1(u,v) & = 2 \int_0^\infty x^2 b(x)^2 b(xu) b(xv) dx \\
        & = 2 H_{(1,0,0)}^{(2)}(u,v) =
          \frac{-2}{v-u}(\cL_1(v)-\cL_1(u)),
\end{align}
(\cf \Eqref{EqTwoVarIntA}) resp.
\begin{align}
  G_1(u,v) & = u g_1(u,uv) 
     = \frac{2}{v-1} (F_{\gamma}(uv)-F_{\gamma}(u)) \nonumber\\
     & = \frac{-2 u ( uv+u-2)\lgu}{(u-1)^2(uv-1)^2}+\frac{2\lgv}{(v-1)(uv-1)^2}
     +\frac{2u}{(u-1)(uv-1)}.\label{EqG1}
\end{align}

\subsubsection{$b \dtau k^2 b \dtau^* b$}
Writing $\dtau^* b = - |\eta|^2 b (\dtau k^2) b$ the total integral of
this summand equals 
$\modua g_2(\modua,\moduab) (\tsRk+ \tsIk)
=: G_2(\modua,\modub)(\tsRk+\tsIk)$.
Since this is the only summand which is not symmetric in $\dtau k^2$
and $\dtau^* k^2$ it is the only one contributing to $G_\gamma^\Im$.
Thus we have $G_\gamma^\Im(u,v) = G_2(u,v)= u g_2(u,uv)$ 
and this function contributes another summand to 
$G_\gamma^\Re$. Explicitly,
\begin{align*}
    g_2(u,v) & = - \int_0^\infty x b(x) b(xu)^2 b(xv) dx \\
      & = - H_{(0,1,0)}^{(2)}(u,v,m=1)\\
      & = - (1 + u \pl_u) H_{(0,0,0)}^{(2)}(u,v,m=0)
    = (1+u\pl_u) \frac{1}{v-u} (\cL_0(v) -\cL_0(u)).
\end{align*}  
In the last line we have used \Eqref{EqTwoVarIntB}. Thus
\begin{align}
  G_2(u,v)& = G_\gamma^\Im(u,v) = u g_2(u,uv) \nonumber\\
    & = \frac{v\lgv}{(v-1)^2(uv-1)}
               + \frac{u\lgu}{(u-1)^2(uv-1)}
               - \frac{1}{(u-1)(v-1)}.\label{EqG2}
\end{align}

The proof for $P_\gamma$ and the graded trace is thus complete.
Summing up we have:       
\begin{align}
  F_\gamma(u) & = \frac{\lgu - u +1}{(u-1)^2} \label{EqFgamma}\\
      G^\Im_\gamma(u,v) & = 
                   \frac{u\lgu}{(u-1)^2(uv-1)}
                  +  \frac{v\lgv}{(v-1)^2(uv-1)}
                  - \frac{1}{(u-1)(v-1)} \label{EqGIgamma}\\
      G^\Re_\gamma(u,v)
          & = G_1(u,v) + G_\gamma^\Im(u,v)\nonumber \\
          & =  \frac{2}{v-1} (F^{\gamma}(uv)-F^{\gamma}(u))
                       + G^\Im_\gamma(u,v) \nonumber\\
                       & =        
                  -\frac{u (uv+2u-3)\lgu}{(u-1)^2(uv-1)^2}
               +\frac{(uv^2+v-2)\lgv }{(v-1)^2(uv-1)^2} \nonumber \\
               &\quad  + \frac{uv-2u+1}{(u-1)(v-1)(uv-1)}.
               \label{EqGRgamma}
\end{align}  

\subsection{Proof in general by reduction to the graded case} 
Integrating each summand of $b_{-4}$ over $\R^2$ and applying the
Rearrangement Lemma (\cite[Lemma 6.2]{ConMos2011}, \cite{Les2014}) the
existence of the functions $F,G^\Re, G^\Im$ is clear in general. It remains to
find an explicit expression for them which will then also prove the remaining
properties. 

\subsubsection{Conjugation argument} 
\label{SSSConjArg}
Note that Leibniz' rule implies
\[
  \modu(P_{0,0}) = k^{-2} k^2 \Lapl_\tau k^2 = P_{1,1} +
  \Lapl_\tau(k^2).
\]  
Hence (\cf Remark \ref{r.trace_psido})
\MLrevision\mpar{trace corrected}
\[
    \Tr_\psi \bl a e^{-t P_{1,1}} \br 
  = \Tr_\psi \bl (k^2 a k^{-2}) 
      e^{-t (P_{0,0} - k^2 (\Lapl_\tau k^2) k^{-2})}\br
\]
and comparing with \Eqref{EqbInt} for $P_{1,1}$ and $P_{0,0}$
yields the a priori relations
\begin{align}
  F_{1,1}(u) & = u F_{0,0}(u) + \frac{\lgu}{u-1},\label{EqConjA}\\
    G_{1,1}^\Re(u,v) & = uv\cdot G^\Re_{0,0}(u,v),\label{EqConjB}
\end{align}  
resp. comparing with \Eqref{EqbIntCor} yields the a priori relations
\begin{align}
  K_{1,1}(s) & = e^s F_{0,0}(s) + 1\label{EqConjKA}\\
    H_{1,1}^\Re(s,t) & = e^{s+t}\cdot H^\Re_{0,0}(s,t)
        + 2 \frac{(uv-1)\lgu-(u-1)\lguv}{(uv-1)\lgu\lgv} \nonumber\\
     & = e^{s+t}\cdot H^\Re_{0,0}(s,t)
        + 2 \frac{(e^{s+t}-1)s-(e^s-1)(s+t)}{(e^{s+t}-1)s t}.\label{EqConjKB}
\end{align}  

\subsubsection{The one variable function $F$} The first summand
\Eqref{EqResExp3} of $b_{-4}$ is the only summand 
which contributes to the one variable function $F$. Therefore,
\begin{equation}
    F_{\eps_1,\eps_2} = F_{0,0} - (\eps_1+\eps_2) F_\gamma.
\end{equation}
Applying this and \Eqref{EqConjA} to $F_{1,1}$ we find
\[
  u F_{0,0}(u) + \frac{\lgu}{u-1} = F_{1,1} = F_{0,0} - 2 F_\gamma,
\]
and solving for $F_{0,0}$ we obtain
\begin{equation}\label{EqF}
    F_{\eps_1,\eps_2}(u)  = -\frac{2+(\eps_1+\eps_2)(u-1)}{u-1} F_\gamma(u) -
  \frac{\log(u)}{(u-1)^2}.
\end{equation}  

\subsubsection{The two variable functions} 
The terms in \Eqref{EqResExp5} are the only ones which contribute
to $G^\Im_{\eps_1,\eps_2}$, thus
\begin{equation}\label{EqGI}
  G^\Im_{\eps_1,\eps_2}(u,v) = G^\Im_{\eps_1-\eps_2,0} =
  (\eps_2-\eps_1)\cdot G_\gamma^\Im.
\end{equation}
Similarly for $G^\Re$
\begin{align}
  G^\Re_{\eps_1,\eps_2}  
     & = G^\Re_{\eps_1+\eps_2,0} + \eps_1\eps_2\cdot G_3,\\
  G^\Re_{\eps,0} & = G_{0,0}^\Re - \eps G_\gamma^\Re.
\end{align}
As for $F$ this allows to compute $G_{0,0}^\Re$ using \Eqref{EqConjB}:
\[
  uv G_{0,0}^\Re(u,v) = G_{2,0}^\Re(u,v) + G_3(u,v)
     = G_{0,0}(u,v) - 2 G_\gamma^\Re(u,v) + G_3(u,v),
   \]
and hence
\begin{equation}\label{EqGR}
  G^\Re_{\eps_1,\eps_2} (u,v)= -
  \frac{2+(\eps_1+\eps_2)(uv-1)}{(uv-1)}G^\Re_\gamma(u,v)
    + \frac{1+\eps_1\eps_2(uv-1)}{uv-1}G_3(u,v).
\end{equation}

\subsection{Explicit formulas in terms of $\log k^2$}
\newcommand{\css}{\cosh(s)}
\newcommand{\sss}{\sinh(s)}
\newcommand{\cst}{\cosh(t)}
\newcommand{\sst}{\sinh(t)}
\newcommand{\ssst}{\sinh(s+t)}
\newcommand{\cssh}{\cosh(\frac s2)}
\newcommand{\sssh}{\sinh(\frac s2)}
\newcommand{\csth}{\cosh(\frac t2)}
\newcommand{\ssth}{\sinh(\frac t2)}
\newcommand{\sssth}{\sinh(\frac {s+t}2)}
In view of \Eqref{EqFK}, \eqref{EqGHR}, and \eqref{EqGHI}
the formulas \Eqref{EqF}, \eqref{EqGR}, and \eqref{EqGI}
can immediately be translated into formulas
for the functions $K, H^\Re, H^\Im$.
\begin{align}
  K_\gamma(u) 
  & = \frac{u-1}{\lgu} F_\gamma(u) \label{EqKgamma}
           = \frac{\lgu -u +1}{\lgu (u-1)}\\
      & = \frac1{e^s-1} - \frac 1s  = \frac 12  \coth(s/2) -
           \frac1{2s} -\frac 12.\nonumber\\
           K_{\eps_1,\eps_2}(s) \label{EqKeps}
     & = - \frac{2+(\eps_1+\eps_2)(u-1)}{u-1} K_\gamma(s) - \frac1{u-1} \\
     & = - \frac{2+(\eps_1+\eps_2+1)(u-1)}{(u-1)^2}
         + \frac{2+(\eps_1+\eps_2)(u-1)}{s(u-1)}\nonumber\\
         H^\Im_\gamma(s,t)  &= \label{EqHIgamma}
         \frac{v(u-1)}{s(v-1)(uv-1)}
         + \frac{u ( v-1)}{t (u-1)(uv-1)} - \frac1{st}\\
H_{\eps_1,\eps_2}^\Im(s,t) & = (\eps_2-\eps_1) H_\gamma^\Im(s,t)
            \label{EqHIeps} \\
H^\Re_\gamma(s,t) \label{EqHRgamma}
      & = H_\gamma^\Im(s,t)\\
      & \quad - \frac{2 u (v-1)}{(uv-1)(u-1) t }  +
  \frac{2}{s(s+t)}\nonumber \\
      & = 
         \frac{t (u-1) v}{s(s+t) (v-1)(uv-1)}
          - \frac{s u (v-1)}{t (s+t) (u-1)(uv-1)} \nonumber\\
      &\quad    + \frac{u-v}{(s+t) (u-1)(v-1)} + \frac{t-s}{st (s+t)}
          \nonumber\\
          H^\Re_{\eps_1,\eps_2}(s,t) \label{EqHReps}
  & = - \frac{2+ (\eps_1+\eps_2)(uv-1)}{uv-1} H_\gamma^\Re(s,t)\\
     &\quad + 2 \eps_1\eps_2 \cdot
\frac{(uv-1)s-(u-1)(s+t)}{s\cdot t\cdot (uv-1)}\nonumber
\end{align}  

Note in particular, that
\begin{equation}\label{EqHrel}
  H_{1,0}^\Re(s,t) = \frac{1+uv}2 \cdot H_{0,0}(s,t).
\end{equation}

\subsection{Comparison with \cite{ConMos2011}}
\label{SSComparison}
\subsubsection{Degree $0$}
When comparing with \cite{ConMos2011} one has to take into account
that in degree $0$ they work with $k\Lapl k = \modu^{1/2}(k^2 \Lapl)$ 
instead of  $k^2\Lapl$; recall that $\modu = k^{-2}\cdot k^2$
denotes the modular operator and $\Lapl$ the Laplacian. 
This means that their one variable functions must be
multiplied by $u^{-1/2}$ and their two variable functions must
be multiplied by $(uv)^{-1/2}$ before comparing with our functions.
Also note that there are slightly different sign and normalization conventions
which lead to an overall factor $-2$ for one variable functions 
resp. $-4$ for two variable functions.

The adjusted functions of \cite{ConMos2011} are
\newcommand{\CM}{{\textup{CM}}}
\begin{align}
  K_0^\CM(s) & = \frac{2(2+e^s(s-2)+s)}{s (e^s-1)^2}
                = 2\frac{(u+1) \lgu -2u+2}{\lgu (u-1)^2}\\
             & = - 2 K_{0,0}(s)\nonumber   \\
H_0(s,t)^\CM  & = e^{-(s+t)/2} \cdot \\ 
     &\qquad \frac{\begin{subarray}{l}
       t(s+t)\cosh(s) - s (s+t)\cosh(t)\\
       \qquad\qquad    +(s-t)\bl s+t+\sinh(s)+\sinh(t)-\sinh(s+t)\br
       \end{subarray}}{%
                    st(s+t)\sinh(s/2)\sinh(t/2)\sinh^2((s+t)/2)}
           \nonumber \\
    & =\nonumber 
    8\cdot \frac{ \begin{subarray}{r} t^2 (u-1)^2 v - s^2 u (v-1)^2 + st (u-v)(uv-1)\\ 
                 \qquad\qquad+ (t-s)(u-1)(v-1)(uv-1)
             \end{subarray}}{%
                  s t (s+t) (u-1)(v-1)(uv-1)^2}    \\
                  &=  \nonumber
    \frac8{uv-1} H^\Re_\gamma(s,t)= - 4 H^\Re_{0,0}(s,t).
\end{align}

\subsubsection{Degree $1$} 
From \cite[Sec.~3.1.1]{ConMos2011} we deduce 
$2 K_1^\CM = K_0^\CM - K_\gamma^\CM$ (here $K_0^\CM$ is the unadjusted function from
loc.~cit.), hence
\begin{align}
    K_1^\CM(s) & = \frac{4 s u - 2 u^2 +2}{s (u-1)^2} = -2
  K_{1,0}(s).\\
   H_1^\CM(s,t) & = \cosh(\frac{s+t}2) \cdot H_0^\CM(s,t) 
  = -4 \frac{uv+1}2 H_{0,0}^\Re(s,t) = -4 H_{1,0}^\Re(s,t),
\end{align}
\cf\cite[Eq.~3.3]{ConMos2011} and \Eqref{EqHrel}.
\begin{align}
  S^\CM(s,t) & = \frac{s+t-t\css-s\cst - \sss-\sst-\ssst}{%
  s t \sssh \ssth \sssth } \\
    & = 4 \Bigl[
       \frac1{ st} - \frac{(u-1)v}{s(v-1)(uv-1)}
          - \frac{u(v-1)}{t(u-1)(uv-1)}\Bigr]\nonumber \\
         & = - 4 H_\gamma^\Im(s,t) = 4 H_{1,0}^\Im(s,t). \nonumber
\end{align}
\section{Effective pseudodifferential operators and resolvent}
\label{SEffPsiDO}

In this section we will use the notation of the Appendix 
\ref{AppA}, in particular \ref{SSAppLattice}. The algebra
$\sB$ introduced in \ref{SSAppLattice} may be thought of
being $A_\gt$ or $A_{\gt'}$. 

The effective implementation of the pseudodifferential calculus amounts to
passing from its realization on multipliers to a direct action on Heisenberg
modules (or on the Hilbert space $L^2(\sB,\vfB)$ itself).  More concretely,
let $\spi:G\to \cL(\cH)$ be a projective unitary representation of
$G=\R^n\times(\R^n)^\wedge$. \footnote{For definiteness we discuss the
pseudodifferential calculus on $\R^n$ only. The extension for groups of the
form $\R^n\times F$, with $F$ finite, does not lead to any new aspects.} For a
symbol $f\in \SmB$ the assignment
\[
  \Op(f):= \int_G f^\vee(y) \spi(y) dy
\]
provides a homomorphism of the algebra $L^m_\gs(G,\Binf)$ into the (unbounded)
operators in the Hilbert space $\cH$, it is clearly a $*$--representation of
the algebra of pseudodifferential multipliers of order $\le 0$ on $\cH$. We
will be interested in two cases:
\begin{enumerate}
\item $\spi=\tilde\pi=\pi_0$ resp. $\spi=\pi_w$,
 where $\tilde\pi$ is the representation defined
at the end of example \ref{SAppExample} and 
$\pi_0, \pi_w$ are the representations defined in \Eqref{pi-0},
\eqref{eq:1209061}.
\item $\spi=\tilde\ga$ is the unitary representation of $\R^2$ on
the Hilbert space $L^2(\sB,\vfB)$ induced by the normalized action
$\tilde\ga$ on $\sB$, \cf Sec.~\ref{AppB}.
\end{enumerate}
During this section, unless otherwise said, $\spi$ denotes one of
these two representations. The operator convention $\Op$ refers
to $\spi$ as explained above.

It is well-known that the singular support of
the Fourier transform of a symbol is contained in $\{0\}$.
This extends easily to $\vfB$--valued symbols. 
\begin{lemma}\label{LSymbolEstimates}
\textup{1. } Let $f\in\SmB$ be a symbol of order $m$ and let
$\chi\in\cinf{\R^n}$ be a smooth function which vanishes for
$|x|\le \delta_1$ and which is constant $1$ for $|x|\ge \delta_2$.
Then $\chi \hat f\in\sS(\R^n,\Binf)$.

\textup{2. } If $f\in \sym^m(\R^n\times\Gamma,\Binf)$ 
is a parameter dependent symbol then 
$\chi\hat f\in \sS(\R^n\times \Gamma,\Binf)$.
\end{lemma}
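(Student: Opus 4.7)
The statement is a quantitative expression of the classical fact that the Fourier transform of a symbol has singular support at the origin only. The proof is a standard integration-by-parts argument for oscillatory integrals, carried out seminorm by seminorm in $\Binf$.

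First I would interpret $\hat f$ as a tempered $\Binf$--valued distribution; for $f\in\SmB$ this is well-defined, since in each seminorm $\|\cdot\|_\gamma$ of $\Binf$ the function $\|f(\cdot)\|_\gamma$ is a scalar symbol of order $m$, hence of polynomial growth. The goal is to show that on the open set $\{x:|x|>\delta_1\}$ the distribution $\hat f$ is represented by a rapidly decreasing smooth $\Binf$--valued function.

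The core computation exploits the identity $|x|^{2N} e^{-i\inn{x,\xi}} = (-\Delta_\xi)^N e^{-i\inn{x,\xi}}$. Integration by parts yields formally
\[
  |x|^{2N}\,\hat f(x) \,=\, \widehat{(-\Delta_\xi)^N f}(x), \qquad |x|>0.
\]
Since $(-\Delta_\xi)^N f\in \operatorname{S}^{m-2N}(\R^n,\Binf)$, for $2N>m+n$ this is absolutely integrable in every seminorm of $\Binf$, and the right-hand side is a bounded continuous $\Binf$--valued function. To get derivatives in $x$, use $\pl_x^\beta \hat f=\widehat{(-i\xi)^\beta f}$ and observe that $(-i\xi)^\beta f \in \operatorname{S}^{m+|\beta|}(\R^n,\Binf)$; applying the same identity with $2N>m+|\beta|+n$ gives $\|x^\alpha \pl_x^\beta \hat f(x)\|_\gamma=O(|x|^{|\alpha|-2N})$ on $|x|\ge\delta_1$, uniformly in $\alpha,\beta,\gamma$ once $N$ is chosen large. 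Multiplication by $\chi$, which vanishes on $|x|<\delta_1$, extends the estimate to all of $\R^n$, proving $\chi\hat f\in \sS(\R^n,\Binf)$.

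For part 2, the same argument applies verbatim: the parameter-dependent symbol estimates
\[
  \|\delta^\beta \pl_\xi^\gamma f(\xi,\gl)\|\le C_{\beta,\gamma,\gamma'}
           (1+|\xi|+|\gl|^{1/d})^{m-|\gamma|}
\]
are uniform in $\gl\in\Gamma$ (with $\gl$ of degree $d$), so the resulting Schwartz estimates on $\chi\hat f$ are uniform in $\gl$, and $\chi\hat f \in\sS(\R^n\times\Gamma,\Binf)$. The only technical point worth flagging is that $f$ itself need not be integrable, so the formal identity $\hat f(x)=\int e^{-i\inn{x,\xi}} f(\xi)\,d\xi$ is rigorous only after enough integrations by parts; this is precisely why one works with $(-\Delta_\xi)^N f$ from the outset. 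No new idea is required beyond the scalar case, since the Fr{\'e}chet structure on $\Binf$ reduces everything to estimates in individual seminorms $\|\cdot\|_\gamma$.
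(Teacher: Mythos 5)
Your argument is correct and is essentially the paper's proof: both trade powers of the Fourier variable for derivatives of the symbol via integration by parts in the oscillatory integral (your use of $(-\Delta_\xi)^N$ to produce $|x|^{2N}$ versus the paper's $\pl^\ga$ producing monomials is immaterial), with the cutoff $\chi$ disposing of the neighbourhood of the origin where these estimates give no control. The only point to sharpen is in part 2: membership in $\sS(\R^n\times\Gamma,\Binf)$ requires \emph{joint} rapid decay in $(x,\gl)$, not mere uniformity in $\gl$; your displayed parameter-dependent symbol estimates do deliver this, since integrating the bound $(1+|\xi|+|\gl|^{1/d})^{m+|\beta|-2N}$ over $\xi$ yields $O\bigl((1+|\gl|^{1/d})^{\,n+m+|\beta|-2N}\bigr)$, which is $O(\gl^{-\infty})$ as $N\to\infty$ --- exactly what is used later to discard the nonzero lattice points in the trace formula.
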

For the parameter dependent symbols see the remarks at
the beginning of Sec.~\ref{SSPsiDO}.
\begin{proof} The proof is standard. We just sketch the main
step in the more general case 2. Let $\ga,\gb$ be multiindices,
$\gamma\in \Z_+$, and let $p$ be a seminorm on $\Binf$.
We may for convenience assume that the parameter $\gl$ in
the parameter dependent calculus is treated as a covariable of
order $1$. Otherwise replace $\gl$ by $\gl^{1/\operatorname{ord}(\gl)}$.
Then 
\begin{align*}
   p\Bl \xi^\ga \pl_\xi^\gb \pl_\gl^\gamma \hat f(\xi,\gl) \Br
      & = p \Bl 
     \int_{\R^n} e^{-i \inn {x,\xi }} \pl_x^\ga \bl x^\gb \pl_\gl^\gamma
                    f(x,\gl)\br \, dx      \Br \\
          & \le C \int_{\R^n}   (1+|x|+|\gl|)^{m -|\gamma|-|\ga|+|\gb|} dx \\
      & \le C (1+|\gl|)^{n+m-|\gamma|-|\ga|+|\gb|},
\end{align*}
as long as $n+m-|\gamma|-|\ga|+|\gb|< 0$. Since for given $\gamma,\gb$ we
may choose $\ga$ as large as we please the claim follows.
\end{proof}

The next Theorem relates the Hilbert space trace of $\Op\bl f(\cdot,\gl) \br$
to the natural trace on the pseudodifferential multipliers
(\cf Remark \ref{r.trace_psido}).\MLrevision\mpar{Sentence inserted}
 
\begin{theorem}\label{TEffTrace} Let $n=2$ and let
$f\in\sym^m(\R^2\times\Gamma,\Binf)$ be a parameter dependent symbol of order
$m<-n$. Then $\Op(f)$\MLrevision\mpar{$\pi(f)$ replaced by $\Op(f)$} is of trace class. Moreover, there is a constant
$N(\spi)$ depending only on the representation $\spi$ such that
\begin{align*}
   \Tr\Bl \Op\bl f(\cdot,\gl)\br\Br 
    & =  N(\spi) \vfB\bl  f^\vee(0,\gl)\br + O(\gl^{-\infty}), \quad
             \gl\to\infty,\\
    & =  N(\spi) \int_{\R^2} \vfB\bl f(\xi,\gl)\br \dxi + O(\gl^{-\infty}), \quad
             \gl\to\infty.
\end{align*}
We have explicitly,
\[
   N(\spi) = \begin{cases}
             |\rk\cE(g,\gt)| = |c\theta +d|,& \spi=\pi_w,\\
             \rk\cE(g,\gt)^2 = |c\gt+d|^2,& \spi = \tilde \ga.
             \end{cases}
\]
\end{theorem}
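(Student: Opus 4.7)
The plan is to reduce the trace of $\Op(f)$ to the value of $f^\vee$ at the origin, exploiting two structural facts: the singular support of $f^\vee$ is contained in $\{0\}$ by Lemma \ref{LSymbolEstimates}, and the distributional trace of $y \mapsto \spi(y)$ is, modulo $O(\gl^{-\infty})$ after pairing with a parameter-dependent symbol, a multiple of the Dirac mass at $y = 0$ with scale factor $N(\spi)$.

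First I would verify that $\Op(f(\cdot, \gl))$ is of trace class: since $m < -n$ the Schwartz kernel of $\Op(f)$ is continuous, and standard pseudodifferential estimates (essentially as in the untwisted case, since the cocycle $e(x, y)$ is bounded) yield a trace norm bound. Next, I localize near the origin by choosing $\chi \in C_c^\infty(\R^n)$ with $\chi \equiv 1$ in a neighborhood of $0$ and splitting
\[
\Op\bl f(\cdot, \gl)\br = \int_{\R^n} \chi(y) f^\vee(y, \gl) \spi(y) \, dy
  + \int_{\R^n} \bl 1 - \chi(y)\br f^\vee(y, \gl) \spi(y) \, dy.
\]
By Lemma \ref{LSymbolEstimates}, applied with $\gl$ treated as a covariable of positive degree, $(1 - \chi) f^\vee$ lies in $\sS(\R^n \times \Gamma, \Binf)$ jointly in $(y, \gl)$. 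Since $\spi$ is unitary and $\Binf$ acts boundedly on $\cH$, the second operator has trace norm Schwartz in $\gl$, and thus contributes $O(\gl^{-\infty})$ to the trace.

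For the localized piece, I would compute the trace via the Schwartz kernel of $\spi(y)$ on the diagonal. When $\spi = \pi_w$ on $L^2(\R \times \Z_c)$, formula \Eqref{pi-0} exhibits the kernel of $\pi_w(y)$ at $(t, \alpha; t', \alpha')$ as a phase factor times $\delta(t' - t - y_1)$ and a Kronecker symbol in the $\Z_c$-indices (shifted by $y$). Restricting to the diagonal, summing over $\alpha \in \Z_c$, and integrating $t \in \R$ produces $|c|$ from the index sum, $\delta(y_1)$ from the translation, and $|\mu(\cE)|^{-1} \delta(y_2)$ from $\int_\R e^{\tpii \mu(\cE) y_2 t}\, dt$; the total is $|c|/|\mu(\cE)| \cdot \delta(y) = |c\gt + d| \delta(y) = |\rk(\cE)| \delta(y)$, yielding $N(\pi_w) = |\rk(\cE)|$. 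For $\spi = \tilde\ga$ on $L^2(\sB, \vfB)$ the analogous computation produces a sum of Dirac masses on a lattice in $\R^2$; only the mass at $y = 0$ lies in $\operatorname{supp}\chi$ (for $\chi$ with sufficiently small support), the other lattice contributions being absorbed into the $O(\gl^{-\infty})$ remainder through the Schwartz decay of $(1-\chi) f^\vee$. Tracking the normalization of $\vfB$ against the doubled GNS structure of $L^2(\sB, \vfB)$ produces the multiplicity $N(\tilde\ga) = \rk(\cE)^2$. Pairing the trace distribution against $\chi f^\vee$ then reads off the value $N(\spi) \vfB(f^\vee(0, \gl))$; the equivalent integral formula is Fourier inversion.

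The main obstacle is the rigorous identification of the distributional trace of $\spi$ and, most delicately, pinning down the constant $N(\spi)$. For $\pi_w$ this is essentially finite-dimensional book-keeping over the $|c|$ copies of $L^2(\R)$, but for $\tilde\ga$ one must carefully unwind the normalized action of Section \ref{AppB} together with the GNS realization to justify the squaring to $\rk(\cE)^2$ rather than $\rk(\cE)$, and simultaneously confirm that the non-zero lattice points in the trace distribution are suppressed by the support of $\chi$ and the joint rapid decay of $(1-\chi) f^\vee$ in $\gl$.
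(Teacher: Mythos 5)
Your overall strategy is close to the paper's: both arguments reduce $\Tr\bigl(\Op(f(\cdot,\gl))\bigr)$ to a lattice sum of values of $f^\vee$, of which only the term at the origin survives modulo $O(\gl^{-\infty})$ by Lemma \ref{LSymbolEstimates}, and your diagonal computations do reproduce the constants $N(\pi_w)=|c\gt+d|$ and $N(\tilde\ga)=\rk\cE(g,\gt)^2$ (the latter comes from the covolume of the spectral lattice $\tfrac{2\pi}{\rk\cE(g,\gt)}\Z^2$ in the Poisson summation formula, not from any ``doubled GNS structure''). There are, however, two genuine gaps. First, your dismissal of the far piece $\int(1-\chi)(y)f^\vee(y,\gl)\spi(y)\,dy$ rests on the claim that unitarity of $\spi(y)$ and boundedness of the $\Binf$-action yield a trace-norm bound that is Schwartz in $\gl$. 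That inference is invalid: unitarity controls only the operator norm of the integrand, and integrating a rapidly decreasing function against a family of unitaries does not by itself produce a small trace norm. What is actually needed --- and what the paper supplies via Theorem \ref{TAppTraceFormula}(1) --- is that $\pi(g)$, for $g\in\sS(G)$, is an integral operator with Schwartz kernel, whose trace norm is controlled by Schwartz seminorms of $g$; only then does the joint rapid decay of $(1-\chi)f^\vee$ in $(y,\gl)$ convert into an $O(\gl^{-\infty})$ trace-norm estimate. The same kernel estimate is what legitimizes your distributional-trace manipulations in the first place.

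Second, for $\spi=\pi_w$ you compute the trace distribution only of the scalar family $y\mapsto\pi_w(y)$, obtaining a single Dirac mass at $0$. But the symbol is $\Binf$-valued, $f^\vee(y,\gl)=\sum_{\ell\in L}c_\ell(y,\gl)\pi(\ell)$, so $\Op(f)=\sum_{\ell}\int c_\ell(y,\gl)\,\pi(\ell)\pi_w(y)\,dy$, and for $\ell\neq 0$ the family $y\mapsto\pi(\ell)\pi_w(y)$ has its trace distribution supported at a nonzero point of $\R^2$ (or vanishing), not at the origin. To conclude that only $c_0(0,\gl)=\vfB(f^\vee(0,\gl))$ survives, you must locate these supports and push the nonzero ones into the remainder; this is precisely what the contraction $\Phi_L$ and the exact lattice-sum formula $\Tr(\Op(f))=\sum_{\ell\in L}\vfB\bigl(f(\ell,\cdot)\pi(\ell)\bigr)$ of Theorem \ref{TAppTraceFormula} accomplish, and what your proposal leaves implicit. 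Once both points are repaired, your argument essentially coincides with the paper's proof.
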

 \begin{proof} 
We preface the proof with two comments.
\begin{enumerate}
\item  First, the precise threshold $m<-n$ for 
the trace class property is proved as in the case of classical symbols (see \eg
\cite[Sec.~27]{Shu:POS}), and will not be reproduced below. 

\item Secondly,
the rank factor in $N(\tilde \ga)$ is an artifact of the fact that the action
$\tilde\ga$ is normalized such that its infinitesimal generators are the
derivations $\delta'$, \cf\Eqref{Eq052313}.
\end{enumerate}

Given now $f\in\sym^m(\R^2\times\Gamma,\Binf)$ we
extend it to $\R\times\Z_c\times\R\times \Z_c$ by putting
$g(x_1,\ga_1,x_2,\ga_2,\gl):= f(x_1,x_2,\gl)\delta_{\ga_1,0}\delta_{\ga_2,0}$.
Then, by definition, $\Op\bl f(\cdot,\gl)\br = \tilde\pi(f^\vee(\cdot,\gl))$, and
\begin{align*}
   \tilde\pi(f^\vee(\cdot,\gl)) & = \int_{\R^2} f^\vee(x,\gl)
\tilde\pi(x) dx \\
     & = |c|\cdot \int_G g(x_1,\ga_1,x_2,\ga_2,\gl) \pi(x_1,\ga_1,\mu x_2,\ga_2) \,
            d\mu_G(x_1,\ga_1,x_2,\ga_2) \\
     & = \frac {|c|}{|\mu|}\cdot \int_G g(x_1,\ga_1,\frac{x_2}\mu,\ga_2;\gl) 
          \pi(x_1,\ga_1,x_2,\ga_2)  \,d\mu_G(x_1,\ga_1,x_2,\ga_2).
\end{align*}
The constant $|c|$ in the numerator appears since the Haar measure
$\mu_G$ equals $\frac 1{|c|}\gl\otimes\#\otimes\gl\otimes \#$,
\cf Example \ref{SAppExample}.
The trace formula Theorem \ref{TAppTraceFormula} now implies     
\[
  \Tr\bl\Op(f(\cdot,\gl))\br
      = \sum_{\ell\in L} \vfB\bl g(\ell,\cdot)\pi(\ell) \br.
\]
By Lemma \ref{LSymbolEstimates} the sum $\sum_{\ell\in L\setminus\{0\}}$
is $O(\gl^{-\infty})$ as $\gl\to\infty$ and the remaining summand
for $\ell=0$ equals indeed $|c\gt+d|\cdot \vfB\bl f^\vee(0,\gl)\br $.
\smallskip

 Recall that $\tilde\ga_x(\pi(\ell_1\go_1+\ell_2\go_2)) = e^{\tpii\frac{\ell_1
x_1+\ell_2 x_2 }{\rk\cE(g,\gt)}} \pi(\ell_1\go_1+\ell_2\go_2)$.
Thus
\begin{align*}
  \Op(f) \pi(\ell_1\go_1+\ell_2\go_2)
     & = \int_{\R^2} f^\vee(x) 
         e^{\tpii\frac{\ell_1 x_1+\ell_2 x_2 }{\rk\cE(g,\gt)}} 
           \pi(\ell_1\go_1+\ell_2\go_2) dx \\
   & = f (-\frac {2\pi}{\rk\cE(g,\gt)}\ell_1, -\frac{2\pi}{\rk\cE(g,\gt)}\ell_2)
           \pi(\ell_1\go_1+\ell_2\go_2) .
\end{align*}
Applying the Poisson summation formula we find
\begin{align*}
   \Tr\bl\Op(f(\cdot,\gl))\br
      & = \sum_{\ell\in\Z^2} f (\frac {2\pi}{\rk\cE(g,\gt)}\ell_1,
\frac{2\pi}{\rk\cE(g,\gt)}\ell_2) 
       = \frac{\rk\cE(g,\gt)^2}{4\pi^2}\sum_{\gamma\in \bl\frac{2\pi}{\rk}\Z^2\br^\wedge}
                  \hat f(\gamma,\gl)\\
     & = \rk\cE(g,\gt)^2 \int_{\R^2} f(\xi) d\xi + O(\gl^{-\infty}).
\end{align*}
As above, the sum $\sum_{\gamma\not=0}$ is $O(\gl^{-\infty})$ and
the summand corresponding to $\gamma=0$ gives the claimed formula.
\end{proof}


\begin{theorem}\label{TEffHeatExp} Let $k^2=e^h\in\Binf$ be self-adjoint
and positive definite and let $a_0\in\Binf$. Furthermore, let 
\[
   P = \Op\bl k^2 |\eta|^2 + \eps_1 (\dtau k^2) \etabar + \eps_2
  (\dtau^*k^2) \eta + a_0\br, \quad \eta=\xi_1+\taubar\xi_2,
\]
be the effective realization of the differential multiplier
\Eqref{EqDiffMulP1} w.r.t. the representation $\spi$.
Then for $a\in\Binf$ we have an asymptotic expansion
\[
  \Tr\bl a e^{-t P} \br \sim_{t \searrow 0} 
       \sum_{j=0}^\infty a_{2j}(a,P,\spi)\, t^{j-1},
\]
where 
\[
a_{2j}(a,P,\spi) = 
   N(\spi)\cdot a_{2j}(a,P) =  N(\spi)\cdot \vfB\bl a A_{2j}(P)\br
\]
with $A_{2j}(P)\in \Binf$. Explicitly,
\begin{equation}\label{EqA0}
    a_0(a,P,\spi) = \frac{N(\spi)}{4\pi |\Im\tau|} \vfB\bl a k^{-2}\br,
\end{equation}
and $a_2(a,P)$ is given in \Eqref{EqbInt}.
\end{theorem}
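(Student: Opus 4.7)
The strategy is to transfer the resolvent expansion of Theorem \ref{TResExp} to the Hilbert space $\cH$ via the contour integral representation of the heat semigroup \Eqref{EqContourIntHeat}, and then convert the resulting operator trace into a symbol integral using the effective trace formula Theorem \ref{TEffTrace}. Since $\Op$ is a $*$-homomorphism from the parameter-dependent pseudodifferential multiplier algebra into operators on $\cH$, the parametrix of $(\gl-P)\ii$ built in Theorem \ref{TResExp} descends to a parametrix of $(\gl-P)\ii$ on $\cH$ modulo smoothing. Truncating the symbol expansion $b\sim b_{-2}+b_{-3}+b_{-4}+\cdots$ at a sufficiently high order and applying \Eqref{EqContourIntHeat}, the standard argument (see \cite[Sec.~1.8]{Gil:ITH}) gives
\begin{equation*}
   \Tr\bl a e^{-tP} \br \;\sim\; -\frac{1}{\tpii\, t}\sum_{j\ge 0} \int_C e^{-t\gl}\, \Tr\bl a\, \Op\bl c_{-2-j}(\cdot,\gl)\br\br\, d\gl,
\end{equation*}
where the $c_{-2-j}$ are the homogeneous components of the symbol of the parametrix of $(\gl-P)^{-2}$ arising from the polyhomogeneous expansion of Theorem \ref{TResExp}.

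I would next invoke Theorem \ref{TEffTrace} to evaluate each $\cH$-trace on the right as $N(\spi)\cdot \int_{\R^2} \vfB\bl a\, c_{-2-j}(\xi,\gl)\br \dxi$ up to $O(\gl^{-\infty})$ remainders, which produce only $O(t^\infty)$ contributions after contour integration. The scaling $\xi = \gl^{1/2}\tilde\xi$ shows that the $j$-th homogeneous piece contributes a $t^{j/2-1}$ term to the expansion. The half-integer powers vanish because the odd-order components of $b$ (and hence of the squared parametrix) are odd polynomials in $(\eta,\etabar)$ and integrate to zero by \Eqref{EqResExpIntegral2}; this is the same parity argument that permitted the use of the untwisted product formula in the proof of Theorem \ref{TResExp}. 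Consequently the asymptotic expansion exists, is indexed by even integers $2j$, and satisfies $a_{2j}(a,P,\spi)=N(\spi)\cdot a_{2j}(a,P)$ with $a_{2j}(a,P)$ the coefficients of Theorem \ref{TbInt}.

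Finally, the explicit formulas are read off from Theorem \ref{TbInt}. For the constant term only $b_{-2}=(k^2|\eta|^2-\gl)\ii$ contributes, and the computation displayed just after \Eqref{EqContourIntHeat} in the proof of Theorem \ref{TbInt} yields $a_0(a,P)=\frac{1}{4\pi|\Im\tau|}\,\vfB(ak^{-2})$; multiplication by $N(\spi)$ gives \Eqref{EqA0}. For $a_2$, Theorem \ref{TbInt} already provides \Eqref{EqbInt} via the Rearrangement Lemma of \cite[Lemma 6.2]{ConMos2011}, which simultaneously furnishes the required densities $A_{2j}(P)\in\Binf$ representing $a_{2j}(a,P)=\vfB(a A_{2j}(P))$. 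The main technical obstacle is to show that truncated parametrices produce trace-norm-controlled remainders in the effective realization; this is precisely the content of Theorem \ref{TEffTrace} combined with the symbol estimates of Lemma \ref{LSymbolEstimates}, which together show that $\Op$ maps symbols of sufficiently negative order to trace-class operators with norm controlled by their symbol seminorms.
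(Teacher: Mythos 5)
Your proposal is correct and follows essentially the same route as the paper: the paper's own proof is a one-line citation of Theorem \ref{TbInt} and Theorem \ref{TEffTrace} together with the contour-integral argument at the beginning of Section \ref{b-4}, and your write-up simply fills in exactly those steps (parametrix from Theorem \ref{TResExp}, contour integral \Eqref{EqContourIntHeat}, effective trace formula with $O(\gl^{-\infty})$ remainders, vanishing of odd-order contributions by parity). The only cosmetic difference is that you work with the squared resolvent for all coefficients, whereas the paper uses $(\gl-P)^{-1}$ for $j>0$ and the squared form only for $a_0$; this changes nothing of substance.
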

\begin{proof} 
\MLrevision\mpar{calculation of $a_0$ moved to beginning of 
Section \ref{b-4}}
This follows immediately from Theorem \ref{TbInt} and \ref{TEffTrace},
\cf also the beginning of Section \ref{b-4}.
\end{proof}

\begin{example}\label{ExCheck} We check the normalization constants by
calculating explicit examples. Let $P$ be the effective realization of the
multiplier $\udtau^*\udtau$ with symbol 
$|\eta|^2 - \frac 12 c_\tau$, \cf Section \ref{SSSDiffOp2}.

1. Let $\spi=\tilde\pi=\pi_0$. Assume $\Im\tau>0$.
Then, \cf Prop. \ref{harmonic-osc} and \Eqref{Eq:1209064},
$P$ is a direct sum of $|c|$ copies of the operator 
$D^*D$ with
\[
  D = \frac{\pl}{\pl t} + \gl t, D^* = -\frac{\pl}{\pl t} + \ovl{\gl} t,
   \quad \gl = \tpii \mu \taubar,
\]
acting on $L^2(\R$). We have $c_\tau = [D,D^*] = 2 \Re\gl = 4\pi\mu\Im\tau$.

\paragraph*{$\mu>0$} Then $\spec D^* D = 4\pi \mu\Im \tau \cdot \Z_+$
and 
\begin{equation}\label{EqCheck1}
   \Tr ( e^{-t P})  = \frac{|c|}{1-e^{-4\pi\mu \Im\tau \, t}}
    = \frac{|c\gt+d|}{4\pi \Im \tau} t\ii + \frac{|c|}2+ O(t),\text{ as } t\to
0.
\end{equation}

\paragraph*{$\mu<0$} Then
$\spec D^* D = 4\pi |\mu|\Im\tau \cdot \Z_+\setminus\{0\}$ and
\begin{equation}\label{EqCheck2}
   \Tr ( e^{-t P})  = \frac{|c|}{e^{4\pi|\mu| \Im\tau\, t}-1}
    = \frac{|c\gt+d|}{4\pi \Im \tau} t\ii - \frac{|c|}2+ O(t),\text{ as } t\to
0.
\end{equation}

2. Let $\spi=\tilde\ga$. Then $\spec P$ consists of 
\[
   \frac {4\pi^2}{\rk\cE(g,\gt)^2} \bl   k_1^2 + |\tau|^2 k_2^2 + 2\Re\tau k_1
k_2\br,
  \quad k_1, k_2\in\Z.
\]
Hence by the Poisson summation formula (resp. transformation formula for
the $\gt$--function) one finds
\begin{align}
  \Tr(e^{-tP} ) & = \sum_{k\in\Z^2} e^{-4\pi^2/\rk^2 (k_1^2+|\tau|^2 k_2^2+2\Re
\tau_1\tau_2) t}\nonumber\\
    & = \frac{|\rk\cE(g,\gt)|^2}{4\pi |\Im\tau| t } 
       + O(t^\infty),\text{ as } t\to 0.\label{EqCheck3}
\end{align} 

\Eqref{EqCheck1}, \eqref{EqCheck2}, \eqref{EqCheck3} are consistent
with the values of $N(\spi)$ given in Theorem \ref{TEffTrace} and
with \Eqref{EqA0}. Furthermore, by \Eqref{EqbIntCor} and Theorem
\ref{TEffHeatExp} the constant term in the heat expansion
of $\Tr(e^{-tP})$ equals indeed
\[
   N(\spi) a_2(a,P) = \frac{N(\spi)}{4\pi\Im\tau} \vfB(- \frac12
(-c_\tau)) = \frac 12  N(\spi) \mu = \frac{|c\gt+d|}{2 (\gt+\frac dc)}=
      \frac{|c|}2 \sgn (\mu),
\]
and this is consistent with 
\Eqref{EqCheck1}, \eqref{EqCheck2}.

\end{example}

\appendix

\section{Heisenberg equivalence bimodules and the trace formula}\label{AppA}
\setcounter{subsubsection}{0}

For the convenience of the reader and to fix some notation
we briefly summarize here the main facts about Rieffel's general
construction~\cite[Sec.~2,3]{Rie1988} of equivalence bimodules
and how it specializes to our setting. We slightly modify
his Heisenberg cocycle since for $\R^n$ we prefer to have 
a skew bicharacter instead of the usual one, \cf Example 
\ref{SAppExample} below.

\subsubsection{Notation} In the sequel all groups will
be locally compact abelian (lca).  In all our examples, 
groups will be of the form $\R^n\times F$
with a finite abelian group. Therefore, we will confine 
ourselves to \emph{elementary} locally compact
abelian groups, that is groups of the form $\R^n\times T^m\times
\Z^k\times F$. For such groups, the Schwartz space $\sS(G)$ and
the space of smooth functions on $G$ can be
defined as usual. There will be no loss of generality if the
reader assumes that $G=\R^n\times F$.

Unless otherwise said, the Haar measure on discrete 
groups will be the counting measure, the Haar measure on 
compact groups will usually be normalized to $1$. 
For a finite group $F$ (compact and discrete)
the Haar measure will always be specified. 

For a lca group $G$ we denote
by $G^\wedge$ its Pontryagin dual, and by 
$\inn{\cdot,\cdot}:G\times G^\wedge\to T=\bigsetdef{z\in\C}{|z|=1}=\R/2\pi\Z$
the bicharacter implementing the duality between $G$ and
$G^\wedge$.

\subsection{Poisson summation for discrete cocompact subgroups}

Let $\Gamma\subset G$ be a discrete cocompact subgroup.
$\vol_G(F_\Gamma)=\vol_G(G/\Gamma)$ denotes the $G$--volume of a measurable
fundamental domain for the action of $\Gamma$ on $G$.  With the Haar
measure on the compact quotient normalized to $1$ we have for $f\in\sS(G)$
\begin{equation}\label{EqApp1}
  \int_G f(x) dx = \vol_G(F_\Gamma)\cdot\int_{G/\Gamma} \sum_{\grg\in\gG}
  f(x+\grg) \,d\dot x.
\end{equation}
Let $\Gamma^\perp\subset G^\wedge$ be the discrete group consisting 
of those $y\in G^\wedge$ with $\inn{\grg,y}=1$ for all $\grg\in\Gamma$.
Clearly, $\Gamma^\perp$ is naturally isomorphic to
$(G/\Gamma)^\wedge$. Then for $f\in\sS(G)$ one has the
\emph{Poisson summation formula}
\begin{equation}\label{EqPoisson}
  \sum_{\grg\in \Gamma} f(\grg) = \frac1{\vol_G(F_\Gamma)} \cdot
     \sum_{\ga\in\Gamma^\wedge} \hat f(\ga).
\end{equation}

\subsection{Covariance algebras twisted by a cocycle} \label{SSApp2}

Let $(\sA,G,\ga)$ be a $C^*$--dynamical system. By 
$\sA^\infty$ we denote as usual the smooth subalgebra of $\sA$
w.r.t.~the action $\ga$. Let $\go:G\times G \to T$ be a 
smooth \emph{cocycle} (aka multiplier \cite{Kle1962}). That is, $\go$ is a 
smooth map satisfying
\begin{align}
  \go(x,0_G) & = \go(0_G,x) = 1,\\
  \go(x,y) \go(x+y,z) & = \go(y,z) \go(x,y+z).
\end{align}
Applying this with $y=-x, z=x$ we see $\go(x,-x)=\go(-x,x)$.

Define a product and involution on $\sS(G,\sA^\infty)$ as
follows:
\begin{align}
     f^*(x)     &= \ovl{\go(x,-x)}\ga_x\bl f(-x)^*\br, \\
      (f*g)(x)  &= \int_{G} f(y) \ga_y\bigl( g(x-y)\bigr) \go(y,x-y) dy.
\end{align}
With this product and involution $\sS(G,\sA^\infty)$ becomes
a $*$--algebra. A canonical representation of this $*$--algebra
is obtained as follows:
$\sS(G,\sA^\infty)$ is a pre-$C^*$--module with $\Ainf$--valued inner product
\begin{equation}
      \inn{f,g}=\int_{G} f(x)^* g(x)dx.    
\end{equation}
Put 
\begin{align} \label{EqAppMultiplier1} 
  (a \, f)(t) &= \ga_{-t} (a) f(t)\,, \qquad a \in \Ainf ; \\
  (\uU_y f)(t) &=  \go(y,t-y) f(t-y).\label{EqAppMultiplier2}
\end{align}
Then $\uU_y, y\in G$, is a projective family of unitaries which implements the
group of automorphisms $\ga_y, y\in G$:
\begin{align}
  \uU_x^* & = \ovl{\go(x,-x)}\uU_{-x}, \quad \uU_x\, \uU_y= \go(x,y) \uU_{x+y}, \quad x,y\in G,\\
     \uU_x a \uU_{x}^*&= \ga_x(a), \quad a\in\sA^\infty.
\end{align}
By associating to $f\in\sS(G,\sA^\infty)$ the multiplier $M_f=\int_{G} f(x) \uU_x dx$
we obtain the \emph{left regular representation} of the $\go$--twisted
convolution algebra $\sS(G,\sA^\infty)$ on the Hilbert $C^*$--module
$L^2(G,\sA)$. The definition of \Eqref{EqAppMultiplier2} is
deliberately chosen such that $\uU_y\circ M_f = M_{\uU_y f}$. This
differs from the definition in \cite{Kle1962}. 

\subsubsection{Dual Trace}\label{SSSAppDualTrace} 
If $\psi$ is an $\ga$--invariant (finite) trace on $\sA$ then the
\emph{dual trace} $\psihat$ on $\sS(G,\sA^\infty)$ is given by
\begin{equation}\label{EqAppDualTrace}
 \psihat(f)= \psi\bl f(0)\br=\int_{G^\wedge} \psi\bl\hat{f}(\xi)\br
 d\xi.    
\end{equation}
In particular, on the $\go$--twisted convolution algebra $\sS(G)$
(that is $\sA=\C$ with trivial $G$--action) we have the trace
\begin{equation}\label{EqAppTrivDualTrace}
  \psihat(f) = f(0) = \int_{G^\wedge} \hat f(\xi) \, d\xi.
\end{equation}
In fact, the function $f$ can be expressed as
\begin{equation}\label{EqAppfTrace}
  \psihat(U_x^* f) = (U_x^* f)(0) = f(x).
\end{equation}

\subsection{Rieffel's construction of Heisenberg module}

Let $M$ be a lca group, $G:=M\times M^\wedge$, 
$\inn{\cdot,\cdot}:G\to T$ the duality pairing. We choose the
Haar measure on $G$ to be $\mu_G:=\mu_M\otimes \mu_{M^\wedge}$ with a Haar measure
$\mu_M$ on $M$ and its corresponding Plancherel measure $\mu_{M^\wedge}$ on
$M^\wedge$. This Haar measure on $G$ is self-dual w.r.t.~the 
bicharacter $\rho(\sx,\sy):=\inn{x_1,y_2}\ovl{\inn{y_1,x_2}},
\sx, \sy\in G$. Furthermore, the Fourier transform which sends
$f\in \sS(G)$ to $\hat f(\xi):=\int_G \rho(\xi,x) f(x) d\mu_G(x)$
is self-inverse.

We prefer to have a skew symmetric bicharacter on the $\R^n$ factor
of $G$. To allow for some flexibility we thus
define the Heisenberg representation of $G$ on $L^2(M)$
as follows. Fix a bicharacter $\gl:M\times M^\wedge\to T$ and put
\begin{equation}\label{EqApp13}
  (\pi(\sy)u)(t):= \gl(y)\inn{t,y_2} u(t+y_1),
         \qquad  \sy = (y_1, y_2)  \in G, \, t \in M .
\end{equation}
One has
\begin{equation}\label{EqApp14}
  \pi(\sx)\pi(\sy)=e(\sx,\sy) \pi(\sx+\sy), \qquad  \sx, \sy \in G ,
\end{equation}
where $e(\sx,\sy)=\inn{x_1,y_2}\ovl{\gl(x_1,y_2) \gl(y_1,x_2)}$ is
a cocycle satisfying
\begin{equation}
   e(\sx,\sy)\ovl{e(\sy,\sx)} = 
    \rho(\sx,\sy) = \inn{x_1,y_2}\ovl{\inn{y_1,x_2}},
\qquad \sx, \sy\in G.
\end{equation}
Note that $\rho$ is independent of the choice of $\gl$ and equals
therefore the bicharacter induced by the canonical Heisenberg
cocycle $\gb(\sx,\sy)=\inn{x_1,y_2}$. Furthermore,
$\pi(\sx)\pi(\sy)=\rho(\sx,\sy)\pi(\sy)\pi(\sx)$.

The representation $\pi$ integrates
to a representation of the $e$--twisted 
convolution algebra $\sS(G)$ on $L^2(M)$,
\begin{align} \label{EqEffRep}
\pi(f):=\int_G f(\sy) \pi(\sy) d\sy , \qquad f \in \sS(G).
\end{align}

\newcommand{\op}{\textup{op}}
\subsection{The algebras associated to a lattice} 
\label{SSAppLattice}
Let $L\subset G$ be a lattice, that is a discrete cocompact subgroup and let
$L^\perp\subset G$ be the dual lattice with respect to the skew bicharacter
$\rho$. Let $\sB$ be the $C^*$--algebra generated by $\pi(l), l\in L$ and let
$\sA$ be the $C^*$--algebra generated by $\pi(l')^{\op}, l'\in L^\perp$.

$x\mapsto \pi(x)\cdot \pi(x)^*$ defines a $G$--action $\ga_\cdot$ on $\sB$,
which is the integrated form of 
\[
  \pi(x)\pi(\ell)\pi(x)^*=\rho(x,\ell) \pi(\ell).
\]
$\Binf$ consists of the smooth vectors w.r.t.~this action.
Elements of $\Binf$ are of the form $\sum_{\ell\in L} f(\ell)\pi(\ell)$
with $f\in\sS(L)$. Thus $\Binf$ is nothing but the $e$--twisted
convolution algebra $\sS(L)$ with the cocycle $e$ restricted to $L$.
Since $L$ is discrete, the algebra $\Binf$ is unital with unit
$\delta_0(\ell)=\delta_{\ell 0}$. The dual trace $\vfB(f)=f(0)$ is a
normalized trace on $\sB$.

Needless to say, the same remarks apply to $\sA$.
%

\subsubsection{The 
integrated action of $\sS(G,\Binf)$ on $L^2(M)$}

We now apply the construction of Sec.~\ref{SSApp2} to the
dynamical system $(\sB,G,\ga)$. The representation
\Eqref{EqEffRep} extends to a representation of the
$e$--twisted convolution algebra $\sS(G,\Binf)$ on
the Heisenberg module $L^2(M)$. Namely, for 
$f\in\sS(G,\Binf)=\sS(G\times L)$, we put, by slight
abuse of notation:
\begin{equation}
  \begin{split}
     \label{EqAppProductAction} 
    \pi(f) & := \int_G \pi(f(x,\cdot)) \pi(x) dx 
      = \int_G \sum_{\ell\in L} f(x,\ell) \pi(\ell)\pi(x) dx \\
     & = \int_G \sum_{\ell\in L} f(x,\ell) e(\ell,x) \pi(\ell+x) dx 
       = \sum_{\ell\in L } \int_G f(x-\ell,\ell) e(\ell,x-\ell) \pi(x) dx \\
     & = \int_G \Bl \sum_{\ell\in L} f(x-\ell,\ell) e(\ell,x-\ell) \Br \pi(x) dx 
      =: \pi \bl \Phi_L(f) \br,
  \end{split}
\end{equation}
with
\begin{equation}\label{EqAppContraction} 
   \Phi_L(f)(x) =  \sum_{\ell\in L} f(x-\ell,\ell) e(\ell,x-\ell).
\end{equation}
It is not hard to see that $\Phi_L$ maps $\sS(G\times L)$ continuously
to $\sS(G)$, hence the action of $\pi(f)$ is in fact given
by the action of the \emph{scalar} Schwartz function $\Phi_L(f)$.

Thus we have the following commutative diagram of maps
\[
\xymatrix{ \sS(G\times L) \ar[dr]^{\pi}\ar[d]_{\Phi_L} & \\
       \sS(G) \ar[r] & \sL(L^2(M)).}
\]

\subsubsection{The trace formula}

\begin{theorem}\label{TAppTraceFormula}
Let $G=M\times M^\wedge$ be an elementary group,
$L\subset G$ a discrete cocompact subgroup and let
$\pi$ be the Heisenberg representation as outlined above. 

\paragraph*{1.} For $f\in\sS(G,\sB^\infty)\simeq \sS(G\times L)$
the action $\pi(f)$ on $L^2(M)$ is an integral operator with
Schwartz kernel $k\in \sS(M\times M)$. Concretely, 
with $g=\Phi_L(f)\in\sS(G)$ the Schwartz kernel is given by
\begin{equation}\label{EqApp20}
     k(t,s) = \bl g(s-t,\cdot) \gl(s-t,\cdot)\br^\vee(t),
       \qquad t,s \in M.
\end{equation}
Furthermore, we have the trace formula
\begin{equation}\label{EqAppTraceFormula}
  \Tr\bl \pi(f)\br = \widehat\psi\bl \Phi_L(f) \br
      = \sum_{\ell\in L} f(\ell,-\ell) e (-\ell,\ell) 
      = \sum_{\ell\in L} \psi^\sB\bl f(\ell,\cdot) \pi(\ell) \br.
\end{equation}

\paragraph*{2.} For any integral operator $K$ on $L^2(M)$
with kernel $k\in \sS(M\times M)$ the function
\begin{equation}\label{EqApp22}
  g(\sx) = \Tr\bl \pi(\sx)^* K \br = 
    \ovl{\gl(x_1,x_2)} \int_M k(t,x_1+t) \ovl{\inn{t,x_2}} dt
\end{equation}
is a Schwartz function such that $K=\pi(g)$.
\end{theorem}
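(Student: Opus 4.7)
The plan is to reduce Part 1 to a computation for scalar Schwartz functions on $G$, exploiting the identity $\pi(f) = \pi(\Phi_L(f))$ already exhibited in equation \eqref{EqAppProductAction}. So the first step is to take an arbitrary $g \in \sS(G)$ and work out the kernel of $\pi(g)$ acting on $L^2(M)$. Unfolding the definitions,
\[
(\pi(g)u)(t) = \int_G g(y_1,y_2)\,\gl(y_1,y_2)\inn{t,y_2}\,u(t+y_1)\,d\sy ,
\]
and the substitution $s = t+y_1$ turns this into an integral operator whose kernel
\[
k(t,s) = \int_{M^\wedge} g(s-t,y_2)\,\gl(s-t,y_2)\,\inn{t,y_2}\,dy_2
\]
is precisely the inverse Fourier transform in the second variable of $g(s-t,\cdot)\gl(s-t,\cdot)$ at $t$, which is \eqref{EqApp20}. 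That $k \in \sS(M \times M)$ is automatic since $g \in \sS(G)$ and $\gl$ is a smooth bicharacter.

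Next I compute the trace. For a Schwartz integral operator it is given by $\Tr = \int_M k(t,t)\,dt$. Setting $s=t$ in the kernel kills the first argument of $g$ (it becomes $0$), and since $\gl(0,\cdot) \equiv 1$ one obtains
\[
\Tr(\pi(g)) \;=\; \int_M \int_{M^\wedge} g(0,y_2)\inn{t,y_2}\,dy_2\,dt .
\]
Fubini together with the Plancherel/Fourier-inversion convention fixed in the paper (self-dual measure on $G = M \times M^\wedge$) collapses the inner $t$-integral to the Dirac mass at $y_2=0$, giving $\Tr(\pi(g)) = g(0_G)$, i.e.\ exactly the dual trace $\widehat\psi(g)$ of the scalar convolution algebra \eqref{EqAppTrivDualTrace}. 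Applying this with $g = \Phi_L(f)$ and reading off \eqref{EqAppContraction} at $x=0$,
\[
\widehat\psi\bl\Phi_L(f)\br \;=\; \sum_{\ell \in L} f(-\ell,\ell)\,e(\ell,-\ell)
\;=\; \sum_{\ell \in L} f(\ell,-\ell)\,e(-\ell,\ell) ,
\]
after relabeling $\ell \mapsto -\ell$. The final rewriting as $\sum_\ell \psi^{\sB}\bl f(\ell,\cdot)\pi(\ell)\br$ is then a book-keeping exercise using the product in the twisted convolution algebra $\Binf \simeq \sS(L)$: the element $f(\ell,\cdot)\pi(\ell) = \sum_{\ell'} f(\ell,\ell')\,e(\ell',\ell)\,\pi(\ell+\ell')$ has dual-trace equal to the coefficient of $\pi(0)$, which forces $\ell' = -\ell$ and reproduces the summand $f(\ell,-\ell)e(-\ell,\ell)$.

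For Part 2 the argument is a straightforward inversion of the kernel formula established in Part 1. Given $k \in \sS(M \times M)$, define $g$ by \eqref{EqApp22}; I verify directly that $g \in \sS(G)$ (the map $k \mapsto k(t,t+x_1)$ preserves Schwartz regularity, and the $M^\wedge$-Fourier transform in $t$ maps $\sS(M)$ to $\sS(M^\wedge)$ smoothly in the parameter $x_1$). The identity $K = \pi(g)$ is then Fourier inversion applied to the relation between $k(t,t+x_1)$ and $g(x_1,\cdot)\gl(x_1,\cdot)$ displayed above. To match the formula literally one may instead observe that $g(\sx) = \Tr(\pi(\sx)^* K)$ follows by computing the kernel of $\pi(\sx)^*$ (a modulated delta along the graph $s = t-x_1$) and plugging into $\Tr(\pi(\sx)^* K) = \int k_{\pi(\sx)^*}(t,s)k(s,t)\,ds\,dt$; the substitution $t' = t-x_1$ delivers \eqref{EqApp22}. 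The main technical point requiring care throughout is the bookkeeping of normalizations—self-dual Haar measure on $G$, the vanishing of $\gl$ at $0$, and consistency between the scalar dual trace $\widehat\psi$ and the $\sB$-valued dual trace—but once these are aligned the proof is essentially a single Fourier-inversion argument decorated with the lattice expansion $\Phi_L$.
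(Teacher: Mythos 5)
Your proposal is correct and follows essentially the same route as the paper's own proof: reduce to $\pi(\Phi_L(f))$ via \eqref{EqAppProductAction}, read off the Schwartz kernel by the substitution $s=t+y_1$, compute the trace as $\int_M k(t,t)\,dt$ and evaluate $g$ at $0_G$ by Fourier inversion, then do the coefficient-of-$\pi(0)$ bookkeeping for the last equality and invert the kernel formula for Part 2. The only cosmetic difference is that you phrase the collapse of $\int_M\int_{M^\wedge} g(0,y_2)\inn{t,y_2}\,dy_2\,dt$ via a Dirac mass rather than as $\int_M h^\vee = h(0)$, and the paper cites Mercer's theorem for the trace-class step; neither affects correctness.
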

\begin{proof} 1. We have already seen in \Eqref{EqAppProductAction}
that $\pi(f)=\pi(\Phi_L(f))$ and that $g=\Phi_L(f)$ is in the Schwartz
space $\sS(G)$.  In terms of $g$ the action of $\pi(f)$ on $L^2(M)$ is
indeed given by\MLrevision\mpar{$y_1$ replaced by $x_1$}
 \begin{align*}
(\pi(f) u) (t) & = \int_G g(x_1, x_2) \gl(x_1,x_2) 
                   \inn{t , x_2} u(t+x_1) d\sx \\
       & = \int_M \Bl \int_{M^\wedge}
             g(s-t, x_2) \gl(s-t,x_2) 
                   \inn{t , x_2} \, d x_2 \Br u(s) ds,
\end{align*}
and so its Schwartz kernel is  
 $k(t,s) = \bl g(s-t,\cdot) \gl(s-t,\cdot)\br^\vee(t), \, t,s \in M$.

Clearly, $k\in\sS(M\times M)$. It is well-known that integral operators whose
Schwartz kernels are Schwartz functions are trace class.
Applying Mercer's Theorem \cite[Prop.~3.1.1 p.~102]{Ber1972} for trace class
operators, one obtains 
\begin{align*}
\Tr\bl \pi(f)\br
   & = \int_M  k(t,t) dt 
     = \int_M  \bl g(0,\cdot) \gl(0,\cdot)\br^\vee(t) dt \\
   & = g(0_G) = \widehat\psi\bl \Phi_L(f) \br 
    = \sum_{\ell\in L} f(-\ell,\ell) e (\ell,-\ell).
 \end{align*} 
To see the last equality of \Eqref{EqAppTraceFormula}
one computes
\begin{align*}
   \sum_{\tilde\ell\in L} f(\ell,\tilde\ell) 
                  \pi(\tilde \ell) \pi(\ell)
        = \sum_{\tilde\ell\in L} f(\ell,\tilde\ell)
e(\tilde \ell,\ell) \pi(\tilde\ell+\ell) 
        = \sum_{\tilde\ell\in L} 
                  f(\ell,\tilde\ell-\ell)e(\tilde \ell-\ell,\ell)
                      \pi(\tilde\ell),
\end{align*}
thus $\vfB\bl f(\ell,\cdot)\pi(\ell)\br 
= f(-\ell,\ell) e(-\ell,\ell)$ and 1. ist proved.
       

2. Given $k\in \sS(M\times M)$ we invert \Eqref{EqApp20} and put
\begin{equation}
  g(x_1,x_2) \gl(x_1,x_2) := k(\cdot,x_1+\cdot)^\wedge(x_2)
      =  
         \int_M k(t,x_1+t) \ovl{\inn{t,x_2} }dt;
\end{equation}
certainly $g\in \sS(G)$.
\end{proof}
\begin{remark} The equations \Eqref{EqApp20}, \eqref{EqApp22} provide
a linear isomorphism and its inverse between the $e$--twisted
convolution algebra $\sS(G)$ and the algebra of integral operators
$\sS(M\times M)$ on $L^2(M)$ with product and involution given by
\begin{align*}
     (k_1 * k_2) (x_1,x_2) & = \int_M k_1(x_1,y) k_2(y,x_2) dy, \\
        k^*(x_1,x_2) &= \ovl{k(x_2,x_1)}.
\end{align*}
\end{remark}

As an application we reprove the orthogonality relations for
the Heisenberg representation.
\begin{cor}[Orthogonality Relations] \label{CorAppOrthRel}
Let $K_1, K_2\in\cL(L^2(M))$ be integral operators with
Schwartz kernels $k_1, k_2\in\sS(M\times M)$ of Schwartz class.
Then the Fourier transform of the function
\begin{equation}\label{EqAppOrthRel1}
   F(x):= \ovl{\Tr\bl \pi(x)^* K_1 \br } \Tr\bl \pi(x)^* K_2 \br 
\end{equation}
is given by 
\begin{equation}\label{EqAppOrthRel2}
   F^\wedge(\xi) = \Tr\bl K_1^* \pi(\xi) K_2 \pi(\xi)^*\br.
\end{equation}
In particular
\begin{equation}\label{EqAppOrthRel3}
  \int_G \ovl{\Tr\bl \pi(x)^* K_1 \br } \Tr\bl \pi(x)^* K_2 \br dx
     = \Tr\bl K_1^* K_2\br,
\end{equation}
and for the discrete cocompact subgroup $L$ we have
\begin{equation}\label{EqAppOrthRel4}
\sum_{\ell\in L} 
     \ovl{\Tr\bl \pi(x)^* K_1 \br } \Tr\bl \pi(x)^* K_2 \br 
     = \frac1{\vol_G(F_L)} \sum_{\xi\in L^\perp}
               \Tr\bl K_1^* \pi(\xi) K_2 \pi(\xi)^*\br.
\end{equation}
In particular, for Schwartz functions $f,g,h,k\in\sS(M)$
\begin{equation}\label{EqAppOrthRel5}
\int_G \ovl{\scalarL{f, \pi(x)^* g }} \scalarL{h, \pi(x)^* k} dx
   = \ovl{\scalarL{f,h}} \scalarL{g,k}.
\end{equation}
\end{cor}
\begin{proof} \Eqref{EqApp14} implies $\pi(\xi)^*\pi(y)^*\pi(\xi) =
\rho(\xi,y) \pi(y)^*$. Writing $K_1=\pi(f_1), K_2=\pi(f_2)$ 
Theorem \plref{TAppTraceFormula} and \Eqref{EqAppfTrace} imply
\begin{align*}
  F^\wedge(\xi) & = \int_G \rho(\xi,y) F(y) dy
     = \int_G  \ovl{\Tr\bl \pi(y)^* \pi(f_1)\br} \Tr\bl \pi(y)^*
                       (\pi(\xi) \pi(f_2) \pi(\xi)^*)\br dy \\
    & = \int_G \ovl{f_1(y)} (\uU_\xi f_2 \uU_\xi^*)(y) dy 
      = \bl f_1^**(\uU_\xi f_2 \uU_\xi^*)\br (0)\\
    & = \psihat\bl f_1^**(\uU_\xi f_2 \uU_\xi^* ) \br
      = \Tr\Bl \pi\bl f_1^**(\uU_\xi f_2 \uU_\xi^*)\br\Br 
      = \Tr\bl K_1^* \pi(\xi) K_2 \pi(\xi)^*\br,
\end{align*}
proving \Eqref{EqAppOrthRel2}.
Specializing $\xi=0$ gives \Eqref{EqAppOrthRel3} and 
applying the Poisson summation formula
to $F$ gives \Eqref{EqAppOrthRel4}. Finally,
\Eqref{EqAppOrthRel5}  is obtained from 
\Eqref{EqAppOrthRel3} with
$K_1= \scalarL{f,\cdot} g, K_2=\scalarL{h,\cdot} k $.
\end{proof}

\subsection{Inner product} Put for $f,g\in\sS(M)$
\begin{align*}
     \innl[\sB]{f,g} & = \vol_G(F_L)\cdot \sum_{l\in L}
                          \scalarL{\pi(l) g,f} \pi(l),\\
     \innr[\sA]{f,g}      & = \sum_{\xi\in L^\perp}               
                         \scalarL{f \pi(\xi)^\op, g} \pi(\xi)^\op 
                      = \sum_{\xi\in L^\perp}               
                    \scalarL{\pi(\xi) f , g} \pi(\xi)^\op.
\end{align*}
The orthogonality relations Cor. \ref{CorAppOrthRel}
 then imply  
that for any $f,g,h\in\sS(M)$
\[
    f\cdot \innr[\sA]{g,h} = \innl[\sB]{f,g}\cdot h,
\]
see \cite[Sec.~2]{Rie1988}.  Furthermore, with the normalized traces
$\varphi^\sA,\varphi^\sB$ we have 
\[
   \frac1{\vol_G(F_L)} \cdot \varphi^\sB(\innl[\sB]{g,f})
     = \scalarL{f,g} = \varphi^\sA(\innr[\sA]{f,g}).
\]                        
The constant $\vol_G(F_L)$ comes from the Poisson summation
formula. In our main example it is $\frac1{|c\gt+d|}$.

\subsection{Example}\label{SAppExample}
As an important example we specialize what we have presented so far
in this Appendix to the situation considered in the main body of the paper:
namely let $M=\R\times \Z/c\Z$ with $\mu_M=\nu\otimes \#$, \ie Lebesgue
measure tensored by the counting measure.  The pairing
\begin{equation}\label{EqAppEx1}
  \inn{x_1,\ga_1;x_2,\ga_2}:=
  e^{\tpii\bl x_1 x_2 + \ga_1\ga_2/c\br },
\qquad (x_1,\ga_1), (x_2,\ga_2)\in M,
\end{equation}
identifies $M^\wedge$ with $M$. 
The Plancherel measure is $\mu_{M^\wedge}=\nu\otimes \frac 1{|c|} \#$.
With 
\begin{equation}\label{EqAppEx2}
\gl(x_1,\ga_1;x_2,\ga_2)=\gl(x_1,x_2)=\inn{x_1,x_2/2},
\end{equation}
we have
\begin{align}
e(\sx,\sy) & = 
              e \bl (x_1,\ga_1),(x_2,\ga_2);(y_1,\gb_1),(y_2,\gb_2)\br 
        = e^{\tpii\bl         \frac{x_1y_2-x_2 y_1}2 +
                \frac{\ga_1\gb_2}c\br},\label{EqAppEx3} \\
    \rho(\sx,\sy) & = e(\sx,\sy) \ovl{e(\sy,\sx)}
        =  e^{\tpii\bl        x_1y_2-x_2 y_1 +
          \frac{\ga_1\gb_2-\ga_2\gb_1}c\br},\label{EqAppEx4}
\end{align}
and hence the projective representation $\pi$ of $G$
on $L^2(M)$ is explicitly given by
\begin{equation}
 \begin{split}
  \bl \pi(y_1,\gb_1; y_2,\gb_2) u \br(t,\ga)
       & = \gl(y_1,y_2) \cdot  \inn{(t,\ga),(y_2,\gb_2)} \cdot u(t+y_1,\ga+\gb_1) \\
       & = e^{\tpii\bl \frac{y_1y_2}2 + t y_2 + \frac{\ga\gb_2}c    \br}
             \cdot u(t+y_1,\ga+\gb_1),
\end{split}\label{EqAppEx5}
\end{equation}
and infinitesimally (\cf\Eqref{Eq:1209064})
\begin{align}
    \frac{\pl}{\pl y_1}\big|_{y=0} (\pi(\sy)u)(t,\ga) 
              &= \frac{\pl}{\pl t} u(t,\ga), \label{EqAppEx6} \\ 
    \frac{\pl}{\pl y_2}\big|_{y=0} (\pi(\sy)u)(t,\ga) 
              &= \tpii\cdot t \cdot u(t,\ga).\label{EqAppEx7}
\end{align}

Consider the following discrete cocompact subgroups of $G=M\times
M^{(\wedge)}$:
\begin{align}
     L  & = \Z \go_1 \oplus \Z \go_2, & 
     \go_1  &= (0,0;\frac1{c\gt+d},-1), \quad \go_2 = (-\frac 1c,-a;0,0),
              \label{EqAppEx8} \\
L^\perp & = \Z \tilde\go_1 \oplus \Z \tilde \go_2, &
     \tilde \go_1 & = (0,0;1, -d), \quad \tilde\go_2 = (-\gt-\frac dc,-1;0,0).
              \label{EqAppEx9} 
\end{align}
One checks that $L^\perp$ is indeed the group of those $\xi\in G$
such that $\rho(\xi,\ell)=1$ for all $\ell\in L$ and vice versa.
Furthermore, w.r.t. the self-dual Haar measure $\gl\otimes\#\otimes
\gl\otimes \# /|c|$ one has
\begin{equation}
\vol_G(F_L)  = \frac1{|c\gt+d|} = \vol_G(F_{L^\perp})\ii.
              \label{EqAppEx10} 
\end{equation}

Comparing with \Eqref{Eq05234} we see that
$V_1=\pi(\go_1), V_2=\pi(\go_2)$ and furthermore
\[
     V_2^l V_1^k  = e^{\tpii kl \bl \frac1{c(c\gt+d)}+ \gt'
\br}\cdot   \pi(k\go_1+l \go_2) = e^{\tpii k l \gt'} \cdot V_1^k V_2^l.
\]

Similarly, comparing with \Eqref{Eq05251} 
we find $U_1 = \pi (\tilde\go_1)^\op, U_2 = \pi(\tilde \go_2)^\op$
and
\begin{align*}
   U_2^l U_1^k  & = \pi(l\tilde \go_2)^\op \pi(k\tilde\go_1)^\op 
     = \bl \pi (k\tilde\go_1) \pi(l\tilde \go_2)\br^\op \\
    & = e^{\tpii kl \frac{\gt + d/c}2} \cdot  
    \pi(k\tilde \go_1+ l \tilde \go_2)^\op = e^{\tpii kl \gt } 
       \bl \pi(l\tilde \go_2) \pi(k\tilde\go_1)\br^\op \\
    & = e^{\tpii kl \gt } \cdot U_1^k U_2^l.
\end{align*}

The natural action $\ga$ of $G$ on the algebras 
$\Binf, \Ainf$ is given by
\begin{align*}
    \ga_\sx(V_1) & = e^{\tpii \bl \frac{x_1}{c\gt+d}-\frac{\ga_1}c \br}
                  \cdot V_1,\quad
                           \sx=(x_1,\ga_1;x_2,\ga_2),\\
    \ga_\sx(V_2) & = e^{\tpii \frac{x_1+a\ga_2}c}\cdot V_2, \\
    \ga_\sx(U_1) & = e^{\tpii \bl x_1-\frac{\ga_1}c \br}\cdot U_1,\\
    \ga_\sx(U_2) & = e^{\tpii \bl (\gt+\frac dc) x_2+\frac{\ga_2}c\br }\cdot U_2.
\end{align*}

Putting $\tilde \pi(x_1,x_2):= \pi(x_1,0;\mu x_2,0), \mu=\frac1{\gt+\frac dc}$
and $\tilde\ga_{x_1,x_2}(b):= \tilde\pi(x_1,x_2) \cdot b\cdot \tilde\pi(x_1,x_2)^*$
we find, \cf\Eqref{pi-0}, \eqref{Eq:1209064}, indeed
\begin{equation}
  \begin{split}
    (\tilde\pi(x)u)(t,\ga) & = e^{\pi i \mu\bl x_1 x_2 + 2 t x_2\br} u(t+x_1),\\
    \tilde\ga_{x_1,x_2}(V_j) & = e^{\frac{\tpii}{c\gt+d} x_j}\cdot V_j,\quad j= 1,2, \\
    \tilde\ga_{x_1,x_2}(U_j) & = e^{\tpii x_j}\cdot U_j,\quad j= 1,2.
\end{split}
              \label{EqAppEx14} 
\end{equation}

\subsection{The trace formula for ``trivial vector bundles''}
\label{AppB}

For completeness we discuss here the analog of the trace formula
Theorem \ref{TAppTraceFormula} for the trivial $\sB$--vector bundle.

With the notation of Example \ref{SAppExample} let $\tilde\ga$ be the
normalized action of $G$ on $\sB$. The family $(\pi(\ell)\br_{\ell\in L}$
is an orthonormal basis of the Hilbert space $L^2(\sB,\vfB)$
which is the completion of $\sB$ with respect to the scalar
product $\scalarL{a,b}=\vfB(a^*b)$. Furthermore, the action
$\tilde\ga$ is at the same time a unitary representation of $G$
on $L^2(\sB,\vfB)$. Letting $\sB$ act on the left on $L^2(\sB,\vfB)$
the unitary action $\tilde \ga$ on $L^2(\sB,\vfB)$ implements the
action $\tilde\ga$ on $\sB$. Clearly, $\tilde\ga$ integrates in
the usual way to a representation of the (untwisted) convolution
algebra $\sS(G,\Binf)\simeq \sS(G\times L)$. A calculation similar
to \Eqref{EqAppProductAction} gives
\begin{align*}
    \tilde\ga(f)(\pi(l)) & = \int_G f(x) \tilde\ga_x(\pi(l)) dx 
      = \int_G f(x) \rho(x,l) \pi(l) dx \\
     & = \int_G \sum_{k\in L} f(x,k) \rho(x,l) e(k,l) \pi(k+l) dx \\
     & = \sum_{k\in L} \int_G f(x,k-l) \rho(x,l) e(k-l,l)\, dx\; \pi(k).
\end{align*}
This shows that with respect to the basis $\pi(\ell), l\in L$, the operator
$\tilde\ga(f)$ is given by a rapidly decreasing (\ie Schwartz class function)
kernel in $\sS(L\times L)$.
Furthermore, the trace is given by
\begin{align*}
   \Tr(\tilde\ga(f)) & = \sum_{\ell\in L} \int_G f(x,0) \rho(x,l) dx 
       = \sum_{\ell\in L} \vfB(\hat f(\ell)),
\end{align*}
where $\hat f$ denotes the Fourier transform on the self-dual group
$G=M\times M^\wedge$ w.r.t. the bicharacter $\rho$.

\bibliography{mlbib.bib,localbib.bib}
\bibliographystyle{amsalpha-lmp}
\end{document}